\documentclass[11pt]{article}
\usepackage[T1]{fontenc}
\usepackage[margin=1in]{geometry}
\usepackage{amsmath,amssymb,amsthm,graphicx}
\usepackage[numbers,sort&compress]{natbib}
\usepackage{mathtools}
\usepackage{xcolor}
\usepackage{enumerate}
\usepackage{bbm}
\usepackage[hidelinks]{hyperref}
\hypersetup{
    colorlinks=true,
    linkcolor=blue
}

\theoremstyle{definition}
\newtheorem{assumption}{Assumption}[section]
\newtheorem{definition}[assumption]{Definition}
\theoremstyle{plain}
\newtheorem{theorem}[assumption]{Theorem}
\newtheorem{proposition}[assumption]{Proposition}
\newtheorem{lemma}[assumption]{Lemma}
\newtheorem{remark}[assumption]{Remark}
\numberwithin{equation}{section}

\begin{document}

\title{Continuous-Time $q$-Learning with Nonsmooth\\ Value Functions: a BSDE Approach}

\author{
Manuel Asero\thanks{Università degli Studi di Milano, Milan, Italy; manuel.asero@studenti.unimi.it.}
\and
Elena Bandini\thanks{Università di Bologna, Bologna, Italy; elena.bandini7@unibo.it.}
\and
Andrea Cosso\thanks{Università degli Studi di Milano, Milan, Italy; andrea.cosso@unimi.it.}
\and
Laura Perelli\thanks{Università degli Studi di Milano, Milan, Italy; laura.perelli@unimi.it.}
}

\date{}

\maketitle

\begin{abstract}
We study continuous-time $q$-learning for entropy-regularized diffusion control without assuming classical differentiability of the value function. Rewards and admissible policies need only be Borel measurable, which allows discontinuous rewards. The diffusion coefficient $\sigma$ is action independent and locally uniformly elliptic, and the drift is $b_0+\sigma\psi$, with $b_0$ action independent and $\psi$ bounded. In this setting the $q$-function involves the value only through its gradient. For each admissible policy, we represent its value by a backward stochastic differential equation (BSDE) and prove that this value is continuous before the horizon, although the data are only measurable. Replacing the gradient with a measurable representation of the martingale integrand, we define the $q$-function up to Lebesgue-null sets and recover the classical definition when the value function is smooth. We then prove policy improvement, when the Gibbs update is admissible, and martingale characterizations of the value and the $q$-function from on-policy and off-policy observations. Here the candidate value need only be continuous before the horizon, and actions are sampled on a deterministic finite grid and held constant between sampling times. Under an integrability condition on the reward, we show that a second BSDE gives an upper bound for the optimal value, attained when its Gibbs policy is admissible, and we characterize its solution in the same way.
\end{abstract}

\noindent\textbf{Keywords.} Continuous-time reinforcement learning; $q$-learning; entropy regularization; backward stochastic differential equations.

\medskip

\noindent\textbf{MSC 2020.} 93E35, 68T05, 60H10, 93E20.

\section{Introduction}
\label{section:introduction}

Continuous-time reinforcement learning seeks to improve decisions from observations of an evolving stochastic system, without discretizing time beforehand~\cite{Doya2000}. For diffusion models, entropy regularization, a device familiar from discrete-time reinforcement learning~\cite{ZiebartEtAl2008,HaarnojaEtAl2018}, allows exploration to be incorporated into stochastic control by replacing a single action with a probability distribution over actions. Wang, Zariphopoulou, and Zhou~\cite{WangZariphopoulouZhou2020} developed this formulation and its associated exploratory dynamics, and Tang, Zhang, and Zhou~\cite{TangZhangZhou2022} analyzed the resulting Hamilton--Jacobi--Bellman equation. Jia and Zhou~\cite{JiaZhou2022} subsequently characterized policy values through martingale conditions, and derived policy gradients within the same framework~\cite{JiaZhou2022b}.

\medskip\noindent A central object of discrete-time reinforcement learning is the $Q$-function, the expected return obtained by taking a given action and following a given policy thereafter, which underlies $Q$-learning and its descendants~\cite{WatkinsDayan1992,SuttonBarto2018}. It does not survive the passage to continuous time: an action held over a vanishing time interval has a vanishing effect on the return, so that the $Q$-function collapses to the value function and no longer discriminates among actions~\cite{Baird1994,TallecBlierOllivier2019}. The continuous-time $q$-function of Jia and Zhou~\cite{JiaZhou2023} instead describes the first-order effect of an action over a short time interval and provides a basis for policy improvement and learning from observed trajectories: the Gibbs policy that it generates improves on the given one, and martingale conditions characterize it, jointly with the value function, from data collected either under the policy itself or under a different one. This framework has since been extended to mean-field control~\cite{WeiYu2025}, jump-diffusions~\cite{GaoLiZhou2026,BoHuangYuZhang2026}, and risk-sensitive criteria~\cite{Jia2026}, and the convergence of the associated algorithms has been quantified in~\cite{TangZhou2024}.

\medskip\noindent The classical definition of the $q$-function involves the time derivative, spatial gradient, and spatial Hessian of the value function. For a policy $\pi$ with value $J^\pi$, it reads
\[
    q^\pi(t,x,a)=\bigl(\partial_t+\mathcal L^a-\beta\bigr)J^\pi(t,x)+r(t,x,a),
\]
where $r$ is the running reward, $\beta$ is the discount factor, and $\mathcal L^a$ is the generator of the state under the constant action $a$ (see Remark~\ref{remark:classical-q}). Continuity of the rewards alone does not ensure the classical regularity required by this definition. Such regularity is, moreover, hard to guarantee a priori, since it is a joint property of the unknown dynamics, of the rewards, and of the policy under evaluation. Where it is available, propagating it along successive policy updates requires delicate estimates on the associated partial differential equations~\cite{HuangWangZhou2025,TranWangZhang2025}. The requirement also bears on the functions used for learning: the joint martingale characterizations in Theorems~7 and~9 of the Jia--Zhou erratum~\cite{JiaZhou2025Erratum} assume that the candidate value belongs to $C^{1,2}$. In this paper we develop the theory of the $q$-function (its definition, policy improvement, and martingale characterizations) without this requirement, and without assuming that the rewards are continuous.

\medskip\noindent We work on a finite horizon with Borel measurable rewards of polynomial growth. The reference drift and diffusion coefficient are globally Lipschitz in the state, while the controlled drift is locally Lipschitz in the state, uniformly in time and action. The diffusion coefficient is action independent and locally uniformly elliptic. With $b_0$ the reference drift and $\sigma$ the diffusion coefficient, the controlled drift is $b=b_0+\sigma\psi$, where $\psi$ is bounded (see Assumption~\ref{assumption:coefficients}). This is the structure of the weak formulation of stochastic control, in which actions affect the state through a change of probability measure~\cite{Benes1971,DavisVaraiya1973} and the value is described by a backward stochastic differential equation (BSDE)~\cite{ElKarouiPengQuenez1997,HamadeneLepeltier1995}. Policies have strictly positive densities and satisfy moment and entropy conditions (Definition~\ref{definition:policy}), but they need not be regular in time or state. For this reason the exploratory dynamics are understood in the weak sense. We thus remove the regularity assumption on the value function at the price of restrictions on the controlled dynamics. One of them, the action independence of the diffusion coefficient, is shared by much of the convergence theory of policy iteration for controlled diffusions~\cite{KerimkulovSiskaSzpruch2020,HuangWangZhou2025,MaWangZhang2026,MaWangZhangZhou2026}. Within these restrictions, discontinuous rewards are allowed, such as rewards accruing while the state lies in a target region, and so are discontinuous policies.

\medskip\noindent Smoothness can be dispensed with in this setting because the $q$-function is then a first-order object. Indeed, let $\pi$ be a policy whose value $J^\pi$ is of class $C^{1,2}$. Subtracting the linear parabolic equation satisfied by $J^\pi$ from the classical definition of its $q$-function cancels the time derivative and, because the diffusion coefficient is action independent, the Hessian as well. What remains is, for almost every $(t,x)$,
\begin{equation}\label{equation:intro-q}
q^\pi(t,x,a)=\big\langle\sigma(t,x)^\top\partial_x J^\pi(t,x),\,\psi(t,x,a)-\bar\psi_\pi(t,x)\big\rangle+r(t,x,a)-R_\pi(t,x),
\end{equation}
where $\gamma>0$ is the entropy-regularization parameter, and $\bar\psi_\pi$ and $R_\pi$ denote the averages of $\psi$ and of $r-\gamma\log\pi$ with respect to $\pi(\cdot\,|\,t,x)$. The value function thus enters only through $\sigma^\top\partial_xJ^\pi$. For a smooth value, this quantity is the martingale integrand of the BSDE that represents the value~\cite{PardouxPeng1992,ElKarouiPengQuenez1997}. The integrand, unlike the gradient, is available whether or not the value is differentiable.

\medskip\noindent The argument uses the theory of BSDEs developed by Pardoux and Peng~\cite{PardouxPeng1990}. By Girsanov's theorem, each sampled state trajectory solves, under an equivalent probability measure, the reference equation $dX_s=b_0(s,X_s)\,ds+\sigma(s,X_s)\,dB_s$, whatever the policy that generated it (Section~\ref{section:reference-dynamics}). For a fixed policy $\pi$, we represent $J^\pi$ by a BSDE along the reference diffusion $X$ (Proposition~\ref{proposition:bsde-value}). Its martingale integrand is of the form $z_\pi(s,X_s)$ for a Borel function $z_\pi$ (Proposition~\ref{proposition:markovian-integrand}), and we define $q^\pi$ by~\eqref{equation:intro-q} with $z_\pi$ in place of $\sigma^\top\partial_xJ^\pi$. Since the reference diffusion has positive transition densities (Proposition~\ref{proposition:densities}), the function $z_\pi$ is unique up to null sets, and the choice of the representative does not matter along sampled trajectories. If $J^\pi$ is of class $C^{1,2}$, we recover the definition of~\cite{JiaZhou2023} (Remark~\ref{remark:classical-q}). Although the rewards and policies are only measurable, policy values are continuous before the horizon (Proposition~\ref{proposition:continuity}). This strong Feller property, which we derive from Krylov's estimate~\cite{Krylov1980}, allows identities holding almost everywhere to be extended to every time and state.

\medskip\noindent For a fixed policy $\pi$, we prove the following results. Theorem~\ref{theorem:policy-improvement} shows, by the comparison theorem for BSDEs, that the Gibbs update of $\pi$, namely the Gibbs policy generated by $q^\pi$, improves on $\pi$ whenever it is admissible, and that an admissible fixed point of this update is optimal within the policy class. Since admissibility only requires measurability and the moment and entropy bounds of Definition~\ref{definition:policy}, we do not have to check any regularity of the update in time or state. This is essential, because the update depends on the state through the reward and through the martingale integrand, which is available only as a Borel function defined up to null sets. Theorem~\ref{theorem:q-martingale-characterization} characterizes $q^\pi$ by a martingale condition when $J^\pi$ is known. Theorem~\ref{theorem:joint-martingale-characterization} characterizes the pair $(J^\pi,q^\pi)$ by the same condition, together with the terminal condition and a normalization depending on $\pi$. These conditions involve the candidate functions and observed states, actions, and rewards. The candidate value is only required to be continuous before the horizon and of polynomial growth, and the candidate $q$-function to be Borel measurable and integrable along sampled trajectories, so that nonsmooth parametric families are not excluded. In particular, the value $J^\pi$ itself is always an admissible candidate, which need not be true when candidates are required to be of class $C^{1,2}$ (Remark~\ref{remark:example}). Where the classical argument applies It\^o's formula to the candidate value, we recover its semimartingale decomposition from the martingale property itself, by martingale representation, and conclude by uniqueness for the BSDE. Under the stated integrability conditions, a single admissible behavior policy suffices for identification, including when it differs from the policy being evaluated.

\medskip\noindent For optimization, we additionally assume that $r(t,x,a)\leq\widetilde r(t,x)+\chi(a)$, where $\widetilde r$ has polynomial growth and $\exp(\chi/\gamma)$ is integrable over the action space $A$ (Assumption~\ref{assumption:separable-reward}). This holds automatically when $A$ is bounded. The driver
\[
    f_*(t,x,y,z)=\gamma\log\left(\int_A\exp\left(\frac{\langle z,\psi(t,x,a)\rangle+r(t,x,a)}{\gamma}\right)\,da\right)-\beta y
\]
is then Lipschitz in $(y,z)$ and dominates the driver of the BSDE associated with every admissible policy (Proposition~\ref{proposition:optimal-driver}). Hence the initial value $u_*(t,x)$ of the BSDE with driver $f_*$ is an upper bound for $J^\pi(t,x)$ for every admissible policy $\pi$, and the bound is attained when the associated Gibbs density has an admissible representative (Proposition~\ref{proposition:optimal-verification}). Theorem~\ref{theorem:optimal-martingale-characterization} characterizes $u_*$ and the associated function $q_*$ by a martingale condition, the terminal condition, and the normalization $\int_A\exp(q/\gamma)\,da=1$, without reference to any prescribed policy.

\medskip\noindent Throughout the paper, actions are sampled at finitely many deterministic times and held constant between successive draws, as in the Jia--Zhou erratum~\cite{JiaZhou2025Erratum}. This avoids the measurability problem raised by drawing an independent action at every instant~\cite{SzpruchTreetanthiploetZhang2024,BenderThuan2024}. The martingale properties in Theorems~\ref{theorem:q-martingale-characterization}, \ref{theorem:joint-martingale-characterization}, and~\ref{theorem:optimal-martingale-characterization} hold exactly on every sampling grid, and we do not let the mesh tend to zero. The policy value $J^\pi$ is instead defined through the exploratory dynamics, whose relation to discretely sampled policies in the vanishing-mesh limit is studied by Jia, Ouyang, and Zhang~\cite{JiaOuyangZhang2025} for sufficiently regular coefficients and policies; see also~\cite{BenderThuan2026,CarmonaLauriere2025}. On a fixed sampling grid, the expected entropy-regularized reward along a trajectory sampled from $\pi$ need not coincide with $J^\pi$.

\medskip\noindent In Remark~\ref{remark:example} we consider a one-dimensional example with running reward $\mathbbm 1_{[-\delta,\delta]}(x)-a^2/2$. The value and the $q$-function of the uniform policy are explicit. The value has a bounded Lipschitz gradient, but its second spatial derivative jumps at $x=\pm\delta$ for every $t<T$, so that the value is not of class $C^{1,2}$. The $q$-function is nevertheless continuous. In this example the classical definition is not available, while the results of the present paper apply.

\medskip\noindent We now compare our results with the literature on policy iteration for controlled diffusions. Kerimkulov, \v{S}i\v{s}ka, and Szpruch~\cite{KerimkulovSiskaSzpruch2020} introduced BSDE techniques into the analysis of Howard's algorithm, and Possama\"i and Tangpi~\cite{PossamaiTangpi2024} used them to design and analyze a policy iteration scheme for non-Markovian problems in weak formulation. For entropy-regularized problems, the convergence of iterated Gibbs updates was proved by Huang, Wang, and Zhou~\cite{HuangWangZhou2025} and by Tran, Wang, and Zhang~\cite{TranWangZhang2025} through H\"older and Sobolev estimates for the iterated equations, and by Ma, Wang, and Zhang~\cite{MaWangZhang2026} through probabilistic representations of the derivatives of the value. Closest to our setting is the recent work of Ma, Wang, Zhang, and Zhou~\cite{MaWangZhangZhou2026} on unknown drifts and running rewards with a known, action-independent diffusion coefficient. These authors learn the gradient of the optimal value function together with a function that, for such a diffusion coefficient, is essentially the optimal $q$-function. They rely on probabilistic representations along an auxiliary diffusion that involves the diffusion coefficient only, and prove convergence of the resulting algorithms under the assumption that the exploratory Hamilton--Jacobi--Bellman equation has a classical solution with bounded derivatives. All four works on entropy-regularized problems rely on regularity of the coefficients, of the rewards, or of the value function, and address the convergence of iterative schemes. We do not study convergence. We show that the $q$-function can be defined, and that Theorems~\ref{theorem:policy-improvement}, \ref{theorem:q-martingale-characterization}, and~\ref{theorem:joint-martingale-characterization} hold, without regularity assumptions on rewards, policies, or values. Finally, Mou~\cite{Mou2026} studies off-policy learning of value and advantage functions for diffusions observed at discrete times. That paper proposes a fitted $q$-learning algorithm with function approximation and proves oracle inequalities for its estimation error. The analysis relies on the ellipticity of the diffusion, which is also assumed here.

\medskip\noindent The rest of the paper is organized as follows. Section~\ref{section:framework} gives the framework and notation. Section~\ref{section:bsde-and-q} constructs the $q$-function and proves the continuity of policy values, while Section~\ref{section:policy-improvement} proves policy improvement. Sections~\ref{section:policy-evaluation} and~\ref{section:optimal-value-and-q} treat martingale characterizations and optimization, respectively.

\section{Model formulation}
\label{section:framework}

\subsection{Basic notation and stochastic framework}

Throughout the paper, we fix a finite time horizon $T>0$ and positive integers $d$, $n$, and $k$, with $n\geq d$. The state space is $\mathbb{R}^d$, while $A\subseteq\mathbb{R}^k$ denotes a Borel action space with positive Lebesgue measure. Unless otherwise specified, all integrals over $A$ are taken with respect to Lebesgue measure $da$. For vectors $x,y$ in a Euclidean space, we denote by $\langle x,y\rangle$ and $|x|$ the Euclidean inner product and norm, respectively. For a matrix $M$, we write $M^\top$ for its transpose and $|M|^2=\operatorname{tr}(MM^\top)$ for its squared Frobenius norm.

\medskip\noindent Let $(\Omega,\mathcal{F},\mathbb{P})$ be a complete probability space supporting an $n$-dimensional Brownian motion $W=(W_s)_{0\leq s\leq T}$ and a sequence $(U_i)_{i\geq0}$ of independent random variables, uniformly distributed on $[0,1]$ and independent of $W$. We denote by $\mathbb{F}^W=(\mathcal{F}^W_s)_{0\leq s\leq T}$ the augmented natural filtration of $W$. We write $\mathbb{E}$ for expectation under $\mathbb{P}$ and, more generally, $\mathbb{E}^{\mathbb Q}$ for expectation under any probability measure $\mathbb Q$ on $(\Omega,\mathcal F)$. Given $0\leq t\leq T$, a probability measure $\mathbb Q$, a filtration $\mathbb{H}$, $p\geq1$, an integer $N\geq1$ and a subset $E\subseteq \mathbb R^N$, we denote by $\mathcal{S}^p_{t,T}(E;\mathbb Q,\mathbb{H})$ the space of continuous, $\mathbb{H}$-progressively measurable $E$-valued processes $Y$ such that
\[ 
    \mathbb{E}^{\mathbb Q} \bigg[\sup_{t\leq s\leq T}|Y_s|^p\bigg]<+\infty, 
\] 
by $\mathcal M^p_{t,T}(E;\mathbb Q,\mathbb{H})$ the space of $\mathbb{H}$-progressively measurable $E$-valued processes $Z$ such that
\[ 
    \mathbb{E}^{\mathbb Q} \bigg[\int_t^T |Z_s|^p\,ds\bigg]<+\infty,
\]
and by $\mathcal H^p_{t,T}(E;\mathbb Q,\mathbb{H})$ the space of $\mathbb{H}$-progressively measurable $E$-valued processes $Z$ such that
\[ 
    \mathbb{E}^{\mathbb Q} \bigg[\bigg(\int_t^T |Z_s|^2\,ds\bigg)^{p/2}\bigg]<+\infty.
\]
Processes in $\mathcal{S}^p$ are identified up to indistinguishability, whereas processes in $\mathcal M^p$ and $\mathcal H^p$ are identified up to $ds\otimes d\mathbb Q$-almost everywhere equality. For deterministic functions, ``almost everywhere'' refers to Lebesgue measure on the underlying Euclidean domain.

\subsection{Controlled diffusion and rewards} 
For an initial condition $(t,x)\in[0,T]\times\mathbb{R}^d$ and an $A$-valued control process $\alpha=(\alpha_s)_{t\leq s\leq T}$, progressively measurable with respect to a filtration for which $W$ is a Brownian motion, we consider the controlled diffusion 
\begin{equation}\label{equation:controlled-diff} 
\begin{cases}
    dX_s^{t,x,\alpha} = b(s,X_s^{t,x,\alpha},\alpha_s)\,ds + \sigma(s,X_s^{t,x,\alpha})\,dW_s,&s\in[t,T],\\
    X_t^{t,x,\alpha}=x.
\end{cases}
\end{equation} 
The diffusion coefficient is independent of the action, while the drift is of the form
\begin{equation}\label{equation:drift-structure} 
    b(t,x,a) = b_0(t,x) + \sigma(t,x)\psi(t,x,a) 
\end{equation} 
for suitable functions $b_0:[0,T]\times\mathbb R^d\to\mathbb R^d$,
$\sigma:[0,T]\times\mathbb R^d\to\mathbb R^{d\times n}$, and
$\psi:[0,T]\times\mathbb R^d\times A\to\mathbb R^n$.

\medskip\noindent Let $r: [0,T]\times\mathbb{R}^d\times A\to\mathbb{R}$ and $g:\mathbb{R}^d\to\mathbb{R}$ denote, respectively, the running and terminal reward functions Given a discount factor $\beta\geq0$, the discounted reward associated with $\alpha$ is defined, whenever \eqref{equation:controlled-diff} is well posed and the expression below is integrable, by
\begin{equation*}
    \mathcal{J}(t,x,\alpha) \coloneqq \mathbb{E}\left[ \int_t^T e^{-\beta(s-t)} r(s,X_s^{t,x,\alpha},\alpha_s)\,ds + e^{-\beta(T-t)} g(X_T^{t,x,\alpha}) \right].
\end{equation*}
The functional $\mathcal{J}$ is introduced to specify the underlying state--reward model. Our main object will be the entropy-regularized value associated with a randomized feedback policy, introduced below.

\medskip\noindent Throughout the paper, we consider the following standing assumptions.

\begin{assumption}\label{assumption:coefficients}
The functions $b_0$, $\sigma$, $\psi$, $r$, and $g$ are Borel measurable. The functions $\psi$, $b_0(\cdot,0)$,
and $\sigma(\cdot,0)$ are bounded. There exist constants $L,C>0$ and
$m\geq1$, such that
\begin{align}
&|b_0(t,x)-b_0(t,y)|+|\sigma(t,x)-\sigma(t,y)|
 \leq L|x-y|,
\label{equation:coefficient-lipschitz}\\
&|r(t,x,a)|\leq C(1+|x|^m+|a|^m),
\qquad |g(x)|\leq C(1+|x|^m),
\label{equation:reward-growth}
\end{align}
for every $t\in[0,T]$, $a\in A$, 
$x,y\in\mathbb R^d$. Furthermore, for every $R>0$ there exist constants $L_R,\lambda_R>0$ such that
\begin{align}
&|b(t,x,a)-b(t,y,a)|
 \leq L_R|x-y|,\label{equation:coefficient-lipschitz2}\\
&\big\langle \sigma(t,x)\sigma(t,x)^\top v,v \big\rangle
  \geq \lambda_R |v|^2,
  \label{equation:ellipticity}
\end{align}
for every $t\in[0,T]$, $a\in A$, $|x|,|y|\leq R$, $v\in\mathbb R^d$.
\end{assumption}

\noindent The global Lipschitz continuity of $b_0$ and $\sigma$, together with the boundedness of $b_0(\cdot,0)$ and $\sigma(\cdot,0)$, implies that both coefficients have at most linear growth. Since $\psi$ is bounded, \eqref{equation:drift-structure} further yields 
\begin{equation}\label{equation:coefficient-growth} 
    |b_0(t,x)| + |b(t,x,a)| + |\sigma(t,x)| \leq C(1+|x|),
\end{equation} 
uniformly in $(t,a)$, possibly after changing the value of $C$. These conditions place the forward state equation in the classical Markovian diffusion setting used in the BSDE approach of \cite[Section~4]{ElKarouiPengQuenez1997}. In particular, for every control of this type, including the sampled controls considered below, the corresponding state equation admits a unique strong solution with finite moments of every order. Local uniform ellipticity in
\eqref{equation:ellipticity} implies that $\sigma$ has rank $d$, which explains the restriction $n\geq d$. This condition will be
used later in the identification of the $q$-function and in the proof that policy values are continuous.

\begin{remark}\label{remark:indicator-rewards}
Assumption~\ref{assumption:coefficients} requires no regularity of the rewards $r$ and $g$ beyond Borel measurability, whereas both are continuous in \cite[Assumption~1]{JiaZhou2023}. A relevant class of running rewards covered here, but not there, is
\[
    r(t,x,a)=c\,\mathbbm 1_D(x)+r_0(t,x,a),
\]
where $c\neq0$, $D\subset\mathbb R^d$ is a Borel set, and $r_0$ is continuous and satisfies \eqref{equation:reward-growth}. The first term rewards or penalizes (depending on the sign of $c$) the time spent by the state in the target region $D$. For such rewards the value of a policy is, in general, not of class $C^{1,2}$. See Remark~\ref{remark:example} for an example. 
\end{remark}

\subsection{Randomized feedback policies} We next introduce the exploratory randomization of the action. 
\begin{definition} \label{definition:policy} 
An admissible policy is a Borel function  $(t,x,a) \longmapsto \pi(a\,|\, t,x)\in(0,+\infty)$ such that 
\begin{equation*}
    \int_A \pi(a\,|\, t,x)\,da=1, \qquad (t,x)\in[0,T]\times\mathbb{R}^d, 
\end{equation*} 
and satisfying the following conditions. For every $p\geq1$, there exist constants $C_{\pi,p}>0$ and $m_{\pi,p}\geq1$ such that \begin{equation} \label{equation:policy-moments} 
        \int_A |a|^p\pi(a\,|\, t,x)\,da \leq C_{\pi,p} \bigl(1+|x|^{m_{\pi,p}}\bigr),\qquad\text{for all $(t,x)\in[0,T]\times\mathbb{R}^d$. }
    \end{equation} 
    Moreover, there exist $C_\pi>0$ and $m_\pi\geq1$ such that 
    \begin{equation}\label{equation:entropy-integrability} 
        \int_A |\log\pi(a\,|\, t,x)| \pi(a\,|\, t,x)\,da \leq C_\pi(1+|x|^{m_\pi}).
    \end{equation}
\end{definition} 
\noindent We denote by $\Pi$ the collection of admissible policies. Notice that $\Pi$ is nonempty: the density on $A$ proportional to $\exp(-|a|^2)$, independent of $(t,x)$, belongs to $\Pi$. 

\begin{remark}
The bounds \eqref{equation:policy-moments} and \eqref{equation:entropy-integrability} are the moment and entropy conditions of the framework of \cite{JiaZhou2023}. In contrast with \cite[Definition~1]{JiaZhou2023}, no continuity of the map $(t,x)\longmapsto\pi(\cdot\,|\,t,x)$ is required: neither continuity in $L^1(A,da)$, nor Lipschitz continuity in the state with respect to the total variation distance. The analysis developed below only uses that the averaged coefficients introduced in the next subsection are Borel measurable, together with their growth. For the exploratory dynamics introduced below, we only need, and only prove, existence and uniqueness in law; strong solutions are not used. They are available, by the classical Lipschitz theory, when $\pi$ is in addition Lipschitz continuous in the state with respect to the total variation distance, uniformly in time, since the averaged drift is then locally Lipschitz in the state.
\end{remark}

\subsection{Exploratory dynamics and policy evaluation} Following \cite{JiaZhou2023}, a randomized policy is incorporated into the state dynamics by averaging the local characteristics with respect to its action distribution. In our setting only the drift has to be averaged, since the diffusion coefficient does not depend on the action. For $\pi\in\Pi$, define 
\begin{gather*} 
    \bar{\psi}_\pi(t,x) \coloneqq \int_A \psi(t,x,a)\pi(a\,|\, t,x)\,da,\qquad 
    \bar r_\pi(t,x) \coloneqq \int_A r(t,x,a)\pi(a\,|\, t,x)\,da,\\
    \ell_\pi(t,x) \coloneqq \int_A \log\pi(a\,|\, t,x) \pi(a\,|\, t,x)\,da.
\end{gather*} 
Let $\gamma>0$ denote the entropy-regularization parameter. We further set 
\begin{equation*}
    R_\pi(t,x) \coloneqq \bar r_\pi(t,x) - \gamma\ell_\pi(t,x). 
\end{equation*} 
The quantity $-\ell_\pi(t,x)$ is the differential entropy of $\pi(\cdot\,|\,t,x)$. Notice that, under Assumption~\ref{assumption:coefficients} and by the properties of $\pi$, the functions $\bar\psi_\pi$, $\bar r_\pi$, and $\ell_\pi$ are Borel measurable, $\bar\psi_\pi$ is bounded, and $R_\pi$ has polynomial growth.

\noindent The exploratory state process associated with $\pi$ is a solution of \begin{equation} 
\begin{cases}\label{equation:aggregate-state} 
    d\bar X_s^{t,x,\pi} = \Big[ b_0(s,\bar X_s^{t,x,\pi}) + \sigma(s,\bar X_s^{t,x,\pi}) \bar\psi_\pi(s,\bar X_s^{t,x,\pi}) \Big]\,ds+ \sigma(s,\bar X_s^{t,x,\pi})\,dW_s,&s\in[t,T],\\ \bar X_t^{t,x,\pi}=x. 
\end{cases} 
\end{equation} 
Equivalently, its drift is 
\[ 
    \bar b_\pi(t,x)\coloneqq\int_A b(t,x,a) \pi(a\,|\, t,x)\,da = b_0(t,x)+\sigma(t,x)\bar\psi_\pi(t,x).
\] 
Since $\bar\psi_\pi$ is bounded but, in general, only Borel measurable, equation \eqref{equation:aggregate-state} is understood in the weak sense: a solution consists of a filtered probability space supporting an $n$-dimensional Brownian motion, still denoted by $W$, and a continuous adapted process $\bar X^{t,x,\pi}$ satisfying \eqref{equation:aggregate-state}. By Proposition~\ref{proposition:aggregate-value} below, such a solution exists and is unique in law.
The entropy-regularized value associated with $\pi$ is 
\begin{equation}\label{equation:policy-value} 
    J^\pi(t,x) \coloneqq \mathbb{E}\left[ \int_t^T e^{-\beta(s-t)} R_\pi(s,\bar X_s^{t,x,\pi})\,ds + e^{-\beta(T-t)} g(\bar X_T^{t,x,\pi}) \right].
\end{equation} 
Here $\mathbb E$ denotes the expectation on the probability space of an arbitrary weak solution of \eqref{equation:aggregate-state}; by uniqueness in law, $J^\pi(t,x)$ does not depend on this choice.
This is the policy value used throughout the paper.

\begin{proposition} \label{proposition:aggregate-value} Let $\pi\in\Pi$. For every $(t,x)\in[0,T]\times\mathbb R^d$, equation \eqref{equation:aggregate-state} admits a weak solution, which is unique in law. Moreover, for every $p\geq2$, \begin{equation} \label{equation:aggregate-state-moments} 
    \mathbb{E}\bigg[ \sup_{t\leq s\leq T} |\bar X_s^{t,x,\pi}|^p \bigg] \leq C_p(1+|x|^p),
\end{equation} 
where $C_p$ is independent of $\pi$, $t$, and $x$. Consequently, $J^\pi$ is well defined, finite, satisfies $J^\pi(T,x)=g(x)$, and has polynomial growth: there exist constants $C_\pi>0$ and $m_\pi\geq1$ such that 
\[ 
    |J^\pi(t,x)| \leq C_\pi(1+|x|^{m_\pi}). 
\] 
\end{proposition}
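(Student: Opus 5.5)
The plan is to obtain existence and uniqueness in law by Girsanov's theorem applied to the reference diffusion, and then to read off the moment bound and the properties of $J^\pi$.

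\emph{Reference diffusion and existence.} Since $b_0$ and $\sigma$ are globally Lipschitz by \eqref{equation:coefficient-lipschitz}, the reference equation $dX_s=b_0(s,X_s)\,ds+\sigma(s,X_s)\,dW_s$, $X_t=x$, has a unique strong solution $X$ on $(\Omega,\mathcal F,\mathbb P)$. The classical estimates give $\mathbb E[\sup_{t\le s\le T}|X_s|^p]\le C_p(1+|x|^p)$, with $C_p$ depending only on $L$, $T$, $p$ and the bounds on $b_0(\cdot,0)$ and $\sigma(\cdot,0)$. Set $\theta_s\coloneqq\bar\psi_\pi(s,X_s)$. This process is progressively measurable because $\bar\psi_\pi$ is Borel, and $|\theta_s|\le\|\psi\|_\infty$. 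By Novikov's criterion, $\mathcal E_s\coloneqq\exp\bigl(\int_t^s\langle\theta_u,dW_u\rangle-\frac12\int_t^s|\theta_u|^2du\bigr)$ is a true martingale, so $d\mathbb Q\coloneqq\mathcal E_T\,d\mathbb P$ defines a probability measure. Under $\mathbb Q$, the process $\widetilde W_s\coloneqq W_s-\int_t^s\theta_u\,du$ is a Brownian motion, and
\[
dX_s=\bigl[b_0(s,X_s)+\sigma(s,X_s)\bar\psi_\pi(s,X_s)\bigr]ds+\sigma(s,X_s)\,d\widetilde W_s .
\]
Hence $(\Omega,\mathcal F,\mathbb Q,\widetilde W,X)$ is a weak solution of \eqref{equation:aggregate-state}.

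\emph{Uniqueness in law.} Take any weak solution $(\Omega',\mathbb P',W',\bar X)$. Define $\widehat W_s\coloneqq W'_s+\int_t^s\bar\psi_\pi(u,\bar X_u)\,du$ and use the reverse Girsanov density $\exp\bigl(-\int\langle\bar\psi_\pi(u,\bar X_u),dW'_u\rangle-\frac12\int|\bar\psi_\pi(u,\bar X_u)|^2du\bigr)$. This density is again a true martingale by boundedness. Under the new measure $\mathbb P''$, the process $\bar X$ solves the reference equation driven by the Brownian motion $\widehat W$. Pathwise uniqueness and Yamada--Watanabe then give that the joint law of $(\bar X,\widehat W)$ under $\mathbb P''$ is the unique one. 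The density $d\mathbb P'/d\mathbb P''=\exp\bigl(\int\langle\bar\psi_\pi(u,\bar X_u),d\widehat W_u\rangle-\frac12\int|\bar\psi_\pi(u,\bar X_u)|^2du\bigr)$ is a measurable functional of $(\bar X,\widehat W)$. The stochastic integral is realized as a measurable map on path space, a limit of Riemann sums along a subsequence. Therefore the law of $\bar X$ under $\mathbb P'$ is determined and coincides with the law of $X$ under $\mathbb Q$. This is the standard argument (Karatzas--Shreve, Prop.~5.3.10).

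\emph{Moments.} I will work directly on a weak solution rather than transfer bounds through Girsanov. The drift $b_0+\sigma\bar\psi_\pi$ has linear growth with constant independent of $\pi$, since $|\bar\psi_\pi|\le\|\psi\|_\infty$, and $\sigma$ has linear growth. Applying BDG and Gronwall to $\sup_{t\le u\le s}|\bar X_u|^p$ therefore gives \eqref{equation:aggregate-state-moments} with $C_p$ depending only on $p$, $T$, $C$ and $\|\psi\|_\infty$. A localization by stopping times makes the quantities finite before letting the level go to infinity. The alternative route, Hölder's inequality with $\mathbb E[\mathcal E_T^2]\le e^{\|\psi\|_\infty^2T}$, would also give a $\pi$-independent constant.

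\emph{Consequences for $J^\pi$.} $R_\pi$ has polynomial growth: $|\bar r_\pi|\le C(1+|x|^m+C_{\pi,m}(1+|x|^{m_{\pi,m}}))$ by \eqref{equation:reward-growth} and \eqref{equation:policy-moments}, and $|\ell_\pi|\le C_\pi(1+|x|^{m_\pi})$ by \eqref{equation:entropy-integrability}. Also $|g(x)|\le C(1+|x|^m)$. The moment bound with $p$ equal to the largest exponent then shows that the integrand in \eqref{equation:policy-value} is integrable, and that $|J^\pi(t,x)|\le C_\pi(1+|x|^{m_\pi})$ after enlarging constants. Uniqueness in law makes $J^\pi$ independent of the chosen weak solution, because the expectation is a functional of the law of the path. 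Finally, $J^\pi(T,x)=g(x)$ is immediate since $\bar X_T^{T,x,\pi}=x$.

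The main obstacle is the uniqueness-in-law step, specifically the measurability of the Girsanov density as a path functional. I handle it through the joint law of $(\bar X,\widehat W)$ provided by Yamada--Watanabe.
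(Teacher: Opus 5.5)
Your proposal is correct and follows the paper's proof essentially step for step. Existence comes from Girsanov applied to the reference diffusion, and uniqueness in law from reverse Girsanov, pathwise uniqueness and Yamada--Watanabe, with the density written as a functional of $(\bar X,\widehat W)$. The moments come from BDG and Gronwall with a $\pi$-independent linear-growth constant, and the bounds on $J^\pi$ from the polynomial growth of $R_\pi$ and $g$.

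Your remarks on localization and on realizing the stochastic integral as a path functional make explicit what the paper leaves implicit. One point to tighten: for a merely Borel integrand, the path functional is not obtained from Riemann sums of the integrand itself. It comes from approximating the integrand by simple progressive functionals, and the same approximation works for both pairs because they have the same joint law.
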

\begin{proof}
Let $\mathbb Q$ be a probability measure under which $B$ is an $n$-dimensional Brownian motion, and let $X$ be the unique strong solution of
\[
    dX_s=b_0(s,X_s)\,ds+\sigma(s,X_s)\,dB_s,
    \qquad s\in[t,T],
    \qquad X_t=x,
\]
which exists by \eqref{equation:coefficient-lipschitz} and \eqref{equation:coefficient-growth}. Since $\bar\psi_\pi$ is bounded, Novikov's condition is satisfied, and
\[
    \frac{d\widetilde{\mathbb Q}^{\,\pi}}{d\mathbb Q}
    \coloneqq
    \exp\left(
        \int_t^T\langle\bar\psi_\pi(v,X_v),dB_v\rangle
        -\frac12\int_t^T|\bar\psi_\pi(v,X_v)|^2\,dv
    \right)
\]
defines a probability measure equivalent to $\mathbb Q$. By Girsanov's theorem, the process $\widetilde B_s\coloneqq B_s-\int_t^s\bar\psi_\pi(v,X_v)\,dv$ is a Brownian motion under $\widetilde{\mathbb Q}^{\,\pi}$, and
\[
    dX_s
    =
    \Big[
        b_0(s,X_s)+\sigma(s,X_s)\bar\psi_\pi(s,X_s)
    \Big]\,ds
    +\sigma(s,X_s)\,d\widetilde B_s.
\]
Hence $(X,\widetilde B)$ is a weak solution of \eqref{equation:aggregate-state} under $\widetilde{\mathbb Q}^{\,\pi}$.

\noindent To prove uniqueness in law, let $(\bar X,W)$ be any weak solution of \eqref{equation:aggregate-state}, defined on a probability space with probability measure $\bar{\mathbb P}$. By Girsanov's theorem, the process $\widehat W_s\coloneqq W_s+\int_t^s\bar\psi_\pi(v,\bar X_v)\,dv$, $s\in[t,T]$, is a Brownian motion under the probability measure $\widehat{\mathbb P}$ defined by
\[
    \frac{d\widehat{\mathbb P}}{d\bar{\mathbb P}}
    \coloneqq
    \exp\left(
        -\int_t^T\langle\bar\psi_\pi(v,\bar X_v),dW_v\rangle
        -\frac12\int_t^T|\bar\psi_\pi(v,\bar X_v)|^2\,dv
    \right),
\]
and $\bar X$ solves, under $\widehat{\mathbb P}$, the equation satisfied by $X$, driven by $\widehat W$. Pathwise uniqueness holds for that equation. By the Yamada--Watanabe theorem \cite[Proposition~5.3.20 and Corollary~5.3.23]{KaratzasShreve1991}, the pair $(\bar X,\widehat W)$ has therefore the same law under $\widehat{\mathbb P}$ as the pair $(X,B)$ under $\mathbb Q$. Moreover,
\[
    \frac{d\bar{\mathbb P}}{d\widehat{\mathbb P}}
    =
    \exp\left(
        \int_t^T\langle\bar\psi_\pi(v,\bar X_v),d\widehat W_v\rangle
        -\frac12\int_t^T|\bar\psi_\pi(v,\bar X_v)|^2\,dv
    \right)
\]
is the same functional of $(\bar X,\widehat W)$ as $d\widetilde{\mathbb Q}^{\,\pi}/d\mathbb Q$ is of $(X,B)$. Consequently, the law of $\bar X$ under $\bar{\mathbb P}$ coincides with the law of $X$ under $\widetilde{\mathbb Q}^{\,\pi}$.

\noindent Since $\psi$ is bounded, the drift $\bar b_\pi$ satisfies the linear growth bound \eqref{equation:coefficient-growth} with a constant independent of $\pi$. The Burkholder--Davis--Gundy inequality and Gronwall's lemma, therefore give \eqref{equation:aggregate-state-moments}. Together with the polynomial growth of $R_\pi$ and $g$, this shows that both terms in \eqref{equation:policy-value} are integrable and that, for suitable constants $C_\pi>0$ and $m_\pi\geq1$,
\[
    |J^\pi(t,x)|
    \leq
    C_\pi(1+|x|^{m_\pi}).
\]
Finally, $J^\pi(T,x)=g(x)$ follows directly from \eqref{equation:policy-value}.
\end{proof}

\subsection{Discretely sampled realization of a policy}
Following \cite{JiaZhou2025Erratum}, we generate the state--action trajectories used as observations by sampling actions from a feedback policy at finitely many times and keeping each sampled action fixed until the next sampling time. This avoids the measurability difficulty associated with continuum independent sampling of actions. Fix $(t,x)\in[0,T)\times\mathbb{R}^d$, a behavior policy $\mu\in\Pi$, and a deterministic grid 
\[ 
    G = \{t=t_0<t_1<\cdots<t_{N_G}=T\}, \qquad |G| \coloneqq \max_{0\leq i<N_G}(t_{i+1}-t_i). 
\] We denote by $\mathbb F^G=(\mathcal F_s^G)_{0\leq s\leq T}$ the usual augmentation of the filtration generated at time $s$ by 
\[ 
    \sigma\left( W_v:0\leq v\leq s; \, U_i:0\leq i<N_G,\ t_i\leq s \right). 
\]
Then, $U_i$ becomes available at time $t_i$. Since $(U_i)_{i\geq0}$ is independent of $W$, the latter remains a Brownian motion with respect to $\mathbb F^G$. By \cite[Lemma~4.22]{Kallenberg2021}, there exists a Borel map $F_\mu: [0,T]\times\mathbb R^d\times[0,1] \longrightarrow A$ such that, for every $(s,y)\in[0,T]\times\mathbb R^d$ and every uniform random variable $U$ on $[0,1]$, $F_\mu(s,y,U) \sim \mu(\cdot\,|\,s,y)$. We recursively define the sampled action process by $\alpha_s^{\mu,G} \coloneqq F_\mu ( t_i, X_{t_i}^{t,x,\mu,G}, U_i)$, $s\in[t_i,t_{i+1})$, and let $X^{t,x,\mu,G}$ solve 
\begin{equation}\label{equation:sampled-state} 
\begin{cases}
    dX_s^{t,x,\mu,G} = b(s,X_s^{t,x,\mu,G},\alpha_s^{\mu,G})\,ds + \sigma(s,X_s^{t,x,\mu,G})\,dW_s,&s\in[t,T],\\ X_t^{t,x,\mu,G}=x,
\end{cases}
\end{equation}
successively on the intervals of the grid. We set $\alpha_T^{\mu,G} \coloneqq \alpha_{t_{N_G-1}}^{\mu,G}$. For each $i$, the random variable $X_{t_i}^{t,x,\mu,G}$ depends only on the Brownian path up to $t_i$ and on the previous randomizations $U_0,\ldots,U_{i-1}$. Hence $U_i$ is independent of $X_{t_i}^{t,x,\mu,G}$ and, conditionally on the state, 
\begin{equation}\label{equation:sampled-action-law} 
    \mathcal L_{\mathbb P} \big( \alpha_{t_i}^{\mu,G} \,\big|\, X_{t_i}^{t,x,\mu,G} \big) = \mu \big( a \,\big|\, t_i,X_{t_i}^{t,x,\mu,G} \big)da.
\end{equation} 

\begin{proposition} \label{proposition:sampled-state} 
For every $(t,x)\in[0,T)\times\mathbb R^d$, $\mu\in\Pi$, and grid $G$, the preceding construction yields an $\mathbb F^G$-progressively measurable $A$-valued process $\alpha^{\mu,G}$ and a unique strong solution $X^{t,x,\mu,G}$ of \eqref{equation:sampled-state}. Moreover, for every $p\geq2$, 
\begin{equation}\label{equation:sampled-state-moments} 
    \mathbb E\bigg[ \sup_{t\leq s\leq T} |X_s^{t,x,\mu,G}|^p \bigg] \leq C_p(1+|x|^p),
\end{equation} 
where $C_p$ is independent of $t$, $x$, $\mu$, and $G$. Finally, for every $p\geq1$, $\alpha^{\mu,G} \in \mathcal M^p_{t,T} (A;\mathbb P,\mathbb F^G)$. 
\end{proposition}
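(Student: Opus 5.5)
The plan is an induction over the grid intervals, after which the moment bound and the integrability of the action process follow from standard estimates.

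\textbf{Construction by induction.} Set $X_{t_0}\coloneqq x$ and suppose $X$ has been built on $[t,t_i]$ as a continuous $\mathbb F^G$-adapted process, with $X_{t_i}$ measurable with respect to $\mathcal F^W_{t_i}\vee\sigma(U_0,\dots,U_{i-1})$. Define $\alpha_{t_i}\coloneqq F_\mu(t_i,X_{t_i},U_i)$.
\begin{enumerate}[(i)]
\item Since $F_\mu$ is Borel, $\alpha_{t_i}$ is $\mathcal F^G_{t_i}$-measurable, and $\alpha$ is constant on $[t_i,t_{i+1})$. Hence $\alpha$ is piecewise constant and right-continuous, and therefore $\mathbb F^G$-progressively measurable.
\item On $[t_i,t_{i+1}]$, consider the equation with random but $\mathcal F^G_{t_i}$-measurable frozen action $\alpha_{t_i}$. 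Its drift $(s,y,\omega)\mapsto b(s,y,\alpha_{t_i}(\omega))$ is locally Lipschitz in $y$ uniformly in $a$, by \eqref{equation:coefficient-lipschitz2}, and has linear growth uniformly in $a$, by \eqref{equation:coefficient-growth}. The diffusion coefficient $\sigma$ is globally Lipschitz. Since $W$ is an $\mathbb F^G$-Brownian motion, classical theory for locally Lipschitz coefficients with linear growth gives pathwise uniqueness and a unique strong solution from the initial value $X_{t_i}$. This uses localization by exit times of balls, with non-explosion supplied by the linear growth.
\item The solution on $[t_i,t_{i+1}]$ is a measurable functional of $X_{t_i}$, $\alpha_{t_i}$ and the increments of $W$. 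This preserves the measurability property needed for the next step, and in particular gives the independence of $U_{i+1}$.
\end{enumerate}
Concatenating the pieces yields a continuous solution on $[t,T]$. Uniqueness holds because any two solutions agree on successive intervals: they share $X_{t_i}$, hence share $\alpha_{t_i}$, hence agree on the next interval.

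\textbf{Moment bound \eqref{equation:sampled-state-moments}.} I would follow the standard route.
\begin{enumerate}[(i)]
\item Write $X_s=x+\int_t^s b\,dv+\int_t^s\sigma\,dW$ and use \eqref{equation:coefficient-growth}, which holds uniformly in the action.
\item Stop at $\tau_R=\inf\{s:|X_s|\geq R\}$ and apply Jensen to the drift term and Burkholder--Davis--Gundy to the stochastic integral. This gives
\[
\mathbb E\Bigl[\sup_{t\le u\le s\wedge\tau_R}|X_u|^p\Bigr]\le C\Bigl(1+|x|^p+\int_t^s\mathbb E\Bigl[\sup_{t\le u\le v\wedge\tau_R}|X_u|^p\Bigr]dv\Bigr).
\]
\item Apply Gronwall and let $R\to\infty$ using Fatou's lemma.
\end{enumerate}
The constant depends only on $C$ in \eqref{equation:coefficient-growth}, $p$ and $T$, so it is independent of $t$, $x$, $\mu$ and $G$.

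\textbf{Integrability $\alpha\in\mathcal M^p_{t,T}$.} By \eqref{equation:sampled-action-law} and \eqref{equation:policy-moments},
\[
\mathbb E\bigl[|\alpha_{t_i}|^p\bigr]=\mathbb E\Bigl[\int_A|a|^p\mu(a\,|\,t_i,X_{t_i})\,da\Bigr]\le C_{\mu,p}\bigl(1+\mathbb E|X_{t_i}|^{m_{\mu,p}}\bigr)<\infty,
\]
where the last expectation is finite by the moment bound. Therefore
\[
\mathbb E\int_t^T|\alpha_s|^p\,ds=\sum_i(t_{i+1}-t_i)\,\mathbb E|\alpha_{t_i}|^p<\infty.
\]

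\textbf{Main obstacle.} The delicate step is the well-posedness on each interval. The drift is only locally Lipschitz, and the action enters through a random, $\mathcal F^G_{t_i}$-measurable parameter that is not deterministic. I would handle this by noting two things. First, the Lipschitz constant $L_R$ in \eqref{equation:coefficient-lipschitz2} is uniform in $a$. Second, the random coefficient $b(s,y,\alpha_{t_i}(\omega))$ is progressively measurable and locally Lipschitz uniformly in $\omega$. Under these two properties the usual Picard and localization argument goes through verbatim for random coefficients. The measurability of the resulting solution map should then follow from the Yamada--Watanabe functional representation.
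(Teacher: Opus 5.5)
Your proposal is correct and follows essentially the same route as the paper. You solve interval by interval with the frozen $\mathcal F^G_{t_i}$-measurable action, using local Lipschitz continuity uniform in $a$, linear growth and localization. The moment bound comes from the stopped BDG--Gronwall estimate, with constants depending only on the growth constant, $p$ and $T$. Finally, $\alpha^{\mu,G}\in\mathcal M^p_{t,T}$ follows from \eqref{equation:sampled-action-law} combined with \eqref{equation:policy-moments}. Your added remarks on the measurability of the interval-wise solution map, and hence the independence of $U_{i+1}$, spell out what the paper only states in passing just before the proposition.
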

\begin{proof}
On each interval $[t_i,t_{i+1})$, the sampled action is constant
and $\mathcal F_{t_i}^G$-measurable. For this given random action,
\eqref{equation:coefficient-lipschitz} and \eqref{equation:coefficient-lipschitz2} give local Lipschitz
continuity in the state with constants independent of its value.
Together with the uniform linear-growth bound
\eqref{equation:coefficient-growth}, localization yields a unique
nonexplosive strong solution on the interval. Proceeding
recursively over the finite grid defines $X^{t,x,\mu,G}$ and the
progressively measurable process $\alpha^{\mu,G}$ on $[t,T]$.

\noindent Let
\[
 \tau_R\coloneqq\inf\{s\in[t,T]:|X_s^{t,x,\mu,G}|\geq R\}\wedge T.
\]
By the Burkholder--Davis--Gundy inequality, the linear-growth bound \eqref{equation:coefficient-growth}, and Gronwall's lemma, we obtain, for every $p\geq2$,
\[
 \mathbb E\bigg[\sup_{t\leq s\leq T}
 |X_{s\wedge\tau_R}^{t,x,\mu,G}|^p\bigg]
 \leq C_p(1+|x|^p),
\]
where $C_p$ is independent of $R$, $\mu$, and $G$. Letting
$R\to\infty$ proves \eqref{equation:sampled-state-moments}.
Finally, by \eqref{equation:sampled-action-law},
\[
 \mathbb E\left[\int_t^T|\alpha_s^{\mu,G}|^p\,ds\right]
 =\sum_{i=0}^{N_G-1}(t_{i+1}-t_i)
   \mathbb E\left[\int_A|a|^p
   \mu(a\,|\,t_i,X_{t_i}^{t,x,\mu,G})\,da\right].
\]
This is finite by \eqref{equation:policy-moments} and the state moment bounds, which proves that
$\alpha^{\mu,G}\in\mathcal M^p_{t,T}(A;\mathbb P,\mathbb F^G)$.
\end{proof}

\noindent The two state processes $\bar X^{t,x,\pi}$ and $X^{t,x,\mu,G}$ introduced above have distinct roles. The exploratory process $\bar X^{t,x,\pi}$ determines the entropy-regularized value $J^\pi$ of the target policy and will enter our BSDE formulation. The pair $(X^{t,x,\mu,G},\alpha^{\mu,G})$ represents an observable state--action trajectory generated under the behavior policy $\mu$. The data are on-policy when $\mu=\pi$ and off-policy when $\mu\neq\pi$. The relation between discretely sampled policies and the exploratory formulation in the vanishing-mesh limit $|G|\to0$ is studied in \cite{JiaOuyangZhang2025}, under additional regularity assumptions on the coefficients and on the policy.

\section{BSDE representation and the \texorpdfstring{$q$}{q}-function}
\label{section:bsde-and-q}

We represent the value of a fixed policy by a BSDE. Its martingale integrand will then
provide a definition of the $q$-function without assuming
classical differentiability of the value function.

\subsection{Reference diffusion}\label{section:reference-dynamics}

We first introduce the reference diffusion which will be used throughout the paper. Let $\mathbb{Q}$ be a probability measure under which $B$ is an $n$-dimensional Brownian motion, and let $\mathbb{F}^B$
denote its augmented natural filtration. For
$(t,x)\in[0,T]\times\mathbb{R}^d$, consider
\begin{equation}\label{equation:reference-diff}
\begin{cases}
    dX_s^{t,x}=b_0(s,X_s^{t,x})\,ds+\sigma(s,X_s^{t,x})\,dB_s,&s\in[t,T],\\
    X_t^{t,x}=x.
\end{cases}
\end{equation}
By Assumption~\ref{assumption:coefficients},
\eqref{equation:reference-diff} admits a unique strong solution, which has finite moments of every order. The law of $X^{t,x}$ does not depend on the choice of $(\mathbb Q,B)$, which we omit from the notation.

\medskip\noindent We next show that every discretely sampled state process introduced in Section~\ref{section:framework} can be represented as a solution of the reference equation \eqref{equation:reference-diff} under a
suitable equivalent probability measure. Indeed, fix $\mu\in\Pi$, $(t,x)\in[0,T)\times\mathbb{R}^d$, and a sampling grid $G$. Recall that, under $\mathbb{P}$,
$X^{t,x,\mu,G}$ satisfies
\begin{equation}\label{equation:sampled-state-girsanov}
\begin{cases}
    \begin{aligned}
    dX_s^{t,x,\mu,G}={}&\Big[b_0(s,X_s^{t,x,\mu,G})
    +\sigma(s,X_s^{t,x,\mu,G})\psi(s,X_s^{t,x,\mu,G},\alpha_s^{\mu,G})\Big]\,ds\\
    &+\sigma(s,X_s^{t,x,\mu,G})\,dW_s,\qquad s\in[t,T],
    \end{aligned}\\[1ex]
    X^{t,x,\mu,G}_t = x.
\end{cases}
\end{equation}
Define, for $s\in[t,T]$,
\[
    L_s^{\mu,G}\coloneqq\exp\left(-\int_t^s
    \left\langle
    \psi(v,X_v^{t,x,\mu,G},\alpha_v^{\mu,G}),
    dW_v\right\rangle-\frac12\int_t^s
    \left|\psi(v,X_v^{t,x,\mu,G},\alpha_v^{\mu,G})
    \right|^2\,dv\right).   
\]
Since $\psi$ is bounded, Novikov's condition is satisfied. Hence $L^{\mu,G}$ is a strictly positive martingale with $L_t^{\mu,G}=1$, and
\[
    \frac{d\mathbb{Q}^{\mu,G}}{d\mathbb{P}}
    \bigg|_{\mathcal{F}_T^G}=L_T^{\mu,G}
\]
defines a probability measure $\mathbb{Q}^{\mu,G}$ on $\mathcal{F}_T^G$ which is equivalent to $\mathbb P$. The dependence of $\mathbb{Q}^{\mu,G}$ on the initial condition $(t,x)$ is suppressed in the notation. By Girsanov's theorem, the process
\begin{equation}\label{equation:reference-brownian-motion}
    B_s^{\mu,G}\coloneqq 
\begin{cases}
    W_s,&s\in[0,t),\\
    W_s+\int_t^s
    \psi(v,X_v^{t,x,\mu,G},\alpha_v^{\mu,G})\,dv,&s\in[t,T],
\end{cases}
\end{equation}
is a Brownian motion under $\mathbb{Q}^{\mu,G}$ with respect to $\mathbb{F}^G$.

\noindent Substituting $dW_s=dB_s^{\mu,G}
-\psi(s,X_s^{t,x,\mu,G},\alpha_s^{\mu,G})\,ds$
into \eqref{equation:sampled-state-girsanov} yields
\begin{equation*}
\begin{cases}
    dX_s^{t,x,\mu,G}=b_0(s,X_s^{t,x,\mu,G})\,ds
    +\sigma(s,X_s^{t,x,\mu,G})\,dB_s^{\mu,G},&s\in[t,T],\\
    X_t^{t,x,\mu,G}=x.
\end{cases}
\end{equation*}
Thus, under $\mathbb Q^{\mu,G}$, the sampled state process solves the reference SDE \eqref{equation:reference-diff}, with driving
Brownian motion $B^{\mu,G}$. In particular, by strong existence and pathwise uniqueness for \eqref{equation:reference-diff}, $X^{t,x,\mu,G}$ coincides $\mathbb Q^{\mu,G}$-almost surely with
the unique strong solution of the reference equation driven by $B^{\mu,G}$. Consequently, $X^{t,x,\mu,G}$ is adapted to the augmented natural filtration $\mathbb F^{B^{\mu,G}}$.

\medskip\noindent The change of measure also allows us to transfer integrability between $\mathbb P$ and $\mathbb Q^{\mu,G}$. For this purpose, we will repeatedly use the following uniform moment estimates for the density and its inverse: for every $p\geq1$,
\begin{equation}\label{equation:density-moments}
    \mathbb E\big[(L_T^{\mu,G})^p\big]
    +
    \mathbb E^{\mathbb Q^{\mu,G}}
    \big[(L_T^{\mu,G})^{-p}\big]
    \leq C_p,
\end{equation}
where $C_p$ is independent of $\mu$, $G$, $t$, and $x$. These bounds follow directly from the boundedness of $\psi$.

\begin{remark}
\label{remark:reference-randomization}
The change of measure does not affect the  variables $(U_i)_{i\geq 0}$, that remain independent, uniformly distributed on $[0,1]$, and independent of $B^{\mu,G}$ under $\mathbb Q^{\mu,G}$ for every choice of $\mu, G$. More precisely, let $\mathcal U=\sigma(U_i:i\geq0)$, and extend $\mathbb Q^{\mu,G}$ to $\mathcal F$ by
$d\mathbb Q^{\mu,G}=L_T^{\mu,G}\,d\mathbb P$.
Enlarging the Brownian filtration at time zero with $\mathcal U$ leaves $W$ a Brownian motion under $\mathbb P$. Since $\psi$ is bounded, $L^{\mu,G}$ is a true martingale
in this enlarged filtration. As $L_t^{\mu,G}=1$, we obtain
\[
    \mathbb E\big[L_T^{\mu,G}\mid\mathcal U\big]=1.
\]
Consequently, $\mathbb Q^{\mu,G}$ and $\mathbb P$ coincide
on $\mathcal U$.
Moreover, Girsanov's theorem guarantees that $B^{\mu,G}$ is a Brownian motion
in the same enlarged filtration under $\mathbb Q^{\mu,G}$,
and hence independent of $\mathcal U$.
\end{remark}

\noindent We conclude this subsection with a support property of the reference and sampled state--action processes that will be used repeatedly below. Its role is to pass between identities holding along sampled trajectories and Lebesgue-almost everywhere identities in the time--state--action variables. Local uniform ellipticity of $\sigma$ ensures that the reference diffusion has strictly positive transition densities, which leads to the following result.

\begin{proposition}
\label{proposition:densities}
Fix $(t,x)\in[0,T)\times\mathbb R^d$. Let $X^{t,x}$ be the reference diffusion
\eqref{equation:reference-diff} on a Brownian probability
space $(\mathbb Q,B)$. For every $s\in(t,T)$, $X_s^{t,x}$ has a density $\rho^{t,x}(s,\cdot)$, with $(s,y)\longmapsto\rho^{t,x}(s,y)$ jointly continuous on $(t,T)\times\mathbb R^d$, satisfying
\[
\rho^{t,x}(s,y)>0,\qquad ds\otimes dy\text{-a.e. on }(t,T)\times\mathbb R^d.
\]
As a consequence, given $\mu\in\Pi$ and a sampling grid $G$, for every nonnegative Borel function
$h:(t,T)\times\mathbb R^d\times A\to[0,+\infty]$, it holds that
\begin{equation}
\label{equation:sampled-null-equivalence}
\mathbb E^{\mathbb Q^{\mu,G}}\left[\int_t^T h(s,X_s^{t,x,\mu,G},\alpha_s^{\mu,G})\,ds\right]=0
\quad\Longleftrightarrow\quad
h=0\quad ds\otimes dy\otimes da\text{-a.e.}\text{ on }(t,T)\times\mathbb R^d\times A.
\end{equation}
The same equivalence holds under $\mathbb P$.
\end{proposition}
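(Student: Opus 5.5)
The plan has two parts: first establish the density statement for the reference diffusion, then transfer it to the sampled state–action pair through the change of measure and the conditional law of the actions.

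\medskip\noindent\textbf{Density of the reference diffusion.} The coefficients $b_0$ and $\sigma$ are only locally elliptic and are merely Lipschitz in $x$, measurable in $t$, so I would localize. For $R>0$, let $\tau_R$ be the exit time of $X^{t,x}$ from the ball $B_R$. Modify $b_0,\sigma$ outside $B_{R+1}$ to obtain coefficients $b_0^R,\sigma^R$ that are bounded, globally Lipschitz in $x$ and uniformly elliptic; for instance, replace $\sigma\sigma^\top$ by an interpolation with the identity.

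\medskip\noindent The modified diffusion $X^R$ then has a transition density $p^R(t,x;s,y)$ by classical parabolic theory. Aronson's estimates give strictly positive Gaussian lower bounds. Continuity of $(s,y)\longmapsto p^R$ on $(t,T)\times\mathbb R^d$ follows from Nash/De Giorgi–type Hölder continuity of weak solutions of the Kolmogorov forward equation, which is in divergence form after absorbing the Lipschitz coefficients. Measurability in time is allowed there.

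\medskip\noindent Pathwise uniqueness gives $X=X^R$ on $\{\tau_R>s\}$. Hence, for every Borel $\Gamma$,
\[
\mathbb Q(X_s\in\Gamma)\ \geq\ \mathbb Q(X^R_s\in\Gamma,\ \tau_R>s),
\]
and conversely
\[
\mathbb Q(X_s\in\Gamma)\ \leq\ \mathbb Q(X^R_s\in\Gamma)+\mathbb Q(\tau_R\leq T).
\]
These two bounds yield absolute continuity: if $|\Gamma|=0$, letting $R\to\infty$ shows $\mathbb Q(X_s\in\Gamma)=0$.

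\medskip\noindent Positivity and joint continuity I would get from a cleaner route, since the statement only claims positivity a.e. Write $\rho=\lim_R\rho_R$, where $\rho_R(s,y)$ is the density of the sub-probability measure $\mathbb Q(X_s\in dy,\ \tau_R>s)$. This is the Dirichlet heat kernel of the modified operator on $B_R$. It is positive on $B_R$ by the strong maximum principle, since the domain is connected, and it is continuous. Moreover $\rho_R$ increases in $R$. For continuity of $\rho$, I would rather apply Krylov/Aronson directly to the full operator, which is locally uniformly parabolic with locally bounded coefficients. Interior Hölder estimates for nonnegative weak solutions of $\partial_s\rho=\mathcal L^*\rho$ on compacts of $(t,T)\times\mathbb R^d$ then give joint continuity, once we know $\rho$ is locally bounded. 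Local boundedness comes from $\rho\leq p^{R'}$ near compacts via the decomposition above, because the excursion term also has a bounded density, by the strong Markov property at the hitting time of $\partial B_{R}$. Positivity a.e. then follows from $\rho\geq\rho_R>0$ on $B_R$.

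\medskip\noindent\textbf{Transfer to the sampled pair.} By Section~\ref{section:reference-dynamics}, under $\mathbb Q^{\mu,G}$ the process $X^{t,x,\mu,G}$ has the law of $X^{t,x}$. The action $\alpha_s^{\mu,G}$ for $s\in[t_i,t_{i+1})$ equals $F_\mu(t_i,X_{t_i},U_i)$, where $U_i$ is independent of $\mathcal F^{B}_{T}\vee\sigma(U_j:j<i)$ by Remark~\ref{remark:reference-randomization}. Conditioning on $X$, I get
\[
\mathbb E^{\mathbb Q^{\mu,G}}\!\int_{t_i}^{t_{i+1}}\! h\,ds=\int_{t_i}^{t_{i+1}}\!\!\int_A \mathbb E\big[h(s,X_s,a)\,\mu(a\,|\,t_i,X_{t_i})\big]\,da\,ds .
\]
Next I disintegrate with the joint density of $(X_{t_i},X_s)$ for $t_i<s$. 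For $i\geq1$, Markov's property writes this joint density as $\rho^{t,x}(t_i,z)\,p(t_i,z;s,y)$, and both factors are positive a.e. by the first part applied from $(t_i,z)$. For $i=0$ the point $X_{t_0}=x$ is deterministic.

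\medskip\noindent Since $\mu>0$ everywhere, the integral vanishes if and only if $h(s,y,a)=0$ for $dz\,dy\,da$-a.e. weighted point, hence $ds\,dy\,da$-a.e. This gives both directions of \eqref{equation:sampled-null-equivalence}: "$\Leftarrow$" needs only absolute continuity, and "$\Rightarrow$" needs positivity. The statement under $\mathbb P$ follows because $\mathbb P\sim\mathbb Q^{\mu,G}$, so null events coincide.

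\medskip\noindent The main obstacle is the first part: obtaining joint continuity and a.e. positivity of the density with only locally elliptic, time-measurable, merely Lipschitz coefficients. I would lean on the localization together with the Dirichlet heat kernel positivity, and on local parabolic regularity for the adjoint equation.
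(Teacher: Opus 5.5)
Your transfer step is the paper's argument: the conditional law of $\alpha_s$ given $\mathcal F^{B^{\mu,G}}_T$, independence of the $U_i$, and strict positivity of $\mu$. The joint density of $(X_{t_i},X_s)$ is superfluous, though. Since $\mu(a\,|\,t_i,X_{t_i})>0$ pointwise, vanishing of the integral already gives $h(s,X_s,a)=0$ $\mathbb Q^{\mu,G}$-a.s.\ for a.e.\ $(s,a)$, so only the marginal density of $X_s$ and its a.e.\ positivity are needed.

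For the density itself your route genuinely differs. The paper treats the unlocalized Fokker--Planck equation directly. It writes the generator in divergence form, with $\tfrac12\sigma\sigma^\top$ locally Lipschitz and locally nondegenerate and $b_0-\operatorname{div}(\tfrac12\sigma\sigma^\top)$ locally bounded. It then cites Bogachev--R\"ockner--Shaposhnikov, which gives a continuous, strictly positive density in one stroke under purely local hypotheses, at the cost of lifting to dimension three when $d\le2$. You instead localize and use only global Aronson theory and the Dirichlet kernel of the ball $\mathcal O_R$ (your $B_R$; here $B$ is the Brownian motion). The coupling $X=X^R$ before $\tau_R$ gives absolute continuity, and $\rho\ge\rho_R>0$ gives a.e.\ positivity. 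This is sound and dimension-free, subject to two points:
\begin{enumerate}[(a)]
\item You must modify $\sigma$ itself so that $\sigma^R=\sigma$ on $\mathcal O_{R+1}$, because your coupling is pathwise and modifying $\sigma\sigma^\top$ is not enough. For example, take $\sigma^R(t,x)=\sigma(t,\Pi x)$ with $\Pi$ the radial projection onto $\overline{\mathcal O}_{R+1}$.
\item You must justify that the free and killed laws of the SDE coincide with Aronson's fundamental solution and the Dirichlet kernel, although the coefficients are only measurable in time. This needs uniqueness for the forward equation, e.g.\ via Krylov's $W^{1,2}_p$-theory for the backward equation and the It\^o--Krylov formula.
\end{enumerate}

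The step that does not stand as written is the joint continuity of $\rho$. Interior De Giorgi--Nash--Moser estimates apply to weak solutions in the energy space. The SDE only tells you that $\rho$ is a distributional solution of the forward equation; you would first have to place it locally in $L^2_tH^1_x$, which you do not address. Your local bound for the excursion term is also circular: after the exit from $\mathcal O_R$ you need density bounds for the full process started on the sphere, which is what you are proving.

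A nested-ball decomposition removes both issues and makes the interior-regularity step unnecessary. Fix a compact $K\subset\mathcal O_r$ and $r<r'<R$. Let $\theta_k$ be the successive returns to $\partial\mathcal O_{r'}$ after exits from $\mathcal O_R$, and $\tau_k$ the exit following $\theta_k$. For $y\in K$, split $\{\tau_R\le s\}$ according to the unique $k$ with $\theta_k\le s<\tau_k$. The strong Markov property turns the excursion density into $\sum_k\mathbb E\bigl[\mathbbm 1_{\{\theta_k<s\}}\,p_{\mathcal O_R}(\theta_k,X_{\theta_k};s,y)\bigr]$, with $p_{\mathcal O_R}$ the Dirichlet kernel. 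Since $|X_{\theta_k}-y|\ge r'-r$, the Gaussian upper bound is uniform in $s-\theta_k$. Hence each term is continuous in $(s,y)$ by dominated convergence and bounded by $C\,\mathbb Q(\theta_k\le T)$. The series converges uniformly because the expected number of crossings of the annulus before $T$ is finite.

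This gap concerns only the continuity claim, which the paper never uses afterwards. Absolute continuity and a.e.\ positivity, and hence \eqref{equation:sampled-null-equivalence}, already follow from your argument.
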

\begin{proof}
Set $\Sigma(t,x)\coloneqq\tfrac12\sigma(t,x)\sigma(t,x)^\top$. Under Assumption~\ref{assumption:coefficients}, this $d\times d$ matrix is locally bounded, locally Lipschitz
in the state uniformly in time, and locally uniformly positive definite. The reference drift $b_0$ is locally bounded. In the divergence-form formulation, the drift $b_0-\operatorname{div}\Sigma$ is locally essentially bounded as well. Thus \cite[Corollaries~2.2 and~3.1]{BogachevRocknerShaposhnikov2009}
give strictly positive and continuous transition densities for the reference diffusion. Since part of the analysis in \cite{BogachevRocknerShaposhnikov2009} is carried out for $d>2$, in the cases $d=1$ and $d=2$ these results are applied to the three-dimensional diffusion $(X_s^{t,x},B'_s-B'_t)_{s\in[t,T]}$, where $B'$ is a $(3-d)$-dimensional Brownian motion independent of $B$, defined on a possibly enlarged probability space. Its coefficients satisfy the same local conditions and, by independence, its density at time $s$ is the product of the density of $X_s^{t,x}$ and of a Gaussian density, so that the conclusions transfer to $X_s^{t,x}$.

\noindent Under $\mathbb Q^{\mu,G}$, the process $X^{t,x,\mu,G}$ is adapted to $\mathbb F^{B^{\mu,G}}$ and the randomizations are independent of $B^{\mu,G}$ by Remark~\ref{remark:reference-randomization}. For $s\in[t_i,t_{i+1})$, the conditional law of $\alpha^{\mu,G}_s$ given $\mathcal F_s^{B^{\mu,G}}$ therefore has density $\mu(a\,|\,t_i,X^{t,x,\mu,G}_{t_i})$. Consequently,
\[
 \mathbb E^{\mathbb Q^{\mu,G}}\left[\int_t^T h(s,X^{t,x,\mu,G}_s,\alpha^{\mu,G}_s)\,ds\right]
 =\sum_{i=0}^{N_G-1}\mathbb E^{\mathbb Q^{\mu,G}}
   \left[\int_{t_i}^{t_{i+1}}\int_A
   h(s,X^{t,x,\mu,G}_s,a)\mu(a\,|\,t_i,X^{t,x,\mu,G}_{t_i})\,da\,ds\right].
\]
Since the policy density is strictly positive, this expression
vanishes if and only if $h(s,X^{t,x,\mu,G}_s,a)=0$
$ds\otimes d\mathbb Q^{\mu,G}\otimes da$-almost everywhere. It follows that \eqref{equation:sampled-null-equivalence} holds. Equivalence of $\mathbb P$ and $\mathbb Q^{\mu,G}$ proves the last assertion.
\end{proof}

\subsection{A BSDE representation for the policy value}

We now provide a BSDE representation of the value $J^\pi$ associated with a fixed policy $\pi\in\Pi$. This is the first step towards the BSDE characterization of the $q$-function developed below.

\medskip\noindent Fix $\pi\in\Pi$ and define
\begin{equation}\label{equation:policy-driver}
    f_\pi(t,x,y,z)\coloneqq\langle z,\bar\psi_\pi(t,x)\rangle +R_\pi(t,x)
    -\beta y,\qquad(t,x,y,z)\in [0,T]\times\mathbb R^d\times\mathbb R\times\mathbb R^{n}.
\end{equation}

\noindent We first prove the following elementary properties of $f_\pi$.

\begin{lemma}\label{lemma:policy-driver}
For every $\pi\in\Pi$, the driver $f_\pi$ is Borel measurable and uniformly
Lipschitz continuous in $(y,z)$. More precisely,
\begin{equation}\label{equation:policy-driver-lipschitz}
    |f_\pi(t,x,y,z)-f_\pi(t,x,y',z')|
    \leq
    \beta |y-y'|
    +
    \|\psi\|_\infty |z-z'|
\end{equation}
for every $(t,x)\in[0,T]\times\mathbb R^d$,
$y,y'\in\mathbb R$, and $z,z'\in\mathbb R^n$. Moreover,
\begin{equation}\label{equation:policy-driver-monotonicity}
   (y-y')\bigl(f_\pi(t,x,y,z)-f_\pi(t,x,y',z)\bigr)= -\beta |y-y'|^2,
\end{equation}
and there exist constants $C_\pi>0$ and $m_\pi\geq1$ such that
\begin{equation}\label{equation:policy-driver-growth}
    |f_\pi(t,x,0,0)|
    \leq
    C_\pi(1+|x|^{m_\pi})
\end{equation}
for every $(t,x)\in[0,T]\times\mathbb R^d$.
\end{lemma}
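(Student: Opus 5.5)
The lemma follows directly from the definition \eqref{equation:policy-driver} and the properties of $\bar\psi_\pi$ and $R_\pi$ recorded after their introduction. I would verify the four claims in the order stated; the only step needing care is measurability of the averaged quantities.

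\emph{Measurability.} $f_\pi$ is built by sums and products from $(z,y)$ and the functions $\bar\psi_\pi$, $R_\pi$ of $(t,x)$. So it suffices that $\bar\psi_\pi$, $\bar r_\pi$, $\ell_\pi$ are Borel. Each is an integral over $A$ of a jointly Borel function of $(t,x,a)$: namely $\psi\pi$, $r\pi$, and $\pi\log\pi$. Moreover, for every $(t,x)$ the integrand is integrable in $a$:
\begin{itemize}
\item $\psi$ is bounded and $\pi(\cdot\,|\,t,x)$ is a probability density;
\item $r$ has polynomial growth and \eqref{equation:policy-moments} holds;
\item \eqref{equation:entropy-integrability} controls $\pi\log\pi$.
\end{itemize}
I would split each integrand into positive and negative parts and apply Tonelli's theorem. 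This gives Borel measurability of $(t,x)\mapsto\int_A h^\pm(t,x,a)\,da$ as $[0,+\infty]$-valued functions, both finite, and hence of their difference. This is the only step that is not a one-line computation, and even it is standard.

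\emph{Lipschitz bound and monotonicity.} Directly from the definition,
\[
f_\pi(t,x,y,z)-f_\pi(t,x,y',z')=\langle z-z',\bar\psi_\pi(t,x)\rangle-\beta(y-y').
\]
The Cauchy--Schwarz inequality then gives \eqref{equation:policy-driver-lipschitz}, using $|\bar\psi_\pi(t,x)|\le\int_A|\psi(t,x,a)|\pi(a\,|\,t,x)\,da\le\|\psi\|_\infty$, which follows from Jensen's inequality. Taking $z=z'$ and multiplying by $y-y'$ gives the identity \eqref{equation:policy-driver-monotonicity}.

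\emph{Growth.} We have $f_\pi(t,x,0,0)=R_\pi(t,x)=\bar r_\pi(t,x)-\gamma\ell_\pi(t,x)$. By \eqref{equation:reward-growth} and \eqref{equation:policy-moments} with $p=m$,
\[
|\bar r_\pi(t,x)|\le C\bigl(1+|x|^m+C_{\pi,m}(1+|x|^{m_{\pi,m}})\bigr).
\]
By \eqref{equation:entropy-integrability}, $|\ell_\pi(t,x)|\le C_\pi(1+|x|^{m_\pi})$. Combining these with $1+|x|^{a}+|x|^{b}\le 3(1+|x|^{\max(a,b)})$ yields \eqref{equation:policy-driver-growth} with a suitable exponent $\max(m,m_{\pi,m},m_\pi)$ and constant.
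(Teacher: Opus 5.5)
Your proposal is correct and follows the paper's proof: measurability from that of $\bar\psi_\pi$ and $R_\pi$, the Lipschitz bound and the monotonicity identity from the affine dependence on $(y,z)$ together with $|\bar\psi_\pi|\le\|\psi\|_\infty$, and the growth bound from the polynomial growth of $R_\pi$. You also supply the details the paper leaves implicit, namely the Tonelli argument for the averaged coefficients and the explicit constants obtained from \eqref{equation:reward-growth}, \eqref{equation:policy-moments}, and \eqref{equation:entropy-integrability}.
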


\begin{proof}
Measurability follows from the measurability of
$\bar\psi_\pi$ and $R_\pi$. Since $f_\pi$ is affine in $(y,z)$ and $|\bar\psi_\pi|\leq\|\psi\|_\infty$, both \eqref{equation:policy-driver-lipschitz} and \eqref{equation:policy-driver-monotonicity} are immediate. Finally,
\eqref{equation:policy-driver-growth} follows from the polynomial growth of $R_\pi$.
\end{proof}

\noindent Let now $\mathbb Q$ be a probability measure under which $B$ is an $n$-dimensional Brownian motion, and let $X^{t,x}$ denote the solution of the reference equation \eqref{equation:reference-diff}. We consider
the BSDE
\begin{equation}\label{equation:policy-bsde}
    Y_s^{t,x,\pi}=g(X_T^{t,x})+\int_s^T
    f_\pi\bigl(v,X_v^{t,x},Y_v^{t,x,\pi}, Z_v^{t,x,\pi}\bigr)\,dv-\int_s^T \langle
    Z_v^{t,x,\pi},dB_v\rangle,\qquad s\in[t,T].
\end{equation}

\begin{proposition}
\label{proposition:bsde-value}
Let $\pi\in\Pi$ and $(t,x)\in[0,T]\times\mathbb R^d$. Then, for every $p>1$, \eqref{equation:policy-bsde} admits a unique solution $(Y^{t,x,\pi},Z^{t,x,\pi})\in\mathcal S^p_{t,T}
    (\mathbb R;\mathbb Q,\mathbb F^B)\times
    \mathcal H^p_{t,T}(\mathbb R^{n};\mathbb Q,\mathbb F^B)$. Moreover, there exist constants
$\widetilde C_{\pi,p}>0$ and $\widetilde m_{\pi,p}\geq1$ such that
\begin{equation}\label{equation:bsde-moments}
    \mathbb E^{\mathbb Q}\Bigg[\sup_{t\leq s\leq T}
    |Y_s^{t,x,\pi}|^p+\left(\int_t^T
    |Z_s^{t,x,\pi}|^2\,ds\right)^{p/2}\Bigg]
    \leq \widetilde C_{\pi,p}\bigl(1+|x|^{\widetilde m_{\pi,p}}\bigr).
\end{equation}
Finally, the policy value $J^\pi$ is Borel measurable and satisfies
\begin{equation}\label{equation:value-representation}
    Y_s^{t,x,\pi}=J^\pi(s,X_s^{t,x}),\qquad\mathbb Q\text{-a.s.},\;
    s\in[t,T].
\end{equation}
In particular, $Y_t^{t,x,\pi}=J^\pi(t,x)$, and this quantity is deterministic and independent of the particular Brownian probability space used to realize the reference diffusion $X^{t,x}$.
\end{proposition}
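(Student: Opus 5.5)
Existence, uniqueness and the moment bound \eqref{equation:bsde-moments} follow from standard $L^p$ theory for Lipschitz BSDEs. By Lemma~\ref{lemma:policy-driver}, $f_\pi$ is Lipschitz in $(y,z)$ with constants independent of $(t,x)$. The terminal value $g(X_T^{t,x})$ and the process $f_\pi(\cdot,X^{t,x},0,0)$ lie in every $L^p$, with norms bounded by $C(1+|x|^{m})$; this uses \eqref{equation:reward-growth}, \eqref{equation:policy-driver-growth} and the polynomial moments of the reference diffusion. The $L^p$ existence--uniqueness theorem with a priori estimates (Briand--Delyon--Hu--Pardoux--Stoica, or \cite[Section~5]{ElKarouiPengQuenez1997} for $p\ge2$) then gives a unique solution in $\mathcal S^p\times\mathcal H^p$ for each $p>1$, together with \eqref{equation:bsde-moments}. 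Uniqueness across different $p$ is consistent, since $\mathcal S^p\times\mathcal H^p$ decreases in $p$.

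For the identification with $J^\pi$, I would linearize the driver and change measure. Define $\widetilde{\mathbb Q}^{\pi}$ by the density $\exp\bigl(\int_t^T\langle\bar\psi_\pi(v,X_v^{t,x}),dB_v\rangle-\frac12\int_t^T|\bar\psi_\pi|^2dv\bigr)$, exactly as in the proof of Proposition~\ref{proposition:aggregate-value}. Then $\widetilde B=B-\int\bar\psi_\pi\,dv$ is a $\widetilde{\mathbb Q}^\pi$-Brownian motion, and $X^{t,x}$ is a weak solution of \eqref{equation:aggregate-state}. Applying Itô's formula to $e^{-\beta(v-s)}Y_v$ gives
\[
Y_s=e^{-\beta(T-s)}g(X_T)+\int_s^Te^{-\beta(v-s)}R_\pi(v,X_v)\,dv-\int_s^Te^{-\beta(v-s)}\langle Z_v,d\widetilde B_v\rangle .
\]
The stochastic integral is a true $\widetilde{\mathbb Q}^\pi$-martingale: the density has all moments, and $Z\in\mathcal H^p$ for all $p$, so Hölder and BDG apply. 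Taking conditional expectations therefore yields $Y_s=\mathbb E^{\widetilde{\mathbb Q}^\pi}[\,\cdots\mid\mathcal F^B_s]$.

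The main step is to show that this conditional expectation equals $J^\pi(s,X_s)$. I would obtain this from the Markov property of the weak solution, which in turn follows from uniqueness in law (Proposition~\ref{proposition:aggregate-value}). Concretely, for $s>t$ the conditional law under $\widetilde{\mathbb Q}^\pi$ of $(X_v)_{v\ge s}$ given $\mathcal F^B_s$ is the law of $\bar X^{s,X_s,\pi}$. This can be argued via the regular conditional probability, which solves the martingale problem from $(s,X_s)$, together with well-posedness of that martingale problem. Alternatively, it can be done directly at the reference level: $X_v=X^{s,X_s}_v$ by the flow property of the strong reference SDE, and the density factorizes as $\mathcal E_{t,s}\mathcal E_{s,T}$, where $\mathcal E_{s,T}$ is a functional of $(X^{s,y},B_\cdot-B_s)|_{y=X_s}$ independent of $\mathcal F^B_s$. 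By the freezing lemma and Bayes' formula, the conditional expectation is $\Phi(s,X_s)$, where $\Phi(s,y)=\mathbb E^{\mathbb Q}[\mathcal E_{s,T}(\cdots)]$ computed for the reference diffusion started at $(s,y)$. By the Girsanov construction in Proposition~\ref{proposition:aggregate-value}, $\Phi(s,y)=J^\pi(s,y)$. I prefer this second route, since it avoids martingale problems.

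Borel measurability of $J^\pi$ follows from the same representation. The map $y\mapsto\mathbb E^{\mathbb Q}[\mathcal E_{s,T}F(X^{s,y})]$ is jointly measurable in $(s,y)$ by measurability of the strong solution flow, and a monotone class argument starting from bounded continuous $F$ extends this. Finally, at $s=t$, $\mathcal F^B_t$ is trivial, so $Y_t=J^\pi(t,x)$ is deterministic. Independence from the Brownian space holds because $(X,B)$ has a canonical law, and $J^\pi$ is defined intrinsically.
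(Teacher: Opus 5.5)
Your argument is correct in substance, and it follows the paper's route for well-posedness and for the conditional Feynman--Kac formula under $\widetilde{\mathbb Q}^{\,\pi}$. The paper also uses the Briand--Delyon--Hu--Pardoux--Stoica $L^p$ theory, the Girsanov change of measure, It\^o's formula for $e^{-\beta(s-t)}Y_s$, and the integrability transfer.

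\textbf{Where the routes differ.} The difference is in passing from the conditional expectation to $J^\pi(s,X_s)$. The paper never computes that conditional expectation for $s>t$. It invokes the Markov property of BSDEs \cite[Theorem~4.1]{ElKarouiPengQuenez1997}, which yields a Borel $u_\pi$ with $Y^{t,x,\pi}_s=u_\pi(s,X^{t,x}_s)$ for every initial condition. It then needs only the case $s=t$: the integrand and the density are functionals of $(B_v-B_t)_{v\geq t}$, hence independent of $\mathcal F^B_t$, and uniqueness in law gives $u_\pi=J^\pi$ everywhere. Borel measurability of $J^\pi$ and \eqref{equation:value-representation} thus come out together. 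The same citation also supplies the Markovian integrand $z_\pi$ used immediately afterwards, which your argument does not provide.

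You instead prove the Markov property by hand, using the flow property of the strong reference equation, the factorization $\mathcal E_{t,T}=\mathcal E_{t,s}\mathcal E_{s,T}$, Bayes' formula and the freezing lemma. This is more elementary and makes the role of the linear driver explicit.

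\textbf{Bookkeeping you take on.} Your route shifts onto you work the citation hides.
\begin{enumerate}[(a)]
\item You need a version of $y\mapsto\int_s^T\langle\bar\psi_\pi(v,X^{s,y}_v),dB_v\rangle$ that is jointly measurable in $(y,\omega)$ and compatible with the substitution $y=X_s$. This is not automatic, because $\bar\psi_\pi$ is only Borel, so no continuity in $y$ is available for free.
\item For joint Borel measurability of $(s,y)\mapsto J^\pi(s,y)$, a monotone class argument on a path functional $F(X^{s,y})$ is not enough. The Borel function $\bar\psi_\pi$ sits inside the stochastic exponential, which is not such a functional. The approximation has to be run on the coefficients $(\bar\psi_\pi,R_\pi,g)$: continuity in $(s,y)$ for continuous coefficients, then closure under bounded pointwise limits, which preserve the expectation for each fixed $(s,y)$.
\end{enumerate}
Both points are routine, but they should be written out.

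\textbf{A slip to correct.} $\mathcal F^B_t$ is not trivial when $t>0$, since $B$ is a Brownian motion on $[0,T]$; for instance, $B^{\mu,G}=W$ on $[0,t)$. That $Y_t^{t,x,\pi}$ is deterministic follows instead from your own freezing argument at $s=t$, where $X_t=x$. Equivalently, it follows from the independence of the increments after $t$, exactly as in the paper.
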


\begin{proof}
We divide the proof into three steps.

\medskip
\noindent
\emph{Step 1: existence and uniqueness of $(Y^{t,x,\pi},Z^{t,x,\pi})$, and moment estimates.}\\
Fix $p>1$ and set $F_s^\pi(y,z)
    \coloneqq
    f_\pi(s,X_s^{t,x},y,z)$. Since $X^{t,x}$ is continuous and $\mathbb F^B$-adapted and, by Lemma~\ref{lemma:policy-driver}, $f_\pi$ is Borel measurable, the process $F^\pi(y,z)$ is
$\mathbb F^B$-progressively measurable for every $(y,z)$. We now verify the assumptions of
\cite[Theorem~4.2]{BriandDelyonHuPardouxStoica2003}. By Assumption~\ref{assumption:coefficients} and the moment estimates for the reference diffusion,
\[
    \mathbb E^{\mathbb Q}\left[|g(X_T^{t,x})|^p
    \right]\leq C_p(1+|x|^{mp}).
\]
Moreover, by \eqref{equation:policy-driver-growth},
\begin{align*}
    \mathbb E^{\mathbb Q}
    \left[\left(\int_t^T|F_s^\pi(0,0)|\,ds\right)^p
    \right]\leq \widetilde C_{\pi,p}\mathbb E^{\mathbb Q}
    \left[\left(\int_t^T(1+|X_s^{t,x}|^{m_\pi})\,ds
    \right)^p\right]\leq \widetilde C_{\pi,p}(1+|x|^{\widetilde m_{\pi,p}}),
\end{align*}
so condition \textnormal{(H1)} holds.

\noindent Conditions \textnormal{(H2)--(H4)} follow from
Lemma~\ref{lemma:policy-driver}. Finally, for every $R>0$,
\[
    \sup_{|y|\leq R}|F_s^\pi(y,0)-F_s^\pi(0,0)|
    =\beta R,
\]
and hence condition \textnormal{(H5)} is satisfied.

\noindent Therefore, by
\cite[Theorem~4.2]{BriandDelyonHuPardouxStoica2003},
\eqref{equation:policy-bsde} admits a unique solution in $\mathcal S^p_{t,T}(\mathbb R;\mathbb Q,\mathbb F^B)\times\mathcal H^p_{t,T}(\mathbb R^{n};\mathbb Q,\mathbb F^B)$. The a priori estimate of \cite[Proposition~3.2]{BriandDelyonHuPardouxStoica2003}, together with the preceding bounds, yields
\eqref{equation:bsde-moments}.

\medskip
\noindent
\emph{Step 2: conditional representation of $J^\pi$.}\\
Since $|\bar\psi_\pi(t,x)|\leq \|\psi\|_\infty$, Novikov's condition is satisfied. We may therefore define an equivalent probability measure $\widetilde{\mathbb Q}^{\,\pi}$ by
\begin{equation*}
    \frac{d\widetilde{\mathbb Q}^{\,\pi}}{d\mathbb Q}=\exp\left(\int_t^T\langle\bar\psi_\pi(v,X_v^{t,x}),dB_v\rangle-\frac12\int_t^T |\bar\psi_\pi(v,X_v^{t,x})|^2\,dv
    \right).
\end{equation*}
Under $\widetilde{\mathbb Q}^{\,\pi}$, the process
\begin{equation} \label{equation:policy-brownian-motion}
    \widetilde B_s\coloneqq B_s-\int_t^s
    \bar\psi_\pi(v,X_v^{t,x})\,dv,\qquad
    s\in[t,T],
\end{equation}
is a Brownian motion. Since the reference process satisfies
\[
    dX_s^{t,x}=b_0(s,X_s^{t,x})\,ds+\sigma(s,X_s^{t,x})\,dB_s,
\]
substituting $dB_s=d\widetilde B_s+\bar\psi_\pi(s,X_s^{t,x})\,ds$ gives
\[
    dX_s^{t,x}=\Big[b_0(s,X_s^{t,x})+
    \sigma(s,X_s^{t,x})\bar\psi_\pi(s,X_s^{t,x})
    \Big]ds+ \sigma(s,X_s^{t,x})\,d\widetilde B_s.
\]
Thus, under $\widetilde{\mathbb Q}^{\,\pi}$,
$X^{t,x}$ has precisely the exploratory dynamics
\eqref{equation:aggregate-state} associated with $\pi$.

\noindent On the other hand, writing \eqref{equation:policy-bsde} in differential form gives
\[
    dY_s^{t,x,\pi}=-f_\pi\bigl(s,X_s^{t,x},
    Y_s^{t,x,\pi},Z_s^{t,x,\pi}\bigr)\,ds+
    \langle Z_s^{t,x,\pi},dB_s\rangle.
\]
Using \eqref{equation:policy-driver} and
\eqref{equation:policy-brownian-motion}, the terms involving
$\bar\psi_\pi$ cancel and we obtain
\[
    dY_s^{t,x,\pi}
    =
    \bigl[
        \beta Y_s^{t,x,\pi}
        -
        R_\pi(s,X_s^{t,x})
    \bigr]\,ds
    +
    \langle
        Z_s^{t,x,\pi},
        d\widetilde B_s
    \rangle.
\]
The moment estimate \eqref{equation:bsde-moments}, together with the finite moments of the Girsanov density and its inverse, guarantees the required integrability under $\widetilde{\mathbb Q}^{\,\pi}$.
Applying It\^o's formula to $s\mapsto e^{-\beta(s-t)}Y_s^{t,x,\pi}$ and taking conditional
expectations therefore yields, for every $s\in[t,T]$,
\begin{equation}\label{equation:conditional-policy-value}
    Y_s^{t,x,\pi}=\mathbb E^{\widetilde{\mathbb Q}^{\,\pi}}\bigg[\int_s^T e^{-\beta(v-s)}
    R_\pi(v,X_v^{t,x})\,dv+e^{-\beta(T-s)} g(X_T^{t,x})\,\Big|\,\mathcal F_s^B \bigg].
\end{equation}

\medskip\noindent\emph{Step 3: identification with the policy value.}\\
We first take $s=t$ in \eqref{equation:conditional-policy-value}. The random variable inside the conditional expectation is a functional of the increments $(B_v-B_t)_{t\leq v\leq T}$, and the same holds for the density of $\widetilde{\mathbb Q}^{\,\pi}$ with respect to $\mathbb Q$. This random variable is therefore independent of $\mathcal F_t^B$ under $\widetilde{\mathbb Q}^{\,\pi}$. Since $(X^{t,x},\widetilde B)$ is a weak solution of \eqref{equation:aggregate-state} under $\widetilde{\mathbb Q}^{\,\pi}$, we obtain
\[
    Y_t^{t,x,\pi}
    =
    \mathbb E^{\widetilde{\mathbb Q}^{\,\pi}}\bigg[\int_t^T e^{-\beta(v-t)}
    R_\pi(v,X_v^{t,x})\,dv+e^{-\beta(T-t)} g(X_T^{t,x})\bigg]
    =
    J^\pi(t,x).
\]
It remains to prove \eqref{equation:value-representation} for $s>t$. The driver $f_\pi$ is Borel measurable and uniformly Lipschitz continuous in $(y,z)$, while $f_\pi(\cdot,\cdot,0,0)$ and $g$ have polynomial growth. By the Markov property of solutions of BSDEs \cite[Theorem~4.1]{ElKarouiPengQuenez1997}, there exists a Borel measurable function $u_\pi:[0,T]\times\mathbb R^d\to\mathbb R$ such that, for every initial condition $(t,x)$,
\[
    Y_s^{t,x,\pi}=u_\pi(s,X_s^{t,x}),
    \qquad \mathbb Q\text{-a.s.},\; s\in[t,T].
\]
Taking $s=t$ and using the first part of this step, we obtain $u_\pi(t,x)=Y_t^{t,x,\pi}=J^\pi(t,x)$ for every $(t,x)\in[0,T]\times\mathbb R^d$. Hence $J^\pi=u_\pi$ is Borel measurable and \eqref{equation:value-representation} holds.
\end{proof}

\noindent In particular, the representation above may be applied on the
reference space $(\mathbb Q,B)=(\mathbb Q^{\mu,G},B^{\mu,G})$ associated with any behavior policy $\mu\in\Pi$ and sampling grid
$G$. By Section~\ref{section:reference-dynamics}, the corresponding reference diffusion is $X^{t,x,\mu,G}$. Hence
\begin{equation}\label{equation:observed-value-representation}
    Y_s^{t,x,\pi}
    =
    J^\pi(s,X_s^{t,x,\mu,G}),
    \qquad \mathbb Q^{\mu,G}\text{-a.s.},\;
    s\in[t,T].
\end{equation}

\medskip\noindent Moreover, \eqref{equation:density-moments},
\eqref{equation:bsde-moments}, and H\"older's inequality transfer
the BSDE estimates from $\mathbb Q^{\mu,G}$ to $\mathbb P$. Consequently, for every $p>1$, $(Y^{t,x,\pi},Z^{t,x,\pi})
    \in
    \mathcal S^p_{t,T}
    (\mathbb R;\mathbb P,\mathbb F^G)
    \times
    \mathcal H^p_{t,T}
    (\mathbb R^{n};\mathbb P,\mathbb F^G)$.

\subsection{Continuity of policy values}
\label{section:continuity-of-values}

The coefficients $\bar\psi_\pi$ and $R_\pi$ of the policy driver, as well as the terminal reward $g$, are merely Borel measurable. Nevertheless, the policy value $J^\pi$ is continuous. This regularizing effect is due to the nondegeneracy of $\sigma$. It will be used in Sections~\ref{section:policy-evaluation} and~\ref{section:optimal-value-and-q} to extend to every time and state identities that are first obtained almost everywhere. Since the same property is needed for the optimal BSDE of Section~\ref{section:optimal-value-and-q}, we establish it for a general linear driver.

\medskip\noindent Throughout this subsection, $(\mathbb Q,B)$ is a Brownian probability space as in Section~\ref{section:reference-dynamics}. We extend the reference diffusion to the whole interval $[0,T]$ by setting $X_s^{t,x}\coloneqq x$ for $s\in[0,t]$, and we write $\mathcal O_R\coloneqq\{y\in\mathbb R^d:|y|<R\}$ for $R>0$. We first recall two classical estimates. By \eqref{equation:coefficient-lipschitz} and \eqref{equation:coefficient-growth}, for every $p\geq2$ there exists $C_p>0$ such that, for all $(t,x),(t',x')\in[0,T]\times\mathbb R^d$,
\begin{equation}\label{equation:flow-estimate}
\begin{aligned}
    \mathbb E^{\mathbb Q}\bigg[\sup_{0\leq s\leq T}|X_s^{t,x}|^p\bigg]
    &\leq C_p(1+|x|^p),\\
    \mathbb E^{\mathbb Q}\bigg[\sup_{0\leq s\leq T}
    \bigl|X_s^{t,x}-X_s^{t',x'}\bigr|^2\bigg]
    &\leq C_2\bigl(1+|x|^2+|x'|^2\bigr)
    \bigl(|x-x'|^2+|t-t'|\bigr).
\end{aligned}
\end{equation}
Furthermore, we recall the following estimate for nondegenerate It\^o processes with measurable coefficients \cite[Theorem 4 of Section 2, Chapter~2]{Krylov1980}. For $R>0$, let
\[
    \tau_R^{t,x}\coloneqq\inf\{s\geq t:|X_s^{t,x}|\geq R\}\wedge T.
\]
Up to time $\tau_R^{t,x}$, the drift and the diffusion coefficient of the reference diffusion are bounded and, by \eqref{equation:ellipticity}, its diffusion matrix is uniformly positive definite. Hence there exists a constant $K_R>0$, depending only on $d$, $T$, $R$, $\lambda_R$, and the constant in \eqref{equation:coefficient-growth}, such that, for every initial condition $(t,x)$ and every Borel measurable function $\phi:[0,T]\times\mathbb R^d\to[0,+\infty]$,
\begin{equation}\label{equation:krylov-estimate}
    \mathbb E^{\mathbb Q}\bigg[\int_t^{\tau_R^{t,x}}\phi(s,X_s^{t,x})\,ds\bigg]
    \leq K_R\,\|\phi\|_{L^{d+1}([0,T]\times\mathcal O_R)}.
\end{equation}
The uniformity of this bound with respect to the initial condition is the key ingredient of the following lemma.

\begin{lemma}\label{lemma:path-functionals}
Let $\vartheta$ be a Borel measurable function on $[0,T]\times\mathbb R^d$, with values in a Euclidean space and with polynomial growth. If $(t_j,x_j)\to(t,x)$ in $[0,T]\times\mathbb R^d$, then, for every $p\geq1$,
\[
    \mathbb E^{\mathbb Q}\left[\left(\int_0^T
    \Bigl|\vartheta(s,X_s^{t_j,x_j})\mathbbm 1_{[t_j,T]}(s)
    -\vartheta(s,X_s^{t,x})\mathbbm 1_{[t,T]}(s)\Bigr|^2ds\right)^p\right]
    \longrightarrow0.
\]
\end{lemma}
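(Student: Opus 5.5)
The plan is to reduce to the case where $\vartheta$ is bounded, continuous and compactly supported, using Krylov's estimate \eqref{equation:krylov-estimate} and the moment bounds in \eqref{equation:flow-estimate}. Write $\Delta_j(s)\coloneqq\vartheta(s,X_s^{t_j,x_j})\mathbbm 1_{[t_j,T]}(s)-\vartheta(s,X_s^{t,x})\mathbbm 1_{[t,T]}(s)$.

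\emph{Step 1: uniform integrability.} By polynomial growth of $\vartheta$ and \eqref{equation:flow-estimate}, the quantity $\sup_j\mathbb E^{\mathbb Q}[(\int_0^T|\Delta_j(s)|^2ds)^{2p}]$ is finite, since $x_j$ stays bounded. The variables $(\int_0^T|\Delta_j|^2ds)^p$ are therefore uniformly integrable. It thus suffices to show $\int_0^T|\Delta_j(s)|^2ds\to0$ in probability, or in $L^1(\mathbb Q)$ after a truncation of $\vartheta$.

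\emph{Step 2: localization and truncation.} Fix $R>0$ larger than $\sup_j|x_j|$. For $M>0$, set $\vartheta_{R,M}\coloneqq\vartheta\,\mathbbm 1_{\mathcal O_R}$, with components truncated at level $M$. The event $\{\tau_R^{t_j,x_j}<T\}$ has probability at most $C(1+|x_j|^2)/R^2$, uniformly in $j$, by \eqref{equation:flow-estimate} and Markov's inequality. The polynomial growth bound controls $\vartheta$ on the set where $|\vartheta|>M$ but $|X|<R$. Combined with Step 1, this reduces the problem, up to an error that is small uniformly in $j$ as $R\to\infty$, to proving $\mathbb E^{\mathbb Q}[\int_0^{T}|\Delta_j^{R}(s)|\,ds]\to0$. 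Here $\Delta_j^R$ is built from a bounded Borel function $\phi$ supported in $[0,T]\times\overline{\mathcal O_R}$, and the trajectories are stopped at the exit times $\tau_R$. In the bounded setting, convergence in $L^1$ of the $ds$-integral is equivalent to convergence of the squared integral.

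\emph{Step 3: approximation by continuous functions; this is the main obstacle.} Choose $\phi_\varepsilon\in C_c([0,T]\times\mathbb R^d)$ with $|\phi_\varepsilon|\le\|\phi\|_\infty$ and $\|\phi-\phi_\varepsilon\|_{L^{d+1}([0,T]\times\mathcal O_R)}<\varepsilon$. Krylov's estimate \eqref{equation:krylov-estimate} then gives
\[
\mathbb E^{\mathbb Q}\bigg[\int_{t_j}^{\tau_R^{t_j,x_j}}|\phi-\phi_\varepsilon|(s,X_s^{t_j,x_j})\,ds\bigg]\le K_R\,\varepsilon,
\]
uniformly in $j$, and the same bound holds for $(t,x)$. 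This uniformity with respect to the initial condition is exactly what makes the measurable case work. For the continuous $\phi_\varepsilon$, the term $\int_0^T|\phi_\varepsilon(s,X_s^{t_j,x_j})\mathbbm 1_{[t_j,T]}-\phi_\varepsilon(s,X_s^{t,x})\mathbbm 1_{[t,T]}|\,ds$ tends to zero in probability. This follows from the flow estimate \eqref{equation:flow-estimate}: since $\sup_s|X^{t_j,x_j}_s-X^{t,x}_s|\to0$ in $L^2$, uniform continuity of $\phi_\varepsilon$ applies. The indicator mismatch is bounded by $\|\phi\|_\infty|t_j-t|$. Bounded convergence then yields convergence in expectation. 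Letting first $j\to\infty$, then $\varepsilon\to0$, then $M,R\to\infty$ concludes the argument.

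One care point is that Krylov's bound controls integrals only up to $\tau_R$. The contribution after $\tau_R$ must therefore be absorbed into the localization error of Step 2, via the set $\{\tau_R<T\}$, before the approximation of Step 3 is applied.
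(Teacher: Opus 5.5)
Your proposal is correct and follows essentially the same route as the paper: uniform integrability to reduce to convergence in probability, localization on the event that both paths stay in $\mathcal O_R$, approximation by a bounded uniformly continuous function in an $L^{d+1}$-type norm on $[0,T]\times\mathcal O_R$, Krylov's estimate (uniform in the initial condition) to control the approximation error, and the flow estimate for the continuous part. The differences are cosmetic. Your truncation at level $M$ is superfluous, since polynomial growth already makes $\vartheta$ bounded on $[0,T]\times\mathcal O_R$. You also work with $\int|\Delta_j|\,ds$ and an $L^{d+1}$ approximation, whereas the paper applies Krylov directly to $|\vartheta-\vartheta^\varepsilon|^2$ with an $L^{2(d+1)}$ approximation.
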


\begin{proof}
Write $X^j\coloneqq X^{t_j,x_j}$ and $X\coloneqq X^{t,x}$, and 
\[
    D_j\coloneqq\int_0^T
    \Bigl|\vartheta(s,X_s^{j})\mathbbm 1_{[t_j,T]}(s)
    -\vartheta(s,X_s)\mathbbm 1_{[t,T]}(s)\Bigr|^2ds.
\]
By the polynomial growth of $\vartheta$ and the first estimate in \eqref{equation:flow-estimate}, the sequence $(D_j)_j$ is bounded in $L^{2p}(\mathbb Q)$ for every $p\geq 1$. It therefore suffices to prove that $D_j\to0$ in probability.

\noindent Let $\varepsilon>0$ and fix $R>\max\{|x|,\sup_j|x_j|\}$. Since $\vartheta$ is bounded on $[0,T]\times\mathcal O_R$, there exists a bounded and uniformly continuous function $\vartheta^\varepsilon$ on $[0,T]\times\mathbb R^d$ such that $\|\vartheta-\vartheta^\varepsilon\|_{L^{2(d+1)}([0,T]\times\mathcal O_R)}
    \leq\varepsilon$ (see, for instance, \cite[Theorem 3.14]{Rudin}).
Now, consider the event $\Omega_R^j\coloneqq
    \big\{\sup_{0\leq s\leq T}|X_s^j|<R\big\}
    \cap
    \big\{\sup_{0\leq s\leq T}|X_s|<R\big\}$. On such an event it holds that $\tau_R^{t_j,x_j}=\tau_R^{t,x}=T$ and, in turn,
\begin{equation}\label{equation:D-j}
    D_j\leq3\left[
    \int_{t_j}^{\tau_R^{t_j,x_j}}|\vartheta-\vartheta^\varepsilon|^2(s,X_s^j)\,ds
    +\int_t^{\tau_R^{t,x}}|\vartheta-\vartheta^\varepsilon|^2(s,X_s)\,ds
    +E_j^\varepsilon\right],
\end{equation}
where
\begin{align*}
    E_j^\varepsilon
    &\coloneqq\int_0^T
    \Bigl|\vartheta^\varepsilon(s,X_s^j)\mathbbm 1_{[t_j,T]}(s)
    -\vartheta^\varepsilon(s,X_s)\mathbbm 1_{[t,T]}(s)\Bigr|^2ds\\
    &\leq2\|\vartheta^\varepsilon\|_\infty^2|t_j-t|
    +2\int_0^T\bigl|\vartheta^\varepsilon(s,X_s^j)-\vartheta^\varepsilon(s,X_s)\bigr|^2ds.
\end{align*}
By \eqref{equation:krylov-estimate}, applied with $\phi=|\vartheta-\vartheta^\varepsilon|^2$, for which $\|\phi\|_{L^{d+1}([0,T]\times\mathcal O_R)}=\|\vartheta-\vartheta^\varepsilon\|^2_{L^{2(d+1)}([0,T]\times\mathcal O_R)}\leq\varepsilon^2$, each of the first two integrals in \eqref{equation:D-j} has expectation at most $K_R\varepsilon^2$. By \eqref{equation:flow-estimate}, $\sup_{0\leq s\leq T}|X_s^j-X_s|$ vanishes in probability; thus, since $\vartheta^\varepsilon$ is bounded and uniformly continuous, $E_j^\varepsilon\to0$ in probability as $j\to\infty$. Moreover, by \eqref{equation:flow-estimate} and Markov's inequality,
\begin{equation}\label{equation:omega-R}
    \mathbb Q\bigl((\Omega_R^j)^c\bigr)\le\frac{1}{R^2}
    \bigg(\mathbb E^{\mathbb Q}\Big[\sup_{0\le s\le T}|X_s^j|^2\Big]+\mathbb E^{\mathbb Q}\Big[\sup_{0\le
    s\le T}|X_s|^2\Big]\bigg)\le \frac{C}{R^2},
\end{equation}
where $C$ is independent of $j$ and $R$, since $(x_j)_j$ is bounded.

\noindent Fix $\eta>0$. On $\Omega_R^j$, inequality \eqref{equation:D-j} implies that $D_j\le 9\eta$ whenever each of the three terms in brackets is at most $\eta$. Therefore, the union bound and Markov's inequality,
together with \eqref{equation:omega-R}, yield
\[
    \mathbb Q(D_j>9\eta)\le\frac{C}{R^2}
    +\frac{2K_R\varepsilon^2}{\eta}
    +\mathbb Q(E_j^\varepsilon>\eta).
\]
For fixed $R$ and $\varepsilon$, the convergence of
$E_j^\varepsilon$ to zero in probability gives
\[
    \limsup_{j\to\infty}\mathbb Q(D_j>9\eta)\le\frac{C}{R^2}
    +\frac{2K_R\varepsilon^2}{\eta}.
\]
Letting first $\varepsilon\downarrow0$ and then $R\uparrow+\infty$ concludes the proof.
\end{proof}

\begin{proposition}\label{proposition:continuity}
Let $\zeta:[0,T]\times\mathbb R^d\to\mathbb R^n$ be bounded and Borel measurable, and let $h:[0,T]\times\mathbb R^d\to\mathbb R$ be Borel measurable with polynomial growth. For $(t,x)\in[0,T]\times\mathbb R^d$, let $(Y^{t,x},Z^{t,x})$ be the solution of
\begin{equation}\label{equation:linear-bsde}
    Y_s=g(X_T^{t,x})+\int_s^T
    \Bigl[\langle Z_v,\zeta(v,X_v^{t,x})\rangle+h(v,X_v^{t,x})-\beta Y_v\Bigr]\,dv
    -\int_s^T\langle Z_v,dB_v\rangle,
    \qquad s\in[t,T],
\end{equation}
which is unique in $\mathcal S^p_{t,T}(\mathbb R;\mathbb Q,\mathbb F^B)\times\mathcal H^p_{t,T}(\mathbb R^n;\mathbb Q,\mathbb F^B)$ for every $p>1$, and set $w(t,x)\coloneqq Y_t^{t,x}$. Then $w$ is a deterministic Borel measurable function with polynomial growth, $w(T,\cdot)=g$, and
\begin{equation}\label{equation:linear-markov}
    Y_s^{t,x}=w(s,X_s^{t,x}),
    \qquad \mathbb Q\text{-a.s.},\; s\in[t,T].
\end{equation}
Moreover, $w$ is continuous on $[0,T)\times\mathbb R^d$, and on $[0,T]\times\mathbb R^d$ if $g$ is continuous. In particular, for every $\pi\in\Pi$, the policy value $J^\pi$ is continuous on $[0,T)\times\mathbb R^d$, and on $[0,T]\times\mathbb R^d$ if $g$ is continuous.
\end{proposition}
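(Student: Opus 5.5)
The plan has three parts: first the Markov structure (measurability, growth, the representation \eqref{equation:linear-markov}), then continuity before the horizon, then continuity up to $T$ when $g$ is continuous.

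\textbf{Markov structure.} Existence and uniqueness, polynomial growth and \eqref{equation:linear-markov} will be obtained exactly as in Proposition~\ref{proposition:bsde-value}. The driver $(v,x,y,z)\mapsto\langle z,\zeta(v,x)\rangle+h(v,x)-\beta y$ has the same affine structure as $f_\pi$, and nothing else was used there.
\begin{itemize}
\item \cite[Theorem~4.2]{BriandDelyonHuPardouxStoica2003} gives the unique solution together with the moment bound of type \eqref{equation:bsde-moments}.
\item The Girsanov argument with density built from $\zeta(v,X_v^{t,x})$ makes $Y_t^{t,x}$ deterministic and equal to an expectation, which yields the polynomial growth.
\item \cite[Theorem~4.1]{ElKarouiPengQuenez1997} supplies a Borel $w$ satisfying \eqref{equation:linear-markov}, and $w(T,\cdot)=g$ is immediate.
\end{itemize}
The claim about $J^\pi$ then follows by taking $\zeta=\bar\psi_\pi$, $h=R_\pi$ and invoking \eqref{equation:value-representation}.

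\textbf{Continuity on $[0,T)\times\mathbb R^d$.} Fix $(t_j,x_j)\to(t,x)$ with $t<T$, and pick $\delta>0$ with $t,t_j<T-\delta$ for large $j$. I will not compare the two BSDEs over all of $[t,T]$, since the terminal values $g(X_T^{j})$ and $g(X_T)$ need not be close when $g$ is discontinuous. Instead I split at $T-\delta$.

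By \eqref{equation:linear-markov} and the flow property, each solution restricted to $[T-\delta,T]$ solves the same BSDE started from $w(T-\delta,X^j_{T-\delta})$. So on $[t_j\wedge t,\,T-\delta]$ we compare two BSDEs with Lipschitz affine driver and terminal values $w(T-\delta,X^j_{T-\delta})$ and $w(T-\delta,X_{T-\delta})$. Extending the solutions by constants before $t_j$ and $t$, the standard stability estimate bounds $|w(t_j,x_j)-w(t,x)|^2$ by three terms:
\begin{itemize}
\item the $L^2$-distance between the terminal values $w(T-\delta,X^j_{T-\delta})$ and $w(T-\delta,X_{T-\delta})$;
\item the driver difference integrated in the $L^2$ sense of Lemma~\ref{lemma:path-functionals}, applied to $\vartheta=h$ and to $\vartheta=\zeta$ (the $\zeta$ term being multiplied by $Z$, handled by H\"older's inequality and the $\mathcal H^p$ bounds);
\item a small boundary term of order $|t_j-t|$.
\end{itemize}
Lemma~\ref{lemma:path-functionals} makes the second term vanish.

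\textbf{Main obstacle: the terminal term.} The remaining difficulty is $w(T-\delta,X^j_{T-\delta})\to w(T-\delta,X_{T-\delta})$ in $L^2$, where $w(T-\delta,\cdot)$ is merely measurable. Here I use nondegeneracy. I approximate $w(T-\delta,\cdot)$ on the ball $\mathcal O_R$ by a continuous function in $L^{q}$, and control
\[
\mathbb E\bigl[|w-\tilde w|(T-\delta,X^j_{T-\delta})\mathbbm 1_{\{|X^j_{T-\delta}|<R\}}\bigr]
\]
uniformly in $j$ by a bound on the density of $X^{t_j,x_j}_{T-\delta}$, uniform over nearby initial conditions. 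Such a bound comes from the continuity and local boundedness of transition densities in \cite{BogachevRocknerShaposhnikov2009}, as used for Proposition~\ref{proposition:densities}.

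A cleaner alternative avoids densities altogether: integrate over times $s\in[T-2\delta,T-\delta]$ instead of using a fixed time. Use $Y_t=\mathbb E[e^{-\beta(s-t)}Y_s+\int_t^s\cdots]$ under the Girsanov measure, average in $s$ over $[T-2\delta,T-\delta]$, and apply the Krylov estimate \eqref{equation:krylov-estimate} to the space–time function $|w-\tilde w|$. This reduces everything to the same approximation argument as in Lemma~\ref{lemma:path-functionals}. I would first try this averaged version. The Girsanov density is handled by H\"older's inequality and \eqref{equation:density-moments}-type bounds, and the tails $|X|\geq R$ by the moments in \eqref{equation:flow-estimate}.

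\textbf{Continuity up to $T$.} If $g$ is continuous, no splitting is needed. The terminal term $g(X^j_T)\to g(X_T)$ in $L^2$ by \eqref{equation:flow-estimate}, continuity of $g$ and uniform integrability from polynomial growth. Lemma~\ref{lemma:path-functionals} handles the drivers, and the stability estimate gives continuity on $[0,T]\times\mathbb R^d$, including at $t=T$.
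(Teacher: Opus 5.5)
Your proposal is correct in substance. Its decisive idea is exactly Step~3 of the paper's proof: average the intermediate time over $[T-2\delta,T-\delta]$, so that the merely measurable $w$ enters only as a space--time integrand along the paths, where the Krylov estimate \eqref{equation:krylov-estimate} acts through Lemma~\ref{lemma:path-functionals}. The framing differs. The paper never compares two BSDEs. It uses linearity to write
\[
w(t,x)=\mathbb E^{\mathbb Q}\bigl[\mathcal E_T^{t,x}\int_t^Te^{-\beta(s-t)}h_\delta(s,X_s^{t,x})\,ds\bigr],
\]
so continuity reduces to continuity of one explicit functional. It must then prove separately (Step~2) that $\mathcal E_T^{t_j,x_j}\to\mathcal E_T^{t,x}$ in $L^2(\mathbb Q)$, using Lemma~\ref{lemma:path-functionals} applied to $\zeta$, the It\^o isometry, and Vitali's theorem; Step~2 is then reused verbatim at the horizon. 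You replace this by a standard a priori estimate for BSDEs with uniformly Lipschitz drivers. Your treatment of $\langle Z,\Delta\zeta\rangle$ by H\"older and the $\mathcal H^p$ bounds is fine. The price is the bookkeeping for different initial times (extension by constants). The gain is that, for continuous $g$, continuity on all of $[0,T]\times\mathbb R^d$ comes in one stroke.

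There are two caveats on the terminal term. Your option (a) is not supported by anything proved in the paper. Proposition~\ref{proposition:densities} concerns a single initial condition and gives no bound on the density of $X^{t_j,x_j}_{T-\delta}$ uniform in $j$, and that uniform bound is exactly what you need. Your option (b), carried out literally under the Girsanov measure, is the paper's route. There, H\"older's inequality and moment bounds on $\mathcal E_T$ only control the error terms; the main term needs the $L^2$ convergence of the densities mentioned above, which you do not state.

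The cleanest way to realize your plan is to stay inside your stability estimate and average the splitting time itself. The stability constant depends only on $\beta$, $\|\zeta\|_\infty$ and $T$. So for every $\tau\in[T-2\delta,T-\delta]$,
\[
|w(t_j,x_j)-w(t,x)|^2\le C\,\mathbb E^{\mathbb Q}\bigl[|w(\tau,X^{t_j,x_j}_\tau)-w(\tau,X^{t,x}_\tau)|^2\bigr]+C\varepsilon_j .
\]
Here $\varepsilon_j\to0$ does not depend on $\tau$: the driver difference is evaluated along the full solution for $(t,x)$, so you can bound its integral over $[0,\tau]$ by its integral over $[0,T]$. Averaging in $\tau$ and applying Lemma~\ref{lemma:path-functionals} to $w\mathbbm 1_{[T-2\delta,T-\delta]}$ concludes the proof, with no Girsanov density at all.
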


\begin{proof}
\emph{Step 1: well-posedness and representation.}\\
Lemma~\ref{lemma:policy-driver} and the proof of Proposition~\ref{proposition:bsde-value}  apply with
$(\bar\psi_\pi,R_\pi)$ replaced by $(\zeta,h)$.
Thus \eqref{equation:linear-bsde} is well posed, $w$ is deterministic, Borel measurable, and of polynomial growth, and \eqref{equation:linear-markov} holds. In particular, $w(T,\cdot)=g$.

\noindent For $s\in[t,T]$, define
\[
\mathcal E_s^{t,x}\coloneqq\exp\left(
\int_t^s\langle\zeta(v,X_v^{t,x}),dB_v\rangle
-\frac12\int_t^s|\zeta(v,X_v^{t,x})|^2\,dv
\right).
\]
Since $\zeta$ is bounded, $\mathcal E^{t,x}$ is a strictly positive martingale. Define the probability measure $\widetilde {\mathbb Q}^{t,x}$ by $\frac{d\widetilde {\mathbb Q}^{t,x}}{d \mathbb Q}=\mathcal E_T^{t,x}$. By Girsanov's theorem
\[
    \left(e^{-\beta(s-t)}Y_s^{t,x}+\int_t^s e^{-\beta(v-t)}h(v,X_v^{t,x})\,dv\right)_{s\in[t,T]}
\]
is a martingale under $\widetilde {\mathbb Q}^{t,x}$.
Taking expectations and returning to $\mathbb Q$, we obtain, for every $s\in[t,T]$,
\begin{equation}\label{equation:linear-bsde-flow}
    w(t,x)=\mathbb E^{\mathbb Q}\left[\mathcal E_T^{t,x}\left(e^{-\beta(s-t)}w(s,X_s^{t,x})
    +\int_t^s e^{-\beta(v-t)}h(v,X_v^{t,x})\,dv
    \right)\right].
\end{equation}

\medskip\noindent
\emph{Step 2: continuity of the Girsanov densities.}\\
Let $(t_j,x_j)\to(t,x)$ in $[0,T]\times\mathbb R^d$, write $X^j\coloneqq X^{t_j,x_j}$ and $X\coloneqq X^{t,x}$ as in the proof of Lemma~\ref{lemma:path-functionals}, and set
\[
    \Gamma_s^j\coloneqq\zeta(s,X_s^{t_j,x_j})\mathbbm 1_{[t_j,T]}(s),\qquad\Gamma_s\coloneqq\zeta(s,X_s^{t,x})\mathbbm 1_{[t,T]}(s).
\]
By Lemma~\ref{lemma:path-functionals},
\[
    \mathbb E^{\mathbb Q}\left[\int_0^T|\Gamma_s^j-\Gamma_s|^2\,ds\right]\longrightarrow0.
\]
Combining the It\^o isometry with the Cauchy--Schwarz inequality
and the boundedness of $\zeta$, we obtain
\[
    \|\log\mathcal E_T^{t_j,x_j}
      -\log\mathcal E_T^{t,x}\|_{L^2(\mathbb Q)}
    \le
    \left(1+\|\zeta\|_\infty\sqrt T\right)
    \left(
        \mathbb E^{\mathbb Q}\left[
            \int_0^T|\Gamma_s^j-\Gamma_s|^2\,ds
        \right]
    \right)^{1/2}
    \longrightarrow0.
\]
By continuity of the exponential function,
$\mathcal E_T^{t_j,x_j}\to\mathcal E_T^{t,x}$ in probability.
Moreover, the boundedness of $\zeta$ gives, for every $p\ge1$,
\[
    \sup_{(t,x)\in[0,T]\times\mathbb R^d}
    \mathbb E^{\mathbb Q}\bigl[(\mathcal E_T^{t,x})^p\bigr]
    \le
    \exp\left(\frac{p(p-1)}2\|\zeta\|_\infty^2T\right).
\]
Taking $p=3$, we deduce that
$\bigl(|\mathcal E_T^{t_j,x_j}-\mathcal E_T^{t,x}|^2\bigr)_j$
is uniformly integrable. Vitali's convergence theorem therefore
yields
\[
    \mathcal E_T^{t_j,x_j}
    \longrightarrow
    \mathcal E_T^{t,x}
    \qquad\text{in }L^2(\mathbb Q).
\]

\medskip\noindent
\emph{Step 3: continuity before the horizon.}\\
We first consider an arbitrary Borel measurable function $\widetilde h:[0,T]\times\mathbb R^d\to\mathbb R$ of polynomial growth. Lemma~\ref{lemma:path-functionals}, applied to
$(s,y)\mapsto e^{-\beta s}\widetilde h(s,y)$, gives
\[
    \mathbb E^{\mathbb Q}\left[\int_0^T|e^{-\beta s}\widetilde h(s,X^j_s)\mathbbm 1_{[t_j,T]}(s)-e^{-\beta s}\widetilde h(s,X_s)\mathbbm 1_{[t,T]}(s)|^2\,ds
    \right]\longrightarrow0.
\]
By the Cauchy--Schwarz inequality, it follows that
\[
    \int_{t_j}^T e^{-\beta(s-t_j)}\widetilde h(s,X_s^j)\,ds\longrightarrow\int_t^T e^{-\beta(s-t)}\widetilde h(s,X_s)\,ds\qquad\text{in }L^2(\mathbb Q).
\]
Together with the convergence established in Step~2, this proves that the map
\[
    (t,x)\longmapsto\mathbb E^{\mathbb Q}\left[
    \mathcal E_T^{t,x}\int_t^T e^{-\beta(s-t)}
\widetilde h(s,X_s^{t,x})\,ds\right]
\]
is continuous on $[0,T]\times\mathbb R^d$.
We now express $w$ in this form by averaging
\eqref{equation:linear-bsde-flow} over the intermediate time. Fix $\delta\in(0,T/2)$ and let $t\in[0,T-2\delta)$. Integrating \eqref{equation:linear-bsde-flow} over $s\in[T-2\delta,T-\delta]$
and dividing by $\delta$, we obtain
\[
    w(t,x)=\mathbb E^{\mathbb Q}\left[\mathcal E_T^{t,x}\int_t^T e^{-\beta(s-t)}h_\delta(s,X_s^{t,x})\,ds\right],
\]
where
\[
    h_\delta(s,y)\coloneqq\frac1\delta\mathbbm 1_{[T-2\delta,T-\delta]}(s)w(s,y)+\min\left\{1,\frac{(T-\delta-s)^+}{\delta}\right\}h(s,y).
\]
For fixed $\delta$, the function $h_\delta$ is Borel measurable and of polynomial growth. Therefore $w$ is continuous on $[0,T-2\delta)\times\mathbb R^d$.
By the arbitrariness of $\delta$, $w$ is continuous on $[0,T)\times\mathbb R^d$.

\medskip\noindent
\emph{Step 4: continuity at the horizon.}\\
Assume that $g$ is continuous, and let $(t_j,x_j)\to(T,x)$.
By \eqref{equation:flow-estimate}, $X_T^{t_j,x_j}\longrightarrow x$ in probability, and the moments of $X_T^{t_j,x_j}$ of every order are bounded uniformly in $j$. The continuity and polynomial growth of $g$ therefore imply
\[
    g(X_T^{t_j,x_j})\longrightarrow g(x)
    \qquad\text{in }L^2(\mathbb Q).
\]
Moreover, Steps~2 and~3 give
\[
    \mathcal E_T^{t_j,x_j}\longrightarrow
    \mathcal E_T^{T,x}=1,\qquad\int_{t_j}^T e^{-\beta (s-t_j)}h(s,X^j_s)\,ds \longrightarrow\int_{T}^T e^{-\beta (s-T)}h(s,X_s)\,ds =0\qquad\text{in }L^2(\mathbb Q).
\]
Taking $s=T$ in \eqref{equation:linear-bsde-flow}, we obtain
\[
    w(t_j,x_j)=\mathbb E^{\mathbb Q}\left[\mathcal E_T^{t_j,x_j}
    \left(e^{-\beta(T-t_j)}g(X_T^{t_j,x_j})
    +\int_{t_j}^T e^{-\beta (s-t_j)}h(s,X^j_s)\,ds 
    \right)\right]\longrightarrow g(x)=w(T,x).
\]
\noindent  Finally, let $\pi\in\Pi$. For $(\zeta,h)=(\bar\psi_\pi,R_\pi)$, equation \eqref{equation:linear-bsde} is the policy equation \eqref{equation:policy-bsde}, and $w=J^\pi$ by Proposition~\ref{proposition:bsde-value}. This concludes the proof.
\end{proof}

\subsection{Definition and properties of the \texorpdfstring{$q$}{q}-function}

The integrand $Z^{t,x,\pi}$ in the BSDE is defined only up to $ds\otimes d\mathbb Q$-almost everywhere equality and, in particular, its value at the initial time cannot be used directly to define a
deterministic function of $(t,x)$. We therefore first identify a Markovian representative of the family $(Z^{t,x,\pi})_{(t,x)}$ along the reference diffusion.

\begin{proposition}
\label{proposition:markovian-integrand}
For every $\pi\in\Pi$, there exists a Borel measurable function
\[
    z_\pi:[0,T]\times\mathbb R^d\longrightarrow\mathbb R^{n}
\]
such that, for every $(t,x)\in[0,T]\times\mathbb R^d$,
\begin{equation}\label{equation:integrand-representation}
    Z_s^{t,x,\pi}= z_\pi(s,X_s^{t,x}),
    \qquad ds\otimes d\mathbb Q\text{-a.e. on }[t,T]\times\Omega.
\end{equation}
The function $z_\pi$ may be chosen independently of the Brownian probability space used to realize the reference diffusion and is unique up to $dt\otimes dx$-almost everywhere equality.
\end{proposition}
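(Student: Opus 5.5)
We will obtain $z_\pi$ from the Markov property of the BSDE combined with a quadratic-covariation argument, which is the standard route for the Markovian structure of the martingale integrand \cite[Theorem~4.1]{ElKarouiPengQuenez1997}. By Proposition~\ref{proposition:bsde-value}, $Y^{t,x,\pi}_s=J^\pi(s,X^{t,x}_s)$, so the continuous martingale part of the semimartingale $J^\pi(s,X^{t,x}_s)$ is $\int\langle Z^{t,x,\pi}_v,dB_v\rangle$. The idea is to recover $Z$ as a density of the covariation between $Y$ and $B$, computed through a Borel functional of the path of $X^{t,x}$ alone.

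\textbf{Construction.} Since $\sigma$ has rank $d\le n$, we will not simply invert $\sigma$. Instead, we proceed in three steps.

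\emph{Step 1.} First we note that $Z^{t,x,\pi}$ takes values, $ds\otimes d\mathbb Q$-a.e., in a form determined by $X$ via the covariation. For $s\in[t,T)$ we set
\[
\widehat z(s,\omega)\coloneqq\limsup_{k\to\infty}k\Bigl(\langle J^\pi(\cdot,X),B\rangle_s-\langle J^\pi(\cdot,X),B\rangle_{s-1/k}\Bigr).
\]
The difficulty is that this quantity involves $B$, not only $X$, and $B$ is not a functional of $X$ when $n>d$.

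\emph{Step 2.} To remove this dependence, we use the alternative route of El Karoui--Peng--Quenez: approximate the driver, or, more simply, exploit the linear structure. The component of $Z$ orthogonal to the range of $\sigma^\top$ must vanish in a Markovian solution. Indeed, $J^\pi(\cdot,X)$ is a functional of $X$, and the martingale part of $X$ is $\int\sigma\,dB$. Hence $d\langle Y,X\rangle_s=\sigma(s,X_s)^\top$-type identities determine $\sigma Z$, namely $d\langle Y,X^{i}\rangle_s=(\sigma(s,X_s)Z_s)_i\,ds$, and $\sigma Z$ is a Borel functional of the path of $X$ through $\limsup$ of difference quotients of $\langle Y,X\rangle$. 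The quadratic covariation is itself a pathwise Borel functional of $(J^\pi(\cdot,X),X)$ obtained via limits of sums along dyadic partitions.

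\emph{Step 3.} To get a deterministic function of $(s,X_s)$ rather than a path functional, we rely on the Markov property (uniqueness in law plus the flow property). Taking conditional expectations, or equivalently applying \cite[Theorem~4.1]{ElKarouiPengQuenez1997} or its standard extension to the $Z$-component, yields a Borel $\tilde z$ with $\sigma Z_s=\tilde z(s,X_s)$. We then define $z_\pi\coloneqq\sigma^\top(\sigma\sigma^\top)^{-1}\tilde z$.

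\textbf{Main obstacle.} The key step is showing that $Z$ lies in the range of $\sigma^\top$, i.e.\ that its projection onto $\ker\sigma$ vanishes. We would argue this as follows. The filtration generated by $X$ carries all the information; by martingale representation for the $\mathbb Q$-BSDE and uniqueness, the solution with $Z$ replaced by its projection $P_s Z_s$ onto $\operatorname{range}\sigma^\top$ again solves the BSDE. Here we use $\langle Pz,\bar\psi_\pi\rangle$: this requires $\bar\psi_\pi$ to have no kernel component, which is false in general. So instead we argue with a Markovian representative directly, via the fact that $Y=J^\pi(\cdot,X)$ is $\mathcal F^X$-adapted, hence its martingale part is an $\mathcal F^X$-stochastic integral against $\int\sigma\,dB$, and this forces $Z=\sigma^\top\eta$.

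\textbf{Uniqueness and independence of the space.} If $z,z'$ both satisfy \eqref{equation:integrand-representation}, then $\int_t^T|z-z'|^2(s,X_s^{t,x})\,ds=0$ $\mathbb Q$-a.s. By Proposition~\ref{proposition:densities}, the positivity of the density gives $z=z'$ a.e. on $(t,T)\times\mathbb R^d$, and letting $t\downarrow0$ gives a.e. equality on $[0,T]\times\mathbb R^d$. Independence of the probability space follows because the construction is a Borel functional of the law of $(X,J^\pi(\cdot,X))$, which is canonical.
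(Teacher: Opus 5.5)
\textbf{Verdict: the proposal has a gap in its existence argument, though the fix is the paper's own one-line citation.} Your uniqueness argument is the paper's; the paper takes $t=0$ directly, so the limit $t\downarrow0$ is unnecessary.

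\textbf{Where the existence argument fails.} Steps~1--2 only express $Z$ (respectively $\sigma Z$) as progressive path functionals of $(X,B)$ (respectively $X$). That is much weaker than being a function of $(s,X^{t,x}_s)$. A Markovian function appears only in Step~3, and there ``taking conditional expectations'' is not an argument: conditioning $\sigma Z_s$ on $X_s$ produces a function of $X_s$, but does not show that $\sigma Z_s$ equals it. So everything rests on the citation of \cite[Theorem~4.1]{ElKarouiPengQuenez1997}. That theorem, however, already represents the full $\mathbb R^n$-valued integrand: $Z^{t,x,\pi}_s=z_\pi(s,X^{t,x}_s)$, with a single Borel $z_\pi$ for all initial conditions. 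This one-line citation is exactly the paper's existence proof. Once you use it, the passage through $\sigma Z$, the inversion by $\sigma^\top(\sigma\sigma^\top)^{-1}$, and your ``main obstacle'' are superfluous.

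\textbf{The detour has its own unproved step.} If you keep it, your $z_\pi$ reproduces $Z$ only if $Z\in\operatorname{range}\sigma^\top$ almost everywhere. Your justification (after the projected-BSDE attempt, which you rightly discard) presupposes two facts. First, that every $\mathbb F^X$-martingale is a stochastic integral against $\int\sigma\,dB$. Second, that the $\mathbb F^X$-martingale part of $Y$ coincides with its $\mathbb F^B$-martingale part $\int\langle Z,dB\rangle$. Both are true here, via well-posedness of the martingale problem for $(b_0,\sigma\sigma^\top)$ with Jacod's representation theorem, and then uniqueness of the canonical decomposition in $\mathbb F^B$. But you neither prove nor cite them, and the real difficulty is the Markovian representation, not the kernel component.

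\textbf{The Step~1 obstruction is not real.} A Markovian representative of $Z$ does not require $Z$ to be a functional of the path of $X$. By the flow property, the forward quotient $h^{-1}(\langle Y,B\rangle_{s+h}-\langle Y,B\rangle_s)$ is a fixed functional of $X_s$ and of the increments $(B_v-B_s)_{v\geq s}$, which are independent of $\mathcal F^B_s$. The backward quotient is $\mathcal F^B_s$-measurable. For almost every $s$, almost surely, both tend to $Z_s$. Conditioning on $\mathcal F^B_s$, after a bounded transformation such as $\arctan$, then gives $Z_s=\phi(s,X_s)$ with $\phi$ independent of $(t,x)$. This would make Step~3 self-contained, up to joint measurability and null sets. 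Similarly, independence of the Brownian space is cleanest via the fact that $X^{t,x}$ and $Z^{t,x,\pi}$ are fixed measurable functionals of $B$ (strong solution plus BSDE uniqueness), so the identity transfers between Brownian spaces. Phrasing it as a functional of ``the law of $(X,J^\pi(\cdot,X))$'' is too vague.
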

\begin{proof}
The existence of a Borel measurable function $z_\pi$ satisfying \eqref{equation:integrand-representation} follows from \cite[Theorem 4.1]{ElKarouiPengQuenez1997}. Uniqueness up to $dt\otimes dx$-almost everywhere equality follows from \eqref{equation:integrand-representation} and Proposition~\ref{proposition:densities}. More precisely, let $z$ and $z'$ be two Borel measurable functions satisfying \eqref{equation:integrand-representation}. Taking $t=0$ and an arbitrary $x\in\mathbb R^d$, we obtain $z(s,X_s^{0,x})=z'(s,X_s^{0,x})$ $ds\otimes d\mathbb Q$-almost everywhere on $[0,T]\times\Omega$. Since, by Proposition~\ref{proposition:densities}, $X_s^{0,x}$ has a density which is strictly positive $ds\otimes dy$-almost everywhere on $(0,T)\times\mathbb R^d$, we conclude that $z=z'$ $dt\otimes dx$-almost everywhere.
\end{proof}

\noindent We then fix a finite Borel representative of $z_\pi$. Associated with this choice, we define
\begin{equation}\label{equation:q-definition}
    q^\pi(t,x,a)\coloneqq\left\langle z_\pi(t,x),
    \psi(t,x,a)-\bar\psi_\pi(t,x)\right\rangle+
    r(t,x,a)-\bar r_\pi(t,x)+\gamma\ell_\pi(t,x).
\end{equation}
The resulting function is Borel measurable on
$[0,T]\times\mathbb R^d\times A$. If another Borel representative of $z_\pi$ is chosen, the
corresponding $q$-function agrees with $q^\pi$ for every $a\in A$ outside a $dt\otimes dx$-null set.

\medskip\noindent The definition above immediately yields the normalization property satisfied by the $q$-function. This identity already appears in \cite[Theorem~6, equation~(20)]{JiaZhou2023} and \cite[Theorem~6, equation~(6)]{JiaZhou2025Erratum} as part of the martingale characterization of the classical $q$-function. In the present setting, it follows directly from our BSDE-based definition \eqref{equation:q-definition}, and we record it separately since it will play a key role in the policy improvement argument below.

\begin{proposition}\label{proposition:q-normalization}
For every $\pi\in\Pi$ and $(t,x)\in[0,T]\times\mathbb R^d$,
\begin{equation}    \label{equation:q-normalization}
    \int_A\bigl[q^\pi(t,x,a)-\gamma\log\pi(a\,|\, t,x)\bigr]\pi(a\,|\, t,x)\,da=0.
\end{equation}
Moreover, the integral in \eqref{equation:q-normalization} is absolutely convergent.
\end{proposition}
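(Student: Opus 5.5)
The plan is a direct computation from the definition \eqref{equation:q-definition}, splitting the integrand into its four constituent pieces. Fix $(t,x)$ and write $\pi(a)=\pi(a\,|\,t,x)$ for brevity. The integrand $\bigl[q^\pi(t,x,a)-\gamma\log\pi(a)\bigr]\pi(a)$ is the sum of the four terms
\[
\langle z_\pi(t,x),\psi(t,x,a)-\bar\psi_\pi(t,x)\rangle\pi(a),\quad
\bigl(r(t,x,a)-\bar r_\pi(t,x)\bigr)\pi(a),\quad
\gamma\ell_\pi(t,x)\pi(a),\quad
-\gamma\log\pi(a)\,\pi(a).
\]

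I first check that each term is absolutely integrable over $A$, which gives the absolute convergence claim.
\begin{itemize}
\item For the first term, $z_\pi(t,x)$ is a finite vector because we fixed a finite Borel representative, and $\psi$ and $\bar\psi_\pi$ are bounded by $\|\psi\|_\infty$. The term is therefore dominated by $2|z_\pi(t,x)|\,\|\psi\|_\infty\,\pi(a)$, which is integrable since $\int_A\pi\,da=1$.
\item For the second term, the growth bound \eqref{equation:reward-growth} and the moment condition \eqref{equation:policy-moments} with $p=m$ give $\int_A|r(t,x,a)|\pi(a)\,da<\infty$. In particular $\bar r_\pi(t,x)$ is finite, so the term is integrable.
\item For the third and fourth terms, the entropy bound \eqref{equation:entropy-integrability} shows that $\int_A|\log\pi|\,\pi\,da<\infty$. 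Hence $\ell_\pi(t,x)$ is finite, and $\log\pi\cdot\pi$ is integrable.
\end{itemize}
Since the sum of finitely many absolutely integrable functions is absolutely integrable, the integral in \eqref{equation:q-normalization} converges absolutely.

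Because everything is integrable, linearity lets me integrate term by term, and each pair of terms cancels by the definitions of the averages.
\begin{itemize}
\item The first term integrates to $\langle z_\pi(t,x),\bar\psi_\pi(t,x)-\bar\psi_\pi(t,x)\rangle=0$, using $\int_A\pi\,da=1$.
\item The second term integrates to $\bar r_\pi(t,x)-\bar r_\pi(t,x)=0$.
\item The third and fourth terms integrate to $\gamma\ell_\pi(t,x)-\gamma\ell_\pi(t,x)=0$.
\end{itemize}
Summing these gives \eqref{equation:q-normalization}.

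There is no real obstacle here. The only point needing care is that the integrability must be established before the integral is split, so that the linear decomposition is legitimate. This relies on $z_\pi(t,x)$ being finite at every point, which holds because we fixed a finite Borel representative of $z_\pi$. The identity therefore holds for every $(t,x)$, not merely almost everywhere.
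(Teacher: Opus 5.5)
Your proof is correct and follows the paper's argument. Both use the vanishing of the $\pi$-averages of $\psi-\bar\psi_\pi$ and $r-\bar r_\pi$ together with the definition of $\ell_\pi$, and both get absolute convergence from the boundedness of $\psi$, the growth of $r$, and the moment and entropy bounds on $\pi$; you simply make explicit that integrability comes before splitting and that finiteness of the fixed representative $z_\pi$ gives the identity at every $(t,x)$.
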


\begin{proof}
By definition of $\bar\psi_\pi$ and $\bar r_\pi$,
\[
    \int_A
    \bigl(
        \psi(t,x,a)-\bar\psi_\pi(t,x)
    \bigr)
    \pi(a\,|\, t,x)\,da
    =
    0,\qquad\int_A
    \bigl(
        r(t,x,a)-\bar r_\pi(t,x)
    \bigr)
    \pi(a\,|\, t,x)\,da
    =
    0.
\]
Substituting these identities, together with the definition of $\ell_\pi$, into \eqref{equation:q-definition} gives \eqref{equation:q-normalization}.
Absolute integrability follows from the boundedness of $\psi$, the polynomial growth of $r$, and
\eqref{equation:policy-moments}--\eqref{equation:entropy-integrability}.
\end{proof}

\begin{remark}\label{remark:classical-q}
The BSDE definition \eqref{equation:q-definition} is consistent with the continuous-time $q$-function introduced in \cite{JiaZhou2023} whenever the policy value is smooth. Indeed, if $J^\pi\in C^{1,2}([0,T)\times\mathbb R^d)$, then one can choose the representative $z_\pi(t,x)=\sigma(t,x)^\top\partial_x J^\pi(t,x)$ for every $(t,x)\in[0,T)\times\mathbb R^d$ and obtain
\begin{equation*}
    q^\pi(t,x,a)
    =
    \bigl(\partial_t+\mathcal L^a-\beta\bigr)
    J^\pi(t,x)
    +
    r(t,x,a),
    \qquad
    a\in A,\quad dt\otimes dx\text{-a.e.\ on }[0,T)\times\mathbb R^d,
\end{equation*}
where
\begin{equation*}
    \mathcal L^a\varphi(t,x)\coloneqq\langle
    b(t,x,a),\partial_x\varphi(t,x)\rangle
    +\frac12\operatorname{tr}\left(
    \sigma(t,x)\sigma(t,x)^\top\partial_x^2\varphi(t,x)
    \right).
\end{equation*}
\end{remark}

\begin{remark}\label{remark:example}
We provide an example of a stochastic optimal control problem satisfying our Assumption~\ref{assumption:coefficients}, but for which the policy value $J^\pi$ is not of class $C^{1,2}$ as required in \cite{JiaZhou2023}. Let $d=n=1$, $A=[-1,1]$, $b_0=0$, $\sigma=1$ and $\psi(t,x,a)=a$, so that
\begin{equation*}
    dX_s=\alpha_s\,ds+dW_s,
\end{equation*}
and, given $\delta>0$, let
\[
    r(t,x,a)=\mathbbm 1_{[-\delta,\delta]}(x)-\frac{a^2}{2},
    \qquad
    g=0,\qquad \beta=0.
\]
Consider the uniform policy $\mu_0(a\,|\,t,x)=1/2$ and its entropy-regularized value $J^{\mu_0}$, defined in \eqref{equation:policy-value}. The associated exploratory state is a Brownian motion, and the averaged reward $R_{\mu_0}$ equals $\mathbbm 1_{[-\delta,\delta]}(x)+\kappa_\gamma$, with $\kappa_\gamma\coloneqq\gamma\log2-1/6$. Denoting by $\Phi$ the standard normal distribution function and writing $\tau\coloneqq T-t$, we obtain
\begin{equation*}
\begin{aligned}
    J^{\mu_0}(t,x)=\int_0^{\tau}\left[
        \Phi\left(\frac{\delta-x}{\sqrt s}\right)
        -\Phi\left(\frac{-\delta-x}{\sqrt s}\right)
    \right]ds+\kappa_\gamma\tau=\Theta(\tau,\delta-x)-\Theta(\tau,-\delta-x)+\kappa_\gamma\tau,
\end{aligned}
\end{equation*}
where $\Theta(s,y)\coloneqq(s+y^2)\,\Phi(y/\sqrt s)+y\sqrt s\,\Phi'(y/\sqrt s)-\max\{y,0\}^2$. The function $J^{\mu_0}$ is continuous, and its spatial derivative
\[
    \partial_xJ^{\mu_0}(t,x)
    =\partial_y\Theta(\tau,x+\delta)-\partial_y\Theta(\tau,x-\delta),
    \qquad
    \partial_y\Theta(\tau,y)
    =2\sqrt\tau\,\Phi'\left(\frac{y}{\sqrt\tau}\right)
    -2|y|\,\Phi\left(-\frac{|y|}{\sqrt\tau}\right),
\]
is bounded and Lipschitz continuous in $x$. However, for $x\neq\pm\delta$, the second-order spatial derivative
\[
    \partial^2_{x}J^{\mu_0}(t,x)
    =2\left[
        \Phi\left(\frac{x+\delta}{\sqrt\tau}\right)
        -\Phi\left(\frac{x-\delta}{\sqrt\tau}\right)
    \right]
    -2\,\mathbbm 1_{(-\delta,\delta)}(x),
\]
jumps at $x=\pm\delta$, for every $t<T$. Hence $J^{\mu_0}$ does not belong to $C^{1,2}([0,T)\times\mathbb R)$. The classical definition of the $q$-function in \cite[Definition~4]{JiaZhou2023} is therefore not available, and $J^{\mu_0}$ is not an admissible candidate value in \cite[Theorem~7]{JiaZhou2025Erratum}. By contrast, the results of this paper apply. Since $\partial_tJ^{\mu_0}$ and $\partial_xJ^{\mu_0}$ are continuous on $[0,T)\times\mathbb R$ and $\partial^2_{x}J^{\mu_0}$ is bounded, It\^o's formula with generalized derivatives \cite[Theorem 1 of Section~10, Chapter~2]{Krylov1980} can be applied to $s\longmapsto J^{\mu_0}(s,X_s^{t,x})$ on $[t,T-\varepsilon]$, for every $\varepsilon\in(0,T-t)$. Comparing the resulting decomposition with \eqref{equation:policy-bsde} shows that one can choose the representative $z_{\mu_0}=\partial_xJ^{\mu_0}$ on $[0,T)\times\mathbb R$; any other representative coincides with it $dt\otimes dx$-almost everywhere. With this choice, the $q$-function defined in \eqref{equation:q-definition} is given by
\begin{equation}\label{equation:q-function-example}
    q^{\mu_0}(t,x,a)
    =a\,\partial_xJ^{\mu_0}(t,x)-\frac{a^2}{2}-\kappa_\gamma.
\end{equation}
Notice that \eqref{equation:q-function-example} involves the value only through its gradient, and hence is well defined everywhere and continuous on $[0,T)\times\mathbb R\times A$.
\end{remark}

\begin{remark}\label{remark:generalized-derivatives}
In the example of Remark~\ref{remark:example}, the value $J^{\mu_0}$ is not of class $C^{1,2}$, but its generalized derivatives are bounded, and this is what allowed us to identify $z_{\mu_0}$ with $\partial_xJ^{\mu_0}$. The same argument applies to any policy $\pi\in\Pi$. Suppose that $J^\pi$ has generalized derivatives $\partial_tJ^\pi$, $\partial_xJ^\pi$, and $\partial_x^2J^\pi$ in $L^{d+1}_{\mathrm{loc}}([0,T)\times\mathbb R^d)$. Then It\^o's formula with generalized derivatives \cite[Theorem 1 of Section~10, Chapter~2]{Krylov1980} can be applied to $s\longmapsto J^\pi(s,X_s^{t,x})$, before the horizon and up to the exit time of $X^{t,x}$ from a ball. Comparing the resulting decomposition with \eqref{equation:policy-bsde}, as in Remark~\ref{remark:example}, we obtain $z_\pi=\sigma^\top\partial_xJ^\pi$ $dt\otimes dx$-almost everywhere, and $q^\pi$ is then given by the formula of Remark~\ref{remark:classical-q}, with the derivatives understood in the generalized sense. Regularity of this kind is the natural one for linear parabolic equations with a nondegenerate diffusion matrix and only measurable lower-order coefficients, such as the equation formally satisfied by $J^\pi$, and it is the object of the $L_p$-theory of such equations; see, e.g., \cite[Chapter~6]{Krylov2008}.

\medskip\noindent
We do not follow this route. The construction based on the BSDE \eqref{equation:policy-bsde} provides $z_\pi$, and hence $q^\pi$, directly under Assumption~\ref{assumption:coefficients}, without any estimate on the derivatives of $J^\pi$. More importantly, generalized derivatives are a property of the value function, not of the functions among which it is sought: in Theorems~\ref{theorem:joint-martingale-characterization} and~\ref{theorem:optimal-martingale-characterization}, the candidate $u$ is an arbitrary continuous function of polynomial growth, to which no version of It\^o's formula applies.
\end{remark}

\section{Policy improvement via the \texorpdfstring{$q$}{q}-function}
\label{section:policy-improvement}

We now show that the $q$-function introduced above can be used to improve a given policy. Our result is the analogue of \cite[Theorem~2]{JiaZhou2023} in the present setting. As in their approach, the updated policy is obtained by applying a Gibbs
transformation to the $q$-function. Here, however, the pointwise improvement induced by the Gibbs update is translated into an ordering of the corresponding BSDE drivers, so that policy improvement follows from the comparison principle for scalar BSDEs.

\subsection{The Gibbs update}

Fix $\pi\in\Pi$. For $(t,x)\in[0,T]\times\mathbb R^d$ and a probability density $\rho$ on $A$, define
\[
    \mathcal G^\pi(t,x;\rho)
    \coloneqq
    \int_A
    \bigl[
        q^\pi(t,x,a)-\gamma\log\rho(a)
    \bigr]\rho(a)\,da,
\]
whenever the integral is well defined. By
\eqref{equation:q-normalization},
\begin{equation}\label{equation:gibbs-functional-normalization}
    \mathcal G^\pi
    \bigl(t,x;\pi(\cdot\,|\,t,x)\bigr)
    =
    0.
\end{equation}
As in \cite{JiaZhou2023}, the policy improvement step is based on the Gibbs density generated by $q^\pi$, formally given by
\begin{equation}\label{equation:gibbs-update}
    \pi'(a\,|\, t,x)
    \coloneqq
    \frac{
        \displaystyle
        \exp\left(
            \frac{q^\pi(t,x,a)}{\gamma}
        \right)
    }{
        \displaystyle
        \int_A
        \exp\left(
            \frac{q^\pi(t,x,a')}{\gamma}
        \right)\,da'
    }.
\end{equation}
For this expression to be well defined, its normalizing constant must be finite. Since $\psi$ is bounded and $z_\pi(t,x)$ is finite, this holds if and only if
\begin{equation}\label{equation:gibbs-integrability}
    \int_A
    \exp\left(
        \frac{r(t,x,a)}{\gamma}
    \right)\,da
    <+\infty.
\end{equation}
The integral in \eqref{equation:gibbs-integrability} is
automatically strictly positive, since $r(t,x,\cdot)$ is finite and $A$ has positive Lebesgue measure. Whenever \eqref{equation:gibbs-integrability} holds, the terms in
$q^\pi(t,x,a)$ that are independent of the action cancel in \eqref{equation:gibbs-update}, and hence
\[
    \pi'(a\,|\, t,x)
    =
    \frac{
        \displaystyle
        \exp\left(
            \frac{
                \langle z_\pi(t,x),\psi(t,x,a)\rangle
                +r(t,x,a)
            }{\gamma}
        \right)
    }{
        \displaystyle
        \int_A
        \exp\left(
            \frac{
                \langle z_\pi(t,x),\psi(t,x,a')\rangle
                +r(t,x,a')
            }{\gamma}
        \right)\,da'
    }.
\]
Notice that condition~\eqref{equation:gibbs-integrability} ensures that the
Gibbs update is well defined as a probability density, but it does not imply that it satisfies all the admissibility requirements of
Definition~\ref{definition:policy}. Accordingly, in what follows we assume that \eqref{equation:gibbs-integrability} holds
$dt\otimes dx$-almost everywhere and that
\eqref{equation:gibbs-update} admits an admissible representative
$\pi'\in\Pi$. Recall from Definition~\ref{definition:policy} that $\pi'$ is admissible if the bounds \eqref{equation:policy-moments} and \eqref{equation:entropy-integrability} hold true. For instance, if $A$ is bounded, then \eqref{equation:gibbs-integrability} holds everywhere and, by \eqref{equation:reward-growth}, there exists a constant $C>0$ such that
\[
    \bigl|\log\pi'(a\,|\, t,x)\bigr|
    \leq
    C\bigl(1+|x|^m+|z_\pi(t,x)|\bigr),
    \qquad (t,x,a)\in[0,T]\times\mathbb R^d\times A.
\]
In this case, the Gibbs update is admissible as soon as $z_\pi$ admits a representative with polynomial growth.

\medskip\noindent The policy improvement argument relies on the following lemma. Before stating it, we notice that, for probability measures $P_1$ and $P_2$ on the same measurable space, we denote by $D_{\mathrm{KL}}(P_1\,\|\,P_2)$ the relative entropy (Kullback--Leibler divergence) of $P_1$ with respect to $P_2$, with $D_{\mathrm{KL}}(P_1\,\|\,P_2)=+\infty$ unless $P_1\ll P_2$.
\begin{lemma}
\label{lemma:gibbs-variational}
Let $\pi\in\Pi$ and suppose that its Gibbs update
$\pi'\in\Pi$ is well defined as above. Then, for $dt\otimes dx$-almost every $(t,x)$ and for every probability density $\rho$ on $A$ for
which $\mathcal G^\pi(t,x;\rho)$ is well defined,
\begin{equation}\label{equation:entropy-variational-identity}
    \mathcal G^\pi
    \bigl(t,x;\rho\bigr)=
    \gamma\log\int_A\exp\left(\frac{q^\pi(t,x,a)}{\gamma}
    \right)\,da-\gamma
    D_{\mathrm{KL}}\left(\rho
    \,\middle\|\,\pi'(\cdot\,|\,t,x)
    \right).
\end{equation}
Consequently, for such $(t,x)$, $\pi'(\cdot\,|\,t,x)$ uniquely maximizes $\mathcal G^\pi(t,x;\cdot)$, up to equality almost everywhere on $A$.
\end{lemma}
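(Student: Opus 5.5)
Fix $(t,x)$ outside the $dt\otimes dx$-null set where \eqref{equation:gibbs-integrability} fails. At such a point set
\[
c(t,x)\coloneqq\gamma\log\int_A\exp\left(\frac{q^\pi(t,x,a)}{\gamma}\right)\,da .
\]
This is a finite real number: the integral is strictly positive because $A$ has positive measure and $q^\pi$ is finite, and it is finite by \eqref{equation:gibbs-integrability} together with the boundedness of $\psi$. By \eqref{equation:gibbs-update}, for every $a\in A$ we then have the pointwise identity
\[
q^\pi(t,x,a)=c(t,x)+\gamma\log\pi'(a\,|\,t,x).
\]
The plan is to substitute this identity into $\mathcal G^\pi(t,x;\rho)$. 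Using $\int_A\rho\,da=1$, this formally gives
\[
\mathcal G^\pi(t,x;\rho)=c(t,x)-\gamma\int_A\rho\log\frac{\rho}{\pi'(\cdot\,|\,t,x)}\,da,
\]
and the last integral is $D_{\mathrm{KL}}(\rho\,\|\,\pi'(\cdot\,|\,t,x))$. The subtle point is whether splitting the integral is legitimate.

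\emph{Meaning of ``well defined''.} I would take $\mathcal G^\pi(t,x;\rho)$ to be well defined when the integral of $[q^\pi-\gamma\log\rho]\rho$ exists in $[-\infty,+\infty)$, i.e.\ its positive part is integrable. The pointwise identity gives
\[
[q^\pi-\gamma\log\rho]\rho=c\rho-\gamma\rho\log(\rho/\pi').
\]
The integrand $\rho\log(\rho/\pi')$ has integrable negative part, since $u\log u\geq u-1$ yields
\[
\rho\log(\rho/\pi')\geq \rho-\pi',
\]
which is integrable. So the integral of $\rho\log(\rho/\pi')$ is well defined in $(-\infty,+\infty]$. Because $\pi'>0$ everywhere, $\rho\ll\pi'$ holds automatically, and this integral equals the relative entropy. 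Hence the identity \eqref{equation:entropy-variational-identity} holds in $[-\infty,c]$, with both sides equal to $-\infty$ exactly when the divergence is infinite. I expect this bookkeeping of extended values to be the only genuine obstacle; the algebra itself is immediate.

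\emph{Uniqueness of the maximizer.} Gibbs' inequality (Jensen's inequality applied to $-\log$) gives $D_{\mathrm{KL}}\geq0$, with equality if and only if $\rho=\pi'(\cdot\,|\,t,x)$ $da$-a.e. The density $\pi'(\cdot\,|\,t,x)$ is itself admissible for $\mathcal G^\pi$: the positive part of its integrand is integrable, because the integrand equals $c\pi'$. For this density $\mathcal G^\pi=c$. Therefore $\mathcal G^\pi(t,x;\rho)\leq c=\mathcal G^\pi(t,x;\pi')$ for every admissible $\rho$, with equality only if $\rho=\pi'(\cdot\,|\,t,x)$ almost everywhere on $A$, which is the claimed unique maximization.

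The exceptional null set is only where \eqref{equation:gibbs-integrability} fails. If $\pi'$ is replaced by another admissible representative, the two agree off a further $dt\otimes dx$-null set, which I would add to the exceptional set.
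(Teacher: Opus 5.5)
Your proof is correct and follows the paper's argument. Like the paper, you substitute $\gamma\log\pi'=q^\pi-\gamma\log\int_A e^{q^\pi/\gamma}\,da$ into the relative entropy and conclude from Gibbs' inequality. Your extended-value bookkeeping, which uses $u\log u\geq u-1$ to show that $\mathcal G^\pi(t,x;\rho)$ is automatically defined in $[-\infty,c]$, is a useful precision that the paper leaves implicit.

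One small imprecision: two representatives that agree $dt\otimes dx\otimes da$-a.e. agree, for a.e. $(t,x)$, only for $da$-a.e. $a$ rather than for every $a$. That is all the integrals require, so the argument is unaffected.
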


\begin{proof}
By \eqref{equation:gibbs-update}, for $dt\otimes dx$-almost every $(t,x)$ and almost every $a\in A$,
\[
    \log\pi'(a\,|\, t,x)=\frac{q^\pi(t,x,a)}{\gamma}
    -\log\int_A\exp\left(\frac{q^\pi(t,x,a')}{\gamma}
    \right)\,da'.
\]
Therefore,
\begin{align*}
    D_{\mathrm{KL}}\left(\rho\,\middle\|\,\pi'(\cdot\,|\,t,x)\right)&=\int_A\rho(a)\log\left(
    \frac{\rho(a)}{\pi'(a\,|\, t,x)}\right)\,da \\
    &=\log\int_A\exp\left(\frac{q^\pi(t,x,a)}{\gamma}\right)\,da-\frac{1}{\gamma}
    \mathcal G^\pi\bigl(t,x;\rho\bigr),
\end{align*}
which gives \eqref{equation:entropy-variational-identity}.
The conclusion follows from the nonnegativity of the
Kullback--Leibler divergence, with equality if and only if $\rho(\cdot)=\pi'(\cdot\,|\,t,x)$ almost everywhere on $A$.
\end{proof}

\noindent In particular, the maximality of $\pi'(\cdot\,|\,t,x)$ stated in
Lemma~\ref{lemma:gibbs-variational} and 
\eqref{equation:gibbs-functional-normalization} give
\begin{equation}
    \mathcal G^\pi
    \bigl(t,x;\pi'(\cdot\,|\,t,x)\bigr)
    \geq
    \mathcal G^\pi
    \bigl(t,x;\pi(\cdot\,|\,t,x)\bigr)
    =
    0,\qquad \text{for $dt\otimes dx$-a.e. $(t,x)$.}
    \label{equation:gibbs-local-improvement}
\end{equation}
Thus the Gibbs update improves the entropy-regularized $q$-criterion pointwise.

\subsection{BSDE policy improvement}

We next show that the pointwise improvement
\eqref{equation:gibbs-local-improvement} translates into an improvement of the policy value. The connection is provided by a simple identity between the entropy-regularized $q$-criterion and the
BSDE drivers.

\medskip\noindent For every $\nu\in\Pi$, $\xi\in\mathbb R$, and for
$ds\otimes dy$-almost every $(s,y)$,
\begin{equation}\label{equation:policy-driver-difference}
    f_\nu(s,y,\xi,z_\pi(s,y))
    -
    f_\pi(s,y,\xi,z_\pi(s,y))
    =
    \mathcal G^\pi
    \bigl(s,y;\nu(\cdot\,|\,s,y)\bigr).
\end{equation}
Indeed,
\[
\begin{aligned}
    f_\nu(s,y,\xi,z_\pi(s,y))-f_\pi(s,y,\xi,z_\pi(s,y))
    =\left\langle z_\pi(s,y),\bar\psi_\nu(s,y)-\bar\psi_\pi(s,y)\right\rangle+ R_\nu(s,y)-R_\pi(s,y).
\end{aligned}
\]
The identity follows directly from \eqref{equation:q-definition} and the definitions of $\bar\psi_\nu$, $\bar r_\nu$, and $\ell_\nu$.

\noindent Notice that the right-hand side in \eqref{equation:policy-driver-difference} is independent of $\xi$. Thus, improvement of the entropy-regularized $q$-criterion is equivalent to an improvement of the BSDE driver when evaluated at the Markovian integrand associated with the current policy $\pi$.

\medskip\noindent Define the optimal value
\[
    J^*(t,x)
    \coloneqq
    \sup_{\nu\in\Pi}J^\nu(t,x).
\]

\begin{theorem}
\label{theorem:policy-improvement}
Let $\pi\in\Pi$ and suppose that its Gibbs update admits an admissible representative $\pi'\in\Pi$. Then
\begin{equation}\label{equation:policy-improvement}
    J^{\pi'}(t,x)\geq J^\pi(t,x),\qquad
    (t,x)\in[0,T]\times\mathbb R^d.
\end{equation}
Moreover, if $\pi$ is a fixed point of the Gibbs update, that is, if $\pi'=\pi$ almost everywhere, then
\begin{equation}\label{equation:gibbs-fixed-point-optimality}
    J^\pi(t,x)=J^*(t,x),\qquad (t,x)\in[0,T]\times\mathbb R^d.
\end{equation}
In particular, $\pi$ is optimal within $\Pi$.
\end{theorem}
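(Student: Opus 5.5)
We compare the two BSDEs for $\pi$ and $\pi'$ on a common reference space $(\mathbb Q,B)$ with reference diffusion $X^{t,x}$, and conclude with the comparison theorem for linear BSDEs.

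\emph{Proof of \eqref{equation:policy-improvement}.} Fix $(t,x)$ with $t<T$; the case $t=T$ is trivial, since both values equal $g$. By Propositions~\ref{proposition:bsde-value} and~\ref{proposition:markovian-integrand}, the pair $(Y^{t,x,\pi},Z^{t,x,\pi})$ solves \eqref{equation:policy-bsde} with $Z_s^{t,x,\pi}=z_\pi(s,X^{t,x}_s)$. We rewrite this equation as a BSDE with driver $f_{\pi'}$ plus a correction term:
\[
f_\pi(s,X_s,Y_s,Z_s)=f_{\pi'}(s,X_s,Y_s,Z_s)-\mathcal G^\pi\bigl(s,X_s;\pi'(\cdot\,|\,s,X_s)\bigr).
\]
This uses identity \eqref{equation:policy-driver-difference} with $\nu=\pi'$ and $\xi=Y_s$. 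That identity holds only for $ds\otimes dy$-a.e.\ $(s,y)$, so we transfer it to $ds\otimes d\mathbb Q$-a.e.\ along $X^{t,x}$. This is legitimate because $X^{t,x}_s$ has a density for $s>t$ (Proposition~\ref{proposition:densities}), so Lebesgue-null sets are not charged.

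By \eqref{equation:gibbs-local-improvement}, the correction $\mathcal G^\pi(s,X_s;\pi'(\cdot\,|\,s,X_s))$ is nonnegative a.e. Hence $(Y^{t,x,\pi},Z^{t,x,\pi})$ solves a BSDE with terminal value $g(X_T)$ and driver $f_{\pi'}-(\text{nonnegative process})$. On the same space, $(Y^{t,x,\pi'},Z^{t,x,\pi'})$ solves the BSDE with driver $f_{\pi'}$ and the same terminal value.

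Set $\Delta Y=Y^{\pi'}-Y^{\pi}$ and $\Delta Z=Z^{\pi'}-Z^{\pi}$. Because $f_{\pi'}$ is affine in $(y,z)$ with bounded coefficients, $(\Delta Y,\Delta Z)$ solves a linear BSDE with zero terminal value and nonnegative source. We then apply the Girsanov change of measure with density built from $\bar\psi_{\pi'}$ together with the discount $e^{-\beta s}$, exactly as in Step 2 of Proposition~\ref{proposition:bsde-value}. This gives
\[
\Delta Y_t=\mathbb E^{\widetilde{\mathbb Q}^{\pi'}}\Big[\int_t^T e^{-\beta(s-t)}\,\mathcal G^\pi\bigl(s,X_s;\pi'\bigr)\,ds\Big]\ge0.
\]
The main technical point is integrability. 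The process $\mathcal G^\pi(s,X_s;\pi')=f_{\pi'}-f_\pi$ evaluated along the solution has polynomial growth in $|X_s|$ plus a term $\|\psi\|_\infty|Z_s|$, and this is integrable under the $\mathcal S^p\times\mathcal H^p$ bounds \eqref{equation:bsde-moments} together with the Girsanov moment bounds. Therefore the stochastic integral is a true martingale, and the BSDE comparison theorem (e.g.\ \cite{ElKarouiPengQuenez1997}) applies directly. Since $Y_t^{t,x,\nu}=J^\nu(t,x)$ for every $\nu\in\Pi$, this gives $J^{\pi'}(t,x)\ge J^\pi(t,x)$.

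\emph{Proof of \eqref{equation:gibbs-fixed-point-optimality}.} Assume $\pi'=\pi$ a.e. Take an arbitrary $\nu\in\Pi$ and run the same argument with $\nu$ in place of $\pi'$, reversing the roles. By Lemma~\ref{lemma:gibbs-variational}, $\pi=\pi'$ maximizes $\mathcal G^\pi(s,y;\cdot)$ for a.e.\ $(s,y)$. Combined with \eqref{equation:gibbs-functional-normalization}, this gives
\[
\mathcal G^\pi(s,y;\nu(\cdot\,|\,s,y))\le0 \quad\text{for a.e. } (s,y).
\]
Here $\mathcal G^\pi(s,y;\nu)$ is well defined because $\nu$ satisfies the moment and entropy bounds \eqref{equation:policy-moments}--\eqref{equation:entropy-integrability}. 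Then \eqref{equation:policy-driver-difference} shows that $(Y^{\pi},Z^{\pi})$ solves the BSDE with driver $f_\nu+(\text{nonnegative process})$. The comparison argument from the first part now yields $J^\nu(t,x)\le J^\pi(t,x)$. Taking the supremum over $\nu\in\Pi$ gives $J^*\le J^\pi$, and the reverse inequality $J^\pi\le J^*$ holds because $\pi\in\Pi$.

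The step I expect to need the most care is transferring the a.e.\ identities \eqref{equation:policy-driver-difference} and \eqref{equation:gibbs-local-improvement} to statements holding along $X^{t,x}$. This works for every starting point $(t,x)$, including deterministic $x$, because of the density statement in Proposition~\ref{proposition:densities}. The integrability check for the martingale term is routine by comparison.
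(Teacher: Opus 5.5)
Your proposal is correct and follows essentially the paper's own proof. You transfer the a.e.\ driver inequality obtained from \eqref{equation:gibbs-local-improvement} and \eqref{equation:policy-driver-difference} to the reference diffusion via Proposition~\ref{proposition:densities} and conclude by BSDE comparison (your linearization and Girsanov step is the standard proof of the comparison theorem the paper cites), and you treat the fixed-point case the same way through Lemma~\ref{lemma:gibbs-variational}.
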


\begin{proof}
Fix $(t,x)\in[0,T]\times\mathbb R^d$ and consider the reference diffusion $X^{t,x}$. By \eqref{equation:gibbs-local-improvement} and
\eqref{equation:policy-driver-difference},
\begin{equation*}
    f_{\pi'}(s,y,\xi,z_\pi(s,y))
    \geq
    f_\pi(s,y,\xi,z_\pi(s,y))
\end{equation*}
for every $\xi\in\mathbb R$ and for $ds\otimes dy$-almost every $(s,y)$.

\noindent By Proposition~\ref{proposition:densities}, this $ds\otimes dy$-almost everywhere inequality remains valid when evaluated along the reference diffusion. Together with \eqref{equation:integrand-representation}, this yields \begin{equation*}
    f_{\pi'}\bigl(s,X_s^{t,x},Y_s^{t,x,\pi},
    Z_s^{t,x,\pi}\bigr)\geq f_\pi\bigl(s,X_s^{t,x},
    Y_s^{t,x,\pi},Z_s^{t,x,\pi}\bigr),\qquad ds\otimes d\mathbb Q\text{-a.e.}
\end{equation*}
The BSDEs associated with $\pi'$ and $\pi$ have the same terminal condition $g(X_T^{t,x})$. The comparison theorem for scalar BSDEs (see \cite[Theorem~6.2.2]{Pham2009}) therefore gives $Y_t^{t,x,\pi'}\geq Y_t^{t,x,\pi}$. By Proposition~\ref{proposition:bsde-value}, $Y_t^{t,x,\pi'}=J^{\pi'}(t,x)$ and $Y_t^{t,x,\pi}=J^\pi(t,x)$, which proves \eqref{equation:policy-improvement}.

\medskip\noindent Suppose now that $\pi$ is a fixed point of the Gibbs update. Since $\pi$ is then the maximizer in Lemma~\ref{lemma:gibbs-variational}, for every $\nu\in\Pi$,
\[
    \mathcal G^\pi
    \bigl(s,y;\nu(\cdot\,|\,s,y)\bigr)
    \leq
    \mathcal G^\pi
    \bigl(s,y;\pi(\cdot\,|\,s,y)\bigr)
    =
    0
\]
for $ds\otimes dy$-almost every $(s,y)$. Hence, by
\eqref{equation:policy-driver-difference},
\[
    f_\nu(s,y,\xi,z_\pi(s,y))
    \leq
    f_\pi(s,y,\xi,z_\pi(s,y)),\qquad\text{for all }\xi\in\mathbb R.
\]
Repeating the preceding comparison argument yields $J^\nu(t,x)\leq J^\pi(t,x)$ for every $\nu\in\Pi$. Taking the supremum over $\nu$ gives $J^*(t,x)=J^\pi(t,x)$, and proves \eqref{equation:gibbs-fixed-point-optimality}.
\end{proof}

\section{Martingale characterizations for policy evaluation}
\label{section:policy-evaluation}

We now characterize the policy value $J^\pi$ and the corresponding $q$-function $q^\pi$ through martingale properties of processes constructed from discretely sampled state--action observations.
These results provide the BSDE counterparts of the martingale characterizations in
\cite[Theorems~6 and~7]{JiaZhou2025Erratum}.

\medskip\noindent From now on, $\pi\in\Pi$ denotes the target policy whose
value and $q$-function are to be characterized, and $\mu\in\Pi$ denotes the behavior policy used to generate the observations.

\subsection{The observable process}

Throughout this section and in Section~\ref{section:optimal-value-and-q}, whenever an initial condition $(t,x)\in[0,T)\times\mathbb R^d$, a behavior policy $\mu\in\Pi$, and a sampling grid
\[
    G=\{t=t_0<t_1<\cdots<t_{N_G}=T\}
\]
are fixed, we use the shorthand notations $X\coloneqq X^{t,x,\mu,G}$, $\alpha\coloneqq\alpha^{\mu,G}$.

\medskip\noindent Let $u:[0,T]\times\mathbb R^d\to\mathbb R$ be a Borel measurable function of polynomial growth, continuous on $[0,T)\times\mathbb R^d$, and let $q:[0,T]\times\mathbb R^d\times A\to\mathbb R$ be a Borel function. We associate with $(u,q)$ the observable process
\begin{equation}\label{equation:observable-process}
    M_s^{\mu,G}[u,q]\coloneqq
    e^{-\beta(s-t)}u(s,X_s)+\int_t^s e^{-\beta(v-t)}
    \bigl[r(v,X_v,\alpha_v)-q(v,X_v,\alpha_v)
    \bigr]\,dv,\qquad s\in[t,T].
\end{equation}
The dependence of $M^{\mu,G}[u,q]$ on the initial condition $(t,x)$ is suppressed from the notation. 

\noindent For a given choice of $(t,x,\mu,G)$, we require the candidate $q$ to be integrable along the corresponding sampled trajectory, meaning that
\begin{equation}\label{equation:candidate-integrability}
    \int_t^T
    |q(s,X_s,\alpha_s)|\,ds<+\infty,
    \qquad \mathbb P\text{-a.s.}
\end{equation}
Since $q$ is Borel measurable and $(X,\alpha)$ is progressively measurable, the process $q(\cdot,X_\cdot,\alpha_\cdot)$ is progressively measurable. Together with \eqref{equation:candidate-integrability}, the growth assumptions on $r$, and the moment estimates for $X$ and $\alpha$, this shows that the integral in \eqref{equation:observable-process} is well defined and has absolutely continuous sample paths. Since $X$ is continuous and $u$ is continuous on $[0,T)\times\mathbb R^d$, $M^{\mu,G}[u,q]$ is therefore an $\mathbb F^G$-adapted process with continuous paths on $[t,T)$. Its paths may be discontinuous at the terminal time, where $u(T,\cdot)$ is only assumed to be measurable. For $u=J^\pi$, however, \eqref{equation:observed-value-representation} shows that $M^{\mu,G}[J^\pi,q]$ is continuous on the whole interval $[t,T]$.
 
\medskip\noindent Notice that the true $q$-function $q^\pi$ satisfies \eqref{equation:candidate-integrability} for every initial condition,
behavior policy, and sampling grid. Indeed, by
\eqref{equation:q-definition}, there exist constants
$C_\pi>0$ and $m_\pi\geq1$ such that
\[
    |q^\pi(s,X_s,\alpha_s)|
    \leq
    2\|\psi\|_\infty |z_\pi(s,X_s)|
    +
    C_\pi
    \bigl(
        1+|X_s|^{m_\pi}+|\alpha_s|^{m_\pi}
    \bigr).
\]
Moreover, by Proposition~\ref{proposition:bsde-value} and the integrability transfer established thereafter,
\[
    z_\pi(\cdot,X_\cdot)=Z^{t,x,\pi}\in
    \mathcal H^p_{t,T}(\mathbb R^{n};\mathbb P,\mathbb F^G)\qquad\text{for every }p>1.
\]
In particular, taking $p=2$ and using Cauchy--Schwarz,
\[
    \int_t^T |z_\pi(s,X_s)|\,ds<+\infty,
    \qquad \mathbb P\text{-a.s.}
\]
Together with the moment estimates for $X$ and $\alpha$, this shows that $q^\pi$ satisfies
\eqref{equation:candidate-integrability}.

\begin{remark}
\label{remark:candidate-q-regularity}
In the martingale characterizations of \cite{JiaZhou2023}, the candidate $q$-function is assumed to be continuous. Such a regularity requirement would not be natural in our setting, since the $q$-function $q^\pi$ defined in
\eqref{equation:q-definition} depends on the Markovian BSDE integrand $z_\pi$, which is in general available only as a Borel representative defined up to $dt\otimes dx$-almost everywhere
equality. We therefore work with Borel candidate functions and state explicitly the pathwise integrability condition \eqref{equation:candidate-integrability} needed to ensure that the
observable process \eqref{equation:observable-process} is well defined.
\end{remark}

\subsection{Characterization with a known value function}

We first consider the case in which the value function $J^\pi$ of the target policy is known. The following result corresponds to \cite[Theorem~6]{JiaZhou2025Erratum}. It shows that
$q^\pi$ can be characterized through the martingale property of the observable process introduced above, using trajectories generated by an arbitrary admissible behavior policy $\mu$.

\begin{theorem}
\label{theorem:q-martingale-characterization}
Fix a target policy $\pi\in\Pi$ and a behavior policy $\mu\in\Pi$. Let $q:[0,T]\times\mathbb R^d\times A\longrightarrow\mathbb R$ be Borel measurable and suppose that \eqref{equation:candidate-integrability} holds for almost every initial condition $(t,x)$ and every sampling grid $G$. Then the following statements are equivalent:
\begin{enumerate}[(i)]
    \item $q(t,x,a)=q^\pi(t,x,a)$ $dt\otimes dx\otimes da$-almost everywhere;
    \item for almost every initial condition $(t,x)$ and every sampling grid $G$, the process $M^{\mu,G}[J^\pi,q]$ in \eqref{equation:observable-process} is an $(\mathbb F^G,\mathbb P)$-martingale.
\end{enumerate}
Moreover, if these equivalent conditions hold, then, for every behavior policy $\nu\in\Pi$, every initial condition $(t,x)$, and every sampling grid $G$, the process $M^{\nu,G}[J^\pi,q]$ is an $(\mathbb F^G,\mathbb P)$-martingale.
\end{theorem}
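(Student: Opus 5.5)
The plan is to compute the semimartingale decomposition of $M^{\mu,G}[J^\pi,q]$ under $\mathbb P$ explicitly, using the BSDE representation of $J^\pi$ along the sampled trajectory. Fix $(t,x)$, a behavior policy $\mu$ and a grid $G$.

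\emph{Step 1: the value along the sampled trajectory.} By Section~\ref{section:reference-dynamics}, under $\mathbb Q^{\mu,G}$ the process $X=X^{t,x,\mu,G}$ is the reference diffusion driven by $B^{\mu,G}$. Hence \eqref{equation:observed-value-representation} and Proposition~\ref{proposition:markovian-integrand} give
\[
Y_s^{t,x,\pi}=J^\pi(s,X_s),\qquad Z_s^{t,x,\pi}=z_\pi(s,X_s).
\]
The BSDE \eqref{equation:policy-bsde} then holds with $dB^{\mu,G}$. Since $B^{\mu,G}$ is also an $\mathbb F^G$-Brownian motion, the stochastic integral may be read in $\mathbb F^G$. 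We substitute $dB^{\mu,G}_s=dW_s+\psi(s,X_s,\alpha_s)\,ds$ from \eqref{equation:reference-brownian-motion} and apply It\^o's formula to $e^{-\beta(s-t)}Y_s$. This is legitimate because the identity is pathwise and $\mathbb P\sim\mathbb Q^{\mu,G}$. The $\beta Y$ terms cancel. Using the definition \eqref{equation:q-definition}, which gives $q^\pi=\langle z_\pi,\psi-\bar\psi_\pi\rangle+r-R_\pi$, we should obtain
\[
M_s^{\mu,G}[J^\pi,q]=J^\pi(t,x)+\int_t^s e^{-\beta(v-t)}\bigl(q^\pi-q\bigr)(v,X_v,\alpha_v)\,dv+\int_t^s e^{-\beta(v-t)}\langle z_\pi(v,X_v),dW_v\rangle .
\]
The terminal value matches because $J^\pi(T,\cdot)=g$. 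The last integral is a true $(\mathbb F^G,\mathbb P)$-martingale, since $Z^{t,x,\pi}\in\mathcal H^2_{t,T}(\mathbb R^n;\mathbb P,\mathbb F^G)$ by the integrability transfer after Proposition~\ref{proposition:bsde-value}. The middle term is well defined by \eqref{equation:candidate-integrability} and the integrability of $q^\pi$ along trajectories.

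\emph{Step 2: (i)$\Rightarrow$(ii) and the final assertion.} Suppose $q=q^\pi$ almost everywhere, and apply Proposition~\ref{proposition:densities} (equivalence \eqref{equation:sampled-null-equivalence} under $\mathbb P$) to $h=|q-q^\pi|$. Then $(q^\pi-q)(v,X_v,\alpha_v)=0$ holds $dv\otimes d\mathbb P$-a.e., so the drift term vanishes identically and $M$ is a martingale. This argument uses neither a specific $(t,x)$ nor a specific behavior policy. The same computation with $\nu$ in place of $\mu$ therefore yields the "moreover" statement for every $\nu$, every $(t,x)$ and every grid. (For $(t,x)$ outside the null set, $q$ is still integrable along the trajectory, since it equals $q^\pi$ a.e. along it.)

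\emph{Step 3: (ii)$\Rightarrow$(i).} If $M$ is a martingale, then $A_s\coloneqq\int_t^s e^{-\beta(v-t)}(q^\pi-q)(v,X_v,\alpha_v)\,dv$ is the difference of $M$, a constant, and a martingale. So $A$ is a continuous martingale of finite variation starting at $0$, hence $A\equiv0$. We conclude that $(q^\pi-q)(v,X_v,\alpha_v)=0$ $dv\otimes d\mathbb P$-a.e. on $[t,T]$. Proposition~\ref{proposition:densities} then gives $q=q^\pi$ $ds\otimes dy\otimes da$-a.e. on $(t,T)\times\mathbb R^d\times A$. Since (ii) holds for almost every $(t,x)$, we may choose admissible initial times $t_k\downarrow0$. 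Taking the countable union gives equality a.e. on $(0,T)\times\mathbb R^d\times A$, which is (i).

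The step I expect to need the most care is Step 1. The BSDE is posed in the Brownian filtration of $B^{\mu,G}$ under $\mathbb Q^{\mu,G}$, whereas the martingale property is required in $\mathbb F^G$ under $\mathbb P$. Passing between them requires three things: that $B^{\mu,G}$ stays an $\mathbb F^G$-Brownian motion, so that the stochastic integrals coincide; that $Z=z_\pi(\cdot,X_\cdot)$ holds $ds\otimes d\mathbb P$-a.e. by equivalence of the measures; and that the $W$-integral has enough integrability under $\mathbb P$ to be a true martingale rather than only a local martingale.
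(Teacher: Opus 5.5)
Your proposal is correct and follows essentially the same route as the paper. In both, the BSDE along the sampled trajectory under $\mathbb Q^{\mu,G}$, rewritten with $dB^{\mu,G}=dW+\psi\,ds$, gives the decomposition of $M^{\mu,G}[J^\pi,q]$; Proposition~\ref{proposition:densities} gives (i)$\Rightarrow$(ii); and for the converse a continuous finite-variation martingale must vanish, after which initial times $t_k\downarrow0$ are taken. Your remark that, for the exceptional initial conditions, integrability of $q$ along trajectories follows from $q=q^\pi$ a.e.\ along them makes explicit a point the paper leaves implicit.
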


\begin{proof}
Fix an initial condition $(t,x)$ and a sampling grid $G$ for which the assumptions of the theorem hold. Under $\mathbb Q^{\mu,G}$, the process $X$ solves the reference
equation \eqref{equation:reference-diff}, driven by $B^{\mu,G}$. Hence, by
Propositions~\ref{proposition:bsde-value}, \ref{proposition:continuity}, and
\ref{proposition:markovian-integrand},
\begin{align*}
    Y_s^{t,x,\pi}=J^\pi(s,X_s),&\qquad \mathbb Q^{\mu,G}\text{-a.s.},\; s\in[t,T]\\
    Z_s^{t,x,\pi}=z_\pi(s,X_s),&\qquad ds\otimes d\mathbb Q^{\mu,G}\text{-a.e.}
\end{align*}
Therefore, the BSDE \eqref{equation:policy-bsde} yields the semimartingale identity
\begin{align*}
    dJ^\pi(s,X_s)=
    -f_\pi
    \bigl(
        s,X_s,J^\pi(s,X_s),z_\pi(s,X_s)
    \bigr)\,ds+
    \left\langle
        z_\pi(s,X_s),dB_s^{\mu,G}
    \right\rangle .
\end{align*}
Using \eqref{equation:reference-brownian-motion}, it holds that $dB_s^{\mu,G}=dW_s+\psi(s,X_s,\alpha_s)ds$, for $s\in[t,T]$. Then, recalling the definition of $f_\pi$, we obtain
\begin{align*}
    dJ^\pi(s,X_s)&=\Big[
        \beta J^\pi(s,X_s)
        -R_\pi(s,X_s)+
        \left\langle
            z_\pi(s,X_s),
            \psi(s,X_s,\alpha_s)
            -\bar\psi_\pi(s,X_s)
        \right\rangle
    \Big]\,ds\\
    &\qquad+
    \left\langle
        z_\pi(s,X_s),dW_s
    \right\rangle .
\end{align*}
Since $R_\pi=\bar r_\pi-\gamma\ell_\pi$, the definition \eqref{equation:q-definition} of $q^\pi$ gives
\[
    -R_\pi(s,X_s)
    +
    \left\langle
        z_\pi(s,X_s),
        \psi(s,X_s,\alpha_s)-\bar\psi_\pi(s,X_s)
    \right\rangle
    =
    q^\pi(s,X_s,\alpha_s)-r(s,X_s,\alpha_s).
\]
Consequently,
\begin{align}
    d\left(
        e^{-\beta(s-t)}J^\pi(s,X_s)
    \right)
    &=
    e^{-\beta(s-t)}
    \bigl[
        q^\pi(s,X_s,\alpha_s)
        -r(s,X_s,\alpha_s)
    \bigr]\,ds
    \notag\\
    &\qquad+
    e^{-\beta(s-t)}
    \left\langle
        z_\pi(s,X_s),dW_s
    \right\rangle .
    \label{equation:discounted-value-dynamics}
\end{align}
Since $\mathbb Q^{\mu,G}$ and $\mathbb P$ are equivalent, the preceding identity \eqref{equation:discounted-value-dynamics} may be regarded as an identity under $\mathbb P$. Combining \eqref{equation:discounted-value-dynamics} with
\eqref{equation:observable-process}, we obtain
\begin{align}
    M_s^{\mu,G}[J^\pi,q]&=
    J^\pi(t,x)
    +
    \int_t^s
    e^{-\beta(v-t)}
    \left\langle
        z_\pi(v,X_v),dW_v
    \right\rangle
    \notag\\
    &\qquad+
    \int_t^s
    e^{-\beta(v-t)}
    \bigl[
        q^\pi(v,X_v,\alpha_v)
        -q(v,X_v,\alpha_v)
    \bigr]\,dv.
    \label{equation:martingale-decomposition}
\end{align}
Since
$z_\pi(\cdot,X_\cdot)\in
\mathcal H^2_{t,T}(\mathbb R^{n};\mathbb P,\mathbb F^G)$, the stochastic integral above is a square-integrable $(\mathbb F^G,\mathbb P)$-martingale.

\medskip\noindent Now we prove the equivalence \textnormal{(i)} $\Leftrightarrow$ \textnormal{(ii)}.

\medskip
\noindent
\emph{\textnormal{(i)} $\Rightarrow$ \textnormal{(ii)}.}
Suppose that
\[
    q=q^\pi
    \qquad
    dt\otimes dx\otimes da\text{-a.e.}
\]
Applying
Proposition~\ref{proposition:densities} to
$|q-q^\pi|$ gives
\[
    q(s,X_s,\alpha_s)
    =
    q^\pi(s,X_s,\alpha_s)
    \qquad
    ds\otimes d\mathbb P\text{-a.e.}
\]
Hence the finite-variation term in
\eqref{equation:martingale-decomposition} vanishes and
\[
    M_s^{\mu,G}[J^\pi,q]
    =
    J^\pi(t,x)+\int_t^s
    e^{-\beta(v-t)}
    \left\langle
        z_\pi(v,X_v),dW_v
    \right\rangle.
\]
Therefore $M^{\mu,G}[J^\pi,q]$ is an
$(\mathbb F^G,\mathbb P)$-martingale.

\noindent The same argument applies to any behavior policy $\nu\in\Pi$.
Indeed, Proposition~\ref{proposition:densities} yields
\[
    q(s,X_s^{t,x,\nu,G},\alpha_s^{\nu,G})
    =
    q^\pi(s,X_s^{t,x,\nu,G},\alpha_s^{\nu,G}),\qquad\text{$ds\otimes d\mathbb P$-a.e.},
\]
while the 
integrability argument for $q^\pi$ applies under any admissible behavior policy. Hence the process $M^{\nu,G}[J^\pi,q]$ is an $(\mathbb F^G,\mathbb P)$-martingale for every $\nu\in\Pi$, every initial condition, and every sampling grid.

\medskip
\noindent
\emph{\textnormal{(ii)} $\Rightarrow$ \textnormal{(i)}.}
Suppose now that
$M^{\mu,G}[J^\pi,q]$ is an
$(\mathbb F^G,\mathbb P)$-martingale. Define
\[
    \Lambda_s
    \coloneqq
    \int_t^s
    e^{-\beta(v-t)}
    \bigl[
        q^\pi(v,X_v,\alpha_v)
        -q(v,X_v,\alpha_v)
    \bigr]\,dv.
\]
By \eqref{equation:martingale-decomposition},
\[
    \Lambda_s
    =
    M_s^{\mu,G}[J^\pi,q]
    -
    J^\pi(t,x)
    -\int_t^s
    e^{-\beta(v-t)}
    \left\langle
        z_\pi(v,X_v),dW_v
    \right\rangle.
\]
Thus $\Lambda$ is a continuous
$(\mathbb F^G,\mathbb P)$-martingale. On the other hand, the pathwise integrability of $q$ and $q^\pi$ implies that $\Lambda$ has finite variation. Since $\Lambda_t=0$, and $\Lambda$ is a continuous finite-variation martingale,
\[
    \Lambda_s=0,
    \qquad
    s\in[t,T],
    \quad
    \mathbb P\text{-a.s.}
\]
Since $\Lambda$ is absolutely continuous in time, it follows that
\[
    q^\pi(s,X_s,\alpha_s)
    =
    q(s,X_s,\alpha_s)
    \qquad
    ds\otimes d\mathbb P\text{-a.e.}
\]
Applying
Proposition~\ref{proposition:densities} to
$|q-q^\pi|$ now yields
\[
    q^\pi(s,y,a)
    =
    q(s,y,a),\qquad\text{for $ds\otimes dy\otimes da$-a.e. $(s,y,a)\in(t,T)\times\mathbb R^d\times A$.}
\]
It remains only to remove the dependence on the initial time. Since the martingale property in \textnormal{(ii)} holds for almost every initial condition $(t,x)$, Fubini's theorem yields a set $I\subset(0,T)$ of full Lebesgue measure such that,
for every $t\in I$, the martingale property holds for almost every $x\in\mathbb R^d$. Choose a sequence $(t_j)_{j\geq1}\subset I$ with $t_j\downarrow0$ and, for each $j$, an initial state $x_j$ for which the preceding argument applies.
We then obtain
\[
    q=q^\pi
    \qquad
    ds\otimes dy\otimes da\text{-a.e. on }
    (t_j,T)\times\mathbb R^d\times A
\]
for every $j$. Since $\cup_{j\geq1}(t_j,T)=(0,T)$, we conclude that $q=q^\pi$, $dt\otimes dx\otimes da$-a.e., which proves \textnormal{(i)}.
\end{proof}

\begin{remark}
\label{remark:q-characterization-on-off-policy}
Theorem~\ref{theorem:q-martingale-characterization}
contains both the on-policy and off-policy characterizations of
$q^\pi$. Taking $\mu=\pi$ gives the on-policy characterization,
whereas an arbitrary $\mu\in\Pi$ gives the off-policy one. In
particular, the implication
\textnormal{(i)} $\Rightarrow$ \textnormal{(ii)} shows that the
martingale condition is valid under any admissible behavior policy,
while the converse shows that its validity under a single fixed
behavior policy is sufficient to identify $q^\pi$ up to
$dt\otimes dx\otimes da$-almost everywhere equality. These
statements correspond, respectively, to the three parts of
\cite[Theorem~6]{JiaZhou2025Erratum}.
\end{remark}

\subsection{Joint characterization of the value and the \texorpdfstring{$q$}{q}-function}

We now consider the case in which both the policy value and the $q$-function are unknown. In contrast with Theorem~\ref{theorem:q-martingale-characterization}, the martingale condition alone does not identify the pair uniquely. As in \cite[Theorem~7]{JiaZhou2025Erratum}, one must additionally
specify the terminal reward and impose the normalization associated with the target policy. These conditions select, among the pairs compatible with the martingale property, the value and $q$-function corresponding to the prescribed policy-evaluation problem.

\medskip\noindent Our formulation differs from that of \cite{JiaZhou2025Erratum} in that no $C^{1,2}$ regularity is imposed on the candidate value function, which is only required to be continuous before the terminal time. With this requirement, the class of candidates always contains the function to be identified: $J^\pi$ is continuous on $[0,T)\times\mathbb R^d$ by Proposition~\ref{proposition:continuity}. This is not the case when $C^{1,2}$ regularity is imposed, since $J^\pi$ need not be of class $C^{1,2}$ (see Remark~\ref{remark:example}). Since $u$ is only continuous, the proof applies no chain rule to $u(s,X_s)$, not even It\^o's formula with generalized derivatives (see Remark~\ref{remark:generalized-derivatives}).

\begin{theorem}
\label{theorem:joint-martingale-characterization}
Fix a target policy $\pi\in\Pi$ and a behavior policy $\mu\in\Pi$. Let $u:[0,T]\times\mathbb R^d\longrightarrow\mathbb R$ be Borel measurable with polynomial growth and continuous on $[0,T)\times\mathbb R^d$, and let $q:[0,T]\times\mathbb R^d\times A\longrightarrow\mathbb R$ be Borel measurable and suppose that
\eqref{equation:candidate-integrability} holds for almost every initial condition $(t,x)$ and every sampling grid $G$. Suppose that
\begin{align}
    u(T,x)=g(x),&\qquad x\in\mathbb R^d,\label{equation:joint-terminal-condition}\\
    \int_A\bigl[q(t,x,a)-\gamma\log\pi(a\,|\, t,x)
    \bigr]\pi(a\,|\, t,x)\,da
    =0,&\qquad\text{for $dt\otimes dx$-a.e. $(t,x)$},\label{equation:joint-normalization}
\end{align}
where the integral in \eqref{equation:joint-normalization} is assumed to be absolutely
convergent. Then the following statements are equivalent:
\begin{enumerate}[(i)]
    \item $u(t,x)=J^\pi(t,x)$ for every $(t,x)\in[0,T]\times\mathbb R^d$ and $q(t,x,a)=q^\pi(t,x,a)$ $dt\otimes dx\otimes da$-almost everywhere;

    \item for almost every initial condition $(t,x)$ and every sampling grid $G$, the process $M^{\mu,G}[u,q]$ in \eqref{equation:observable-process} is an $(\mathbb F^G,\mathbb P)$-martingale.
\end{enumerate}
Moreover, if these equivalent conditions hold, then, for every behavior policy $\nu\in\Pi$, every initial condition $(t,x)$, and every sampling grid $G$, the process
$M^{\nu,G}[u,q]$ is an $(\mathbb F^G,\mathbb P)$-martingale.
\end{theorem}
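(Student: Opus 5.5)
The direction (i) $\Rightarrow$ (ii) is immediate from Theorem~\ref{theorem:q-martingale-characterization}. If $u=J^\pi$ everywhere and $q=q^\pi$ a.e., then $M^{\mu,G}[u,q]=M^{\mu,G}[J^\pi,q]$, and that theorem gives the martingale property for every behavior policy $\nu$, every initial condition and every grid. This also yields the final assertion. The work is in (ii) $\Rightarrow$ (i), where no chain rule is available for $u$.

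Fix $(t,x)$ and $G$ for which (ii) holds, and work under $\mathbb Q^{\mu,G}$, where $X$ solves the reference equation driven by $B^{\mu,G}$ and is $\mathbb F^{B^{\mu,G}}$-adapted. The plan is to recover a semimartingale decomposition of $u(s,X_s)$ from the martingale property, by martingale representation.

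\emph{Step 1: project onto the Brownian filtration.} The increments of $M$ involve $\alpha$, which depends on the $U_i$. I therefore condition on $\mathcal F^{B^{\mu,G}}_s$. Since the $U_i$ are independent of $B^{\mu,G}$ (Remark~\ref{remark:reference-randomization}), integrating out $\alpha$ on $[t_i,t_{i+1})$ gives the averaged integrand
\[
\int_A\bigl[r(v,X_v,a)-q(v,X_v,a)\bigr]\mu(a\,|\,t_i,X_{t_i})\,da .
\]
I expect the cleaner route is to avoid this projection: apply the Kunita--Watanabe-type representation in $\mathbb F^G$ directly. Under $\mathbb P$, $\mathbb F^G$ is generated by $W$ together with finitely many independent variables revealed at deterministic times. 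Every $(\mathbb F^G,\mathbb P)$-martingale is then continuous on each $[t_i,t_{i+1})$ and equals a stochastic integral $\int\langle\zeta,dW\rangle$ plus possible jumps at the $t_i$. Continuity of $M$ on $[t,T)$, and at $T$ by a limiting argument using $u(T,\cdot)=g$ and integrability, forces those jumps to vanish. Hence $M_s=M_t+\int_t^s\langle\zeta_v,dW_v\rangle$ with $\zeta$ progressive. (Integrability for this step comes from the polynomial growth of $u$ and the moment bounds.)

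\emph{Step 2: compare with the BSDE.} Set $Y_s\coloneqq u(s,X_s)$ and $Z_s\coloneqq e^{\beta(s-t)}\zeta_s$, and rewrite $dW=dB^{\mu,G}-\psi(s,X_s,\alpha_s)\,ds$. Then
\[
dY_s=\bigl[\beta Y_s+q(s,X_s,\alpha_s)-r(s,X_s,\alpha_s)-\langle Z_s,\psi(s,X_s,\alpha_s)\rangle\bigr]ds+\langle Z_s,dB^{\mu,G}_s\rangle .
\]
The drift still depends on $\alpha$, so the key step is to show that the $\alpha$-dependence of $q-r-\langle Z,\psi\rangle$ is trivial. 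Take $\Phi(s,x,a)=q-r-\langle z,\psi\rangle$. The decomposition in Step~1 can alternatively be computed in $\mathbb F^{B^{\mu,G}}$ via the projected martingale. By uniqueness of the canonical decomposition, the drift must be $\mathbb F^{B}$-adapted up to null sets. By Proposition~\ref{proposition:densities} and the positivity of $\mu$, this gives a function $c(s,y)$ with $\Phi(s,y,a)=c(s,y)$ for a.e. $(s,y,a)$. Here I must also argue that $Z_s$ is $\mathbb F^B$-measurable, which I will obtain by projecting $Y$ (itself $\mathbb F^B$-adapted) and using the uniqueness of the martingale part.

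\emph{Step 3: identify the driver.} Integrate this identity against $\pi(a\,|\,s,y)$ and use the normalization \eqref{equation:joint-normalization}. This gives
\[
c=\gamma\ell_\pi-\bar r_\pi-\langle Z,\bar\psi_\pi\rangle ,
\]
so that $dY_s=-f_\pi(s,X_s,Y_s,Z_s)\,ds+\langle Z_s,dB_s\rangle$ with $Y_T=g(X_T)$. By uniqueness in Proposition~\ref{proposition:bsde-value}, $Y=Y^{t,x,\pi}$ and $Z=Z^{t,x,\pi}=z_\pi(s,X_s)$. In particular $u(t,x)=J^\pi(t,x)$ for a.e.\ $(t,x)$, and by continuity of both functions on $[0,T)\times\mathbb R^d$ (Proposition~\ref{proposition:continuity}) this holds everywhere there. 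At $T$ it holds by \eqref{equation:joint-terminal-condition}. Then $q=q^\pi$ a.e.\ follows from Theorem~\ref{theorem:q-martingale-characterization}, using the sequence $t_j\downarrow0$ argument as before.

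The main obstacle is Step~2: justifying that the representation integrand and the drift are Brownian-measurable, so that the $a$-dependence can be isolated, within the enlarged filtration $\mathbb F^G$.
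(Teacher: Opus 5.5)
Your architecture matches the paper's proof. It has the same ingredients: interval-wise Brownian representation of $M^{\mu,G}[u,q]$ in $\mathbb F^G$, glued by continuity; the substitution $dW=dB^{\mu,G}-\psi\,ds$; $\mathbb F^B$-adaptedness of drift and integrand; elimination of the action through the law of $\alpha$ and $\mu>0$; the normalization to recover $f_\pi$; BSDE uniqueness and continuity; and Theorem~\ref{theorem:q-martingale-characterization} for $q$.

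However, the step you yourself flag as the main obstacle is left open, and the projection you propose does not close it.

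\emph{First}, $M$ is a martingale under $\mathbb P$, whereas the independence of $(U_i)$ from $B^{\mu,G}$ in Remark~\ref{remark:reference-randomization} holds under $\mathbb Q^{\mu,G}$. Under $\mathbb P$ the $U_i$ enter the drift of $B^{\mu,G}$, so ``integrating out $\alpha$'' in the projected martingale is not justified.

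\emph{Second}, even projecting the $\mathbb Q$-decomposition $dY_s=-S_s\,ds+\langle Z_s,dB_s\rangle$ onto $\mathbb F^B$ needs integrability you do not have. The function $q$ is only pathwise integrable, and $Z$ only satisfies $\int_t^T|Z_s|^2ds<\infty$ a.s. The natural localizing times are $\mathbb F^G$- but not $\mathbb F^B$-stopping times, so the localized $Y$ is no longer $\mathbb F^B$-adapted.

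The paper uses Stricker's theorem~\cite{Stricker1977} instead. Since $Y=u(\cdot,X)$ is $\mathbb F^B$-adapted and a continuous $(\mathbb F^G,\mathbb Q)$-semimartingale, it is an $\mathbb F^B$-semimartingale. Brownian representation writes its local martingale part as $\int\langle Z',dB\rangle$. This is also an $\mathbb F^G$-local martingale, because $B$ is an $(\mathbb F^G,\mathbb Q)$-Brownian motion. Uniqueness of the continuous semimartingale decomposition in $\mathbb F^G$ then yields $\mathbb F^B$-progressive versions of $S$ and $Z$, with no integrability required. A more elementary substitute also works: the covariation of $Y$ and $B$ equals $\int_t^\cdot Z_v\,dv$, and it is a limit in probability of sums of products of $\mathbb F^B$-adapted increments.

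Your claim of a deterministic $c(s,y)$ with $\Phi(s,y,a)=c(s,y)$ a.e.\ is also not available at this stage. $Z_s$ is not yet known to be of the form $z(s,X_s)$, so Proposition~\ref{proposition:densities} cannot convert a pathwise identity into one in $(s,y,a)$. What you obtain, and all you need, is
$q(s,X_s,a)=r(s,X_s,a)+\langle Z_s,\psi(s,X_s,a)\rangle-S_s-\beta Y_s$, $ds\otimes d\mathbb Q\otimes da$-a.e.
This holds because, under $\mathbb Q$ and given $\mathcal F^B_T$, the action $\alpha_s$ for $s\in[t_i,t_{i+1})$ has density $\mu(\cdot\,|\,t_i,X_{t_i})>0$. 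Integrating against $\pi(a\,|\,s,X_s)\,da$, with \eqref{equation:joint-normalization} evaluated along $X$, gives $S_s=f_\pi(s,X_s,Y_s,Z_s)$.

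Two smaller points remain.
\begin{enumerate}
\item Uniqueness in Proposition~\ref{proposition:bsde-value} holds in $\mathcal S^p\times\mathcal H^p$, so you must upgrade $Z$ from a.s.\ square integrability to $\mathcal H^2$. The paper does this using $Y\in\mathcal S^2$ and \cite[Lemma~3.1]{BriandDelyonHuPardouxStoica2003}.
\item Continuity of $M$ at $T$ comes from $\mathcal F^G_{T-}=\mathcal F^G_T$, since no randomization is revealed at $T$. It does not come from $u(T,\cdot)=g$, which gives no control of left limits because $g$ is only measurable.
\end{enumerate}
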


\begin{proof}
We prove the two implications separately.

\medskip\noindent
\emph{\textnormal{(i)} $\Rightarrow$ \textnormal{(ii)}.}
Suppose that $u(t,x)=J^\pi(t,x)$ for all $(t,x)\in[0,T]\times\mathbb R^d$ and $q(t,x,a)=q^\pi(t,x,a)$ $dt\otimes dx\otimes da$-almost everywhere. Then the martingale property follows directly from Theorem~\ref{theorem:q-martingale-characterization}. The same theorem shows that it holds for every admissible behavior policy, every initial condition, and every sampling grid.

\medskip\noindent
\emph{\textnormal{(ii)} $\Rightarrow$ \textnormal{(i)}.}
Fix an initial condition $(t,x)$ and a sampling grid $G$ for which the assumptions of the theorem hold, and write $\mathbb Q=\mathbb Q^{\mu,G}$, $B=B^{\mu,G}$, $Y_s=u(s,X_s)$. We first reconstruct a semimartingale decomposition of $Y$ from the martingale property of $M^{\mu,G}[u,q]$. On each interval
$[t_i,t_{i+1})$, no new randomization variable is revealed after time $t_i$, and the filtration is generated by $\mathcal F_{t_i}^G$ together with the subsequent Brownian increments. Since these increments are independent of $\mathcal F_{t_i}^G$ and $M^{\mu,G}[u,q]$ is an $(\mathbb F^G,\mathbb P)$-martingale, the Brownian martingale representation theorem, yields an $\mathbb R^n$-valued, $\mathbb F^G$-predictable process $H^i$ such that
\[
    M_s^{\mu,G}[u,q]-M_{t_i}^{\mu,G}[u,q]
    =
    \int_{t_i}^s
    \langle H_v^i,dW_v\rangle,
    \qquad s\in[t_i,t_{i+1}),
\]
with
\[
    \int_{t_i}^{t_{i+1}}|H_v^i|^2\,dv<+\infty,
    \qquad \mathbb P\text{-a.s.}
\]
Setting $Z_v^i\coloneqq e^{\beta(v-t)}H_v^i$, we equivalently obtain
\begin{equation}\label{equation:representation-M}
    M_s^{\mu,G}[u,q]-M_{t_i}^{\mu,G}[u,q]
    =
    \int_{t_i}^s
    e^{-\beta(v-t)}
    \langle Z_v^i,dW_v\rangle.
\end{equation}
At each intermediate sampling time, the filtration is enlarged by a new randomization variable. Since $M^{\mu,G}[u,q]$ is continuous on $[t,T)$, no martingale jump occurs at these times, and the
interval-wise representations can be concatenated. Moreover, since $\mathcal F^G_{T^-}=\mathcal F^G_T$ by construction,
\[
    M_{T^-}^{\mu,G}[u,q] = \mathbb E\left[ M_{T}^{\mu,G}[u,q]\,\big|\, \mathcal F^G_{T^-}\right] =  M_{T}^{\mu,G}[u,q],\qquad \mathbb P\text{-a.s.}
\]
Hence the martingale $M^{\mu,G}[u,q]$ is continuous on $[t,T]$ and the representation \eqref{equation:representation-M} holds on the whole last interval $[t_{N_G-1},T]$. Defining
\[
    Z_s
    \coloneqq
    \sum_{i=0}^{N_G-1}
    Z_s^i\mathbbm 1_{(t_i,t_{i+1}]}(s),
\]
we then obtain an $\mathbb F^G$-predictable process satisfying
\[
    \int_t^T|Z_s|^2\,ds<+\infty,
    \qquad \mathbb P\text{-a.s.},
\]
and
\[
    M_s^{\mu,G}[u,q]
    =
    u(t,x)
    +
    \int_t^s
    e^{-\beta(v-t)}
    \langle Z_v,dW_v\rangle,
    \qquad s\in[t,T].
\]
The process $Z$ is unique up to $ds\otimes d\mathbb P$-almost everywhere equality.

\noindent Using the definition of $M^{\mu,G}[u,q]$ and
\[
    dW_s
    =
    dB_s
    -
    \psi(s,X_s,\alpha_s)\,ds,
\]
we obtain
\begin{equation}
\label{equation:joint-reconstructed-semimartingale}
    dY_s
    =
    -S_s\,ds
    +
    \langle Z_s,dB_s\rangle,
\end{equation}
where
\begin{equation}
\label{equation:joint-reconstructed-drift}
    S_s
    =
    r(s,X_s,\alpha_s)
    -
    q(s,X_s,\alpha_s)
    +
    \langle Z_s,\psi(s,X_s,\alpha_s)\rangle
    -
    \beta Y_s.
\end{equation}
By \eqref{equation:candidate-integrability}, the polynomial growth
of $u$ and $r$, the moment estimates for $X$ and $\alpha$, the
boundedness of $\psi$, and the local square integrability of $Z$,
\[
    \int_t^T |S_s|\,ds<+\infty,
    \qquad \mathbb Q\text{-a.s.}
\]
Thus \eqref{equation:joint-reconstructed-semimartingale} is an
$\mathbb F^G$-semimartingale decomposition of $Y$ under
$\mathbb Q$. On the other hand, since $X$ is adapted to $\mathbb F^B$, so is
$Y_s=u(s,X_s)$. Since
$\mathbb F^B\subseteq\mathbb F^G$, Stricker's theorem \cite{Stricker1977} (see also \cite{BieleckiJakubowskiJeanblancNieweglowski2021}) implies that
$Y$ is also an $\mathbb F^B$-semimartingale.
Its local martingale part admits a representation with respect to $B$, since $\mathbb F^B$ is a Brownian filtration. Comparing this decomposition with
\eqref{equation:joint-reconstructed-semimartingale} in the larger filtration $\mathbb F^G$, and using uniqueness of the continuous
semimartingale decomposition, shows that $S$ and $Z$ admit $\mathbb F^B$-progressively measurable versions. We henceforth work with these versions.

\noindent Rearranging \eqref{equation:joint-reconstructed-drift}, we have
\[
    q(s,X_s,\alpha_s)
    =
    r(s,X_s,\alpha_s)
    +
    \langle Z_s,\psi(s,X_s,\alpha_s)\rangle
    -
    S_s
    -
    \beta Y_s,
    \qquad
    ds\otimes d\mathbb Q\text{-a.e.}
\]
Since $S$, $Z$, $Y$, and $X$ are $\mathbb F^B$-adapted,
the conditional law of $\alpha_s$ given $\mathcal F_s^B$,
together with the strict positivity of the behavior-policy
density, yields
\begin{equation}
\label{equation:joint-reconstructed-q}
    q(s,X_s,a)
    =
    r(s,X_s,a)
    +
    \langle Z_s,\psi(s,X_s,a)\rangle
    -
    S_s
    -
    \beta Y_s,\qquad\text{$ds\otimes d\mathbb Q\otimes da$-a.e.}
\end{equation}
By Proposition~\ref{proposition:densities}, the normalization \eqref{equation:joint-normalization} can be
evaluated along the reference state $X$. Integrating \eqref{equation:joint-reconstructed-q} with respect to
$\pi(a\,|\,s,X_s)\,da$ and using
\eqref{equation:joint-normalization}, we obtain
\[
    S_s
    =
    \langle Z_s,\bar\psi_\pi(s,X_s)\rangle
    +
    R_\pi(s,X_s)
    -
    \beta Y_s
    =
    f_\pi(s,X_s,Y_s,Z_s),
    \qquad
    ds\otimes d\mathbb Q\text{-a.e.}
\]
Together with
\eqref{equation:joint-reconstructed-semimartingale} and the
terminal condition $Y_T=u(T,X_T)=g(X_T)$, this shows that $(Y,Z)$ satisfies the policy BSDE
\eqref{equation:policy-bsde}.

\noindent To apply the uniqueness result for
\eqref{equation:policy-bsde}, it suffices to verify that the reconstructed pair $(Y,Z)$ belongs to $\mathcal S^p_{t,T}(\mathbb R;\mathbb Q,\mathbb F^B)\times
\mathcal H^p_{t,T}(\mathbb R^{n};\mathbb Q,\mathbb F^B)$ for some $p>1$. We verify this with $p=2$.

\noindent Since $u$ has polynomial growth and the reference diffusion has moments of every order, $Y\in
    \mathcal S^2_{t,T}
    (\mathbb R;\mathbb Q,\mathbb F^B)$. Moreover, $f_\pi(\cdot,X_\cdot,0,0)$ is square integrable by the polynomial growth of $R_\pi$. Since $f_\pi$ is uniformly Lipschitz in $(y,z)$ and $\int_t^T|Z_s|^2\,ds<+\infty$ almost surely, \cite[Lemma~3.1]{BriandDelyonHuPardouxStoica2003} yields $Z\in
    \mathcal H^2_{t,T}
    (\mathbb R^{n};\mathbb Q,\mathbb F^B)$. Hence
\begin{align*}
    Y_s
    =
    Y_s^{t,x,\pi},
    \quad
    s\in[t,T],
    \quad
    \mathbb Q\text{-a.s.},\qquad\qquad Z_s = Z^{t,x,\pi}_s,\quad ds\otimes d\mathbb Q\text{-a.e.} 
\end{align*}
Proposition~\ref{proposition:bsde-value} then gives
\[
    u(s,X_s)
    =
    J^\pi(s,X_s),
    \qquad
    s\in[t,T],
    \quad
    \mathbb Q\text{-a.s.}
\]
In particular, since $X_t=x$, $u(t,x)=J^\pi(t,x)$ for every initial condition in the full-measure set for which the martingale property and
\eqref{equation:candidate-integrability} hold. Hence
\[
    u=J^\pi
    \qquad
    dt\otimes dx\text{-a.e.}
\]
Both $u$ and $J^\pi$ are continuous on $[0,T)\times\mathbb R^d$ and they coincide at time $T$ by \eqref{equation:joint-terminal-condition}. Hence, we conclude that
\begin{equation}
\label{equation:joint-value-identification}
    u(t,x)
    =
    J^\pi(t,x),
    \qquad
    (t,x)\in[0,T]\times\mathbb R^d.
\end{equation}
It remains to identify $q$. By
\eqref{equation:joint-value-identification}, condition
\textnormal{(ii)} becomes
\[
    M^{\mu,G}[J^\pi,q]
    \quad\text{is an }
    (\mathbb F^G,\mathbb P)\text{-martingale}
\]
for almost every initial condition and every sampling grid.
Theorem~\ref{theorem:q-martingale-characterization} therefore
yields
\[
    q=q^\pi
    \qquad
    dt\otimes dx\otimes da\text{-a.e.},
\]
which concludes the proof.
\end{proof}

\begin{remark}
\label{remark:joint-characterization-on-off-policy}
The observations of Remark~\ref{remark:q-characterization-on-off-policy} on the behavior policy apply to Theorem~\ref{theorem:joint-martingale-characterization} as well. As in \cite[Theorem~7]{JiaZhou2025Erratum}, the target policy is specified through the
normalization \eqref{equation:joint-normalization}.

\medskip\noindent 
The additional optimality statement in
\cite[Theorem~7]{JiaZhou2025Erratum} is already contained in our policy improvement result (Theorem~\ref{theorem:policy-improvement}). Indeed, if, in addition, $\pi$ coincides with the Gibbs policy generated by the candidate $q$, then the preceding
characterization gives $q=q^\pi$ and $u=J^\pi$. Hence $\pi$ is a fixed point of the Gibbs update, and
Theorem~\ref{theorem:policy-improvement} yields $J^\pi=J^*$. Thus $\pi$ is optimal within $\Pi$ and $u=J^*$.
\end{remark}

\section{Optimal value and \texorpdfstring{$q$}{q}-function} \label{section:optimal-value-and-q}

We now turn from the evaluation of a fixed target policy to the direct characterization of the optimal value
\[
    J^*(t,x)=\sup_{\pi\in\Pi}J^\pi(t,x).
\]
The estimates obtained for individual policy values depend in general on the policy and therefore do not provide, by themselves, a uniform bound on $J^*$. We instead construct a BSDE whose driver is obtained by maximizing, pointwise, the entropy-regularized policy drivers over probability densities on the
action space. 

\subsection{The optimal BSDE}

For $(t,x,y,z)\in[0,T]\times\mathbb R^d\times\mathbb R\times\mathbb R^{n}$, define
\begin{equation}
\label{equation:optimal-driver}
    f_*(t,x,y,z)\coloneqq\gamma\log\left(
    \int_A\exp\left(\frac{\langle z,\psi(t,x,a)\rangle+r(t,x,a)}{\gamma}
    \right)\,da\right)-\beta y.
\end{equation}

\noindent To ensure that this driver is finite and satisfies the growth conditions required by the standard BSDE theory, we impose an additional assumption on the running reward $r$ throughout the section.

\begin{assumption}
\label{assumption:separable-reward}
There exist a Borel measurable function $\widetilde r:[0,T]\times\mathbb R^d\to\mathbb R$, a Borel measurable function $\chi:A\to\mathbb R$, and constants $C>0$ and $m\geq1$ such that, for every $(t,x,a)\in[0,T]\times\mathbb R^d\times A$,
\begin{align}
    &r(t,x,a)\leq\widetilde r(t,x)+\chi(a),
    \label{equation:separable-reward}\\
    &|\widetilde r(t,x)|\leq C(1+|x|^m),
    \label{equation:separable-reward-growth}\\
    &\int_A\exp\left(\frac{\chi(a)}{\gamma}\right)\,da
    <+\infty.
    \label{equation:action-reward-integrability}
\end{align}
\end{assumption}

\noindent Since $A$ has positive Lebesgue measure and $\chi$ is finite, the integral in \eqref{equation:action-reward-integrability} is strictly positive.

\begin{remark}\label{remark:bounded-action-space}
If the action space $A$ is bounded, Assumption~\ref{assumption:separable-reward} is automatically satisfied, and therefore imposes no restriction beyond Assumption~\ref{assumption:coefficients}. Indeed, let $R>0$ be such that $|a|\leq R$ for every $a\in A$. Then \eqref{equation:reward-growth} yields
\[
    r(t,x,a)\leq C\bigl(1+|x|^m+R^m\bigr),\qquad (t,x,a)\in[0,T]\times\mathbb R^d\times A,
\]
so that \eqref{equation:separable-reward}--\eqref{equation:action-reward-integrability} hold with $\widetilde r(t,x)\coloneqq C(1+|x|^m+R^m)$ and $\chi\equiv0$ $($with $C$ replaced by the larger constant $C(1+R^m)$ in \eqref{equation:separable-reward-growth}$)$, the integral in \eqref{equation:action-reward-integrability} being the Lebesgue measure of $A$, which is finite.

\medskip\noindent 
When $A$ is unbounded, by contrast, the standing growth condition \eqref{equation:reward-growth} alone does not ensure the finiteness of the action integral in \eqref{equation:optimal-driver}. For instance, if $A=\mathbb R$ and $r(t,x,a)=|a|$, then $r$ satisfies \eqref{equation:reward-growth}, but
\[
    \int_{\mathbb R}
    \exp\left(\frac{r(t,x,a)}{\gamma}\right)\,da
    =
    \int_{\mathbb R}
    \exp\left(\frac{|a|}{\gamma}\right)\,da
    =
    +\infty.
\]
\end{remark}

\medskip\noindent
The next result states the properties of $f_*$ that will be used below.

\begin{proposition}
\label{proposition:optimal-driver}
The driver $f_*$ is finite and Borel measurable. Moreover, there exist constants $C>0$ and $m\geq1$ such that
\begin{align}
    |f_*(t,x,y,z)-f_*(t,x,y',z')|
    &\leq
    \beta|y-y'|+\|\psi\|_\infty|z-z'|,
    \label{equation:optimal-driver-lipschitz}\\
    |f_*(t,x,0,0)|
    &\leq
    C(1+|x|^m),
    \label{equation:optimal-driver-growth}
\end{align}
for every $(t,x)\in[0,T]\times\mathbb R^d$,
$y,y'\in\mathbb R$, and $z,z'\in\mathbb R^{n}$.

\noindent In addition, for every $\nu\in\Pi$,
\begin{equation}
\label{equation:driver-domination}
    f_\nu(t,x,y,z)\leq f_*(t,x,y,z),\qquad \text{for every $(t,x,y,z)$.}
\end{equation}
\end{proposition}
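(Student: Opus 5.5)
The plan is to write $f_*(t,x,y,z)=\gamma\Lambda(t,x,z)-\beta y$, where
\[
\Lambda(t,x,z)\coloneqq\log\int_A\exp\bigl(\gamma^{-1}(\langle z,\psi(t,x,a)\rangle+r(t,x,a))\bigr)\,da ,
\]
and to study the log-partition function $\Lambda$ directly.

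\emph{Finiteness and measurability.} For the upper bound, use $|\psi|\le\|\psi\|_\infty$ and \eqref{equation:separable-reward}. The integrand is then bounded by
\[
\exp\bigl(\gamma^{-1}(|z|\|\psi\|_\infty+\widetilde r(t,x))\bigr)\exp(\chi(a)/\gamma),
\]
which is integrable by \eqref{equation:action-reward-integrability}. For the lower bound, note that the integrand is strictly positive and $A$ has positive measure, so the integral is positive. Hence $\Lambda$ is finite. Measurability in $(t,x,z)$ follows from Tonelli's theorem, since the integrand is a nonnegative Borel function of $(t,x,z,a)$.

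\emph{Lipschitz bound \eqref{equation:optimal-driver-lipschitz}.} For fixed $(t,x)$, write $\langle z',\psi\rangle\le\langle z,\psi\rangle+\|\psi\|_\infty|z'-z|$ inside the exponential. Integrating and taking logarithms gives $\gamma\Lambda(t,x,z')\le\gamma\Lambda(t,x,z)+\|\psi\|_\infty|z-z'|$. Symmetry gives the reverse inequality, and the $y$-part is trivial. So no differentiation under the integral sign is needed.

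\emph{Growth bound \eqref{equation:optimal-driver-growth}.} This is the main obstacle, because $r$ is only bounded above in separable form and we need a lower bound on $\Lambda(t,x,0)$.
\begin{itemize}
\item The upper bound is immediate: $\gamma\Lambda(t,x,0)\le\widetilde r(t,x)+\gamma\log\int_Ae^{\chi/\gamma}\,da$, which has polynomial growth.
\item For the lower bound, use Jensen's inequality with a fixed reference density. Take $\rho_0\in\Pi$, for example the density proportional to $e^{-|a|^2}$ on $A$. Then
\[
\gamma\Lambda(t,x,0)=\gamma\log\int_A e^{r/\gamma-\log\rho_0}\rho_0\,da\ \ge\ \int_A(r-\gamma\log\rho_0)\rho_0\,da .
\]
\item The right-hand side is bounded below by $-C(1+|x|^m)$. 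This uses \eqref{equation:reward-growth} together with the finiteness of the moments and entropy of $\rho_0$, that is, \eqref{equation:policy-moments}--\eqref{equation:entropy-integrability}.
\end{itemize}

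\emph{Domination \eqref{equation:driver-domination}.} This is the same Gibbs variational inequality as in Lemma~\ref{lemma:gibbs-variational}, now applied with a general $z$. Fix $\nu\in\Pi$ and $(t,x,z)$. Let $p_*$ be the Gibbs density proportional to $\exp((\langle z,\psi\rangle+r)/\gamma)$; it is well defined by the finiteness step above. Then
\[
\int_A\bigl(\langle z,\psi\rangle+r-\gamma\log\nu\bigr)\nu\,da=\gamma\Lambda(t,x,z)-\gamma D_{\mathrm{KL}}(\nu\,\|\,p_*)\le\gamma\Lambda(t,x,z).
\]
The integrals are absolutely convergent because $\psi$ is bounded, $r$ has polynomial growth, and $\nu$ satisfies the moment and entropy bounds. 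The left-hand side equals $\langle z,\bar\psi_\nu\rangle+R_\nu$. Subtracting $\beta y$ from both sides gives $f_\nu\le f_*$ everywhere, not just almost everywhere, since $p_*$ is defined pointwise.
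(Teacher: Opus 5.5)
Your proof is correct. Its skeleton matches the paper's: the upper bound from \eqref{equation:separable-reward}, the multiplicative comparison inside the exponential for the Lipschitz bound, and a Jensen-type inequality for \eqref{equation:driver-domination}. The one genuinely different step is the lower bound on $f_*(t,x,0,0)$. The paper uses the pointwise bound $r(t,x,a)\geq -C(1+|x|^m+|a|^m)$ from \eqref{equation:reward-growth}, together with $0<\int_A e^{-C|a|^m/\gamma}\,da<+\infty$. This gives in one stroke a quantitative two-sided bound on $f_*(t,x,0,z)$ for every $z$ and the positivity of the partition function, without any reference to $\Pi$. You instead obtain the lower bound as the special case $R_{\rho_0}(t,x)=f_{\rho_0}(t,x,0,0)\leq f_*(t,x,0,0)$ of the domination inequality, for a fixed admissible density $\rho_0$. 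This reuses the variational argument and shows that the lower growth of $f_*$ is inherited from any single admissible policy. The cost is that it relies on $\Pi\neq\emptyset$ and on the moment and entropy bounds of $\rho_0$. Your positivity argument for finiteness is then only qualitative, which is enough here. For the domination itself, your KL identity is the exact form of the paper's Jensen step: the Jensen gap equals $\gamma D_{\mathrm{KL}}(\nu\,\|\,p_*)$. Your version needs the partition function to be finite, which you have already shown, whereas the paper's Jensen step holds even if it is infinite. You also justify Borel measurability via Tonelli, which the paper leaves implicit.
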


\begin{proof}
By \eqref{equation:separable-reward} and boundedness of $\psi$,
\[
    \int_A
    \exp\left(
        \frac{
            \langle z,\psi(t,x,a)\rangle+r(t,x,a)
        }{\gamma}
    \right)\,da\leq
    \exp\left(
        \frac{
            \|\psi\|_\infty|z|+\widetilde r(t,x)
        }{\gamma}
    \right)
    \int_A
    \exp\left(\frac{\chi(a)}{\gamma}\right)\,da,
\]
where, by \eqref{equation:action-reward-integrability}, 
\[
    0<
    \int_A
    \exp\left(\frac{\chi(a)}{\gamma}\right)\,da
    <+\infty.
\]
On the other hand, \eqref{equation:reward-growth} implies $r(t,x,a)
    \geq
    -C(1+|x|^m+|a|^m)$, and therefore
\begin{align*}
  \int_A
    \exp\left(
        \frac{
            \langle z,\psi(t,x,a)\rangle+r(t,x,a)
        }{\gamma}
    \right)\,da\geq
    \exp\left(
        -\frac{
            \|\psi\|_\infty|z|
            +C(1+|x|^m)
        }{\gamma}
    \right)
    \int_A
    \exp\left(
        -\frac{C|a|^m}{\gamma}
    \right)\,da.
\end{align*}
Since $A$ has positive Lebesgue measure and $a\longmapsto\exp\left(-\frac{C|a|^m}{\gamma}\right)$ is integrable on $\mathbb R^k$, we have
\[
    0<
    \int_A
    \exp\left(-\frac{C|a|^m}{\gamma}\right)\,da
    <+\infty.
\]
Hence the action integral in
\eqref{equation:optimal-driver} is finite and strictly positive. Taking logarithms gives
\begin{align}
    -\|\psi\|_\infty|z|
    -C(1+|x|^m)
    +
    &\gamma\log
    \int_A
    \exp\left(
        -\frac{C|a|^m}{\gamma}
    \right)\,da\notag\\
    &\leq
    f_*(t,x,0,z)\leq
    \|\psi\|_\infty|z|
    +\widetilde r(t,x)
    +
    \gamma\log
    \int_A
    \exp\left(\frac{\chi(a)}{\gamma}\right)\,da.
    \label{equation:optimal-driver-two-sided-bound}
\end{align}
Next, boundedness of $\psi$ gives $|\langle z-z',\psi(t,x,a)\rangle|
    \leq
    \|\psi\|_\infty|z-z'|$. Hence
\begin{align*}
    &\exp\left(
        -\frac{\|\psi\|_\infty|z-z'|}{\gamma}
    \right)
    \int_A
    \exp\left(
        \frac{
            \langle z',\psi(t,x,a)\rangle+r(t,x,a)
        }{\gamma}
    \right)\,da
    \\
    &\qquad\leq
    \int_A
    \exp\left(
        \frac{
            \langle z,\psi(t,x,a)\rangle+r(t,x,a)
        }{\gamma}
    \right)\,da
    \\
    &\qquad\leq
    \exp\left(
        \frac{\|\psi\|_\infty|z-z'|}{\gamma}
    \right)
    \int_A
    \exp\left(
        \frac{
            \langle z',\psi(t,x,a)\rangle+r(t,x,a)
        }{\gamma}
    \right)\,da.
\end{align*}
Taking logarithms and using the affine dependence of $f_*$ on $y$
gives
\[
    |f_*(t,x,y,z)-f_*(t,x,y',z')|
    \leq
    \beta|y-y'|
    +
    \|\psi\|_\infty|z-z'|,
\]
which proves \eqref{equation:optimal-driver-lipschitz}. Taking $z=0$ in
\eqref{equation:optimal-driver-two-sided-bound} and using \eqref{equation:separable-reward-growth} yields \eqref{equation:optimal-driver-growth}, possibly after enlarging
$C$ and $m$. Finally, let $\nu\in\Pi$. Since $\nu(a\,|\, t,x)>0$, Jensen's
inequality gives
\begin{align*}
    &\log
    \int_A
    \exp\left(
        \frac{
            \langle z,\psi(t,x,a)\rangle+r(t,x,a)
        }{\gamma}
    \right)\,da
    =
    \log
    \int_A
    \frac{
        \exp\left(
            \frac{
                \langle z,\psi(t,x,a)\rangle+r(t,x,a)
            }{\gamma}
        \right)
    }{
        \nu(a\,|\, t,x)
    }
    \nu(a\,|\, t,x)\,da
    \\
    &\geq
    \int_A
    \left[
        \frac{
            \langle z,\psi(t,x,a)\rangle+r(t,x,a)
        }{\gamma}
        -\log\nu(a\,|\, t,x)
    \right]
    \nu(a\,|\, t,x)\,da.
\end{align*}
Multiplying by $\gamma$ and subtracting $\beta y$ yields
\[
    f_\nu(t,x,y,z)
    \leq
    f_*(t,x,y,z),
\]
which proves \eqref{equation:driver-domination}.
\end{proof}

\noindent We now consider the BSDE associated with the driver $f_*$. On an arbitrary Brownian probability space $(\mathbb Q,B,\mathbb F^B)$ supporting the reference diffusion $X^{t,x}$, let
\begin{equation}\label{equation:optimal-bsde}
    Y_s^{t,x,*}=g(X_T^{t,x})+\int_s^T f_*
    \bigl(v,X_v^{t,x},Y_v^{t,x,*},Z_v^{t,x,*}
    \bigr)\,dv-\int_s^T\langle
    Z_v^{t,x,*},dB_v\rangle,\qquad s\in[t,T].
\end{equation}

\begin{proposition}
\label{proposition:optimal-bsde}
For every $(t,x)\in[0,T]\times\mathbb R^d$ and every $p>1$, equation~\eqref{equation:optimal-bsde} admits a unique solution $(Y^{t,x,*},Z^{t,x,*})\in\mathcal S^p_{t,T}
    (\mathbb R;\mathbb Q,\mathbb F^B)\times\mathcal H^p_{t,T}(\mathbb R^{n};\mathbb Q,\mathbb F^B)$. Moreover, there exist constants $C_p>0$ and $m_p\geq1$ such that
\[
    \mathbb E^{\mathbb Q}\left[\sup_{t\leq s\leq T}|Y_s^{t,x,*}|^p +\left(\int_t^T|Z_s^{t,x,*}|^2\,ds\right)^{p/2}
    \right]\leq C_p(1+|x|^{m_p}).
\]
There exist a Borel measurable function $u_*:[0,T]\times\mathbb R^d\to\mathbb R$ of polynomial growth, continuous on $[0,T)\times\mathbb R^d$, and a Borel function $z_*:[0,T]\times\mathbb R^d\to\mathbb R^{n}$ such that, for every initial condition,
\begin{align}
    Y_s^{t,x,*}&= u_*(s,X_s^{t,x}),\qquad\mathbb Q\text{-a.s.},\;
    s\in[t,T],
    \label{equation:optimal-value-representation}\\
    Z_s^{t,x,*}&= z_*(s,X_s^{t,x}),\qquad
    ds\otimes d\mathbb Q\text{-a.e.}
    \label{equation:optimal-integrand-representation}
\end{align}
In particular, $u_*(T,x)=g(x)$. The function $z_*$ may be chosen independently of the Brownian probability space and is unique up to
$dt\otimes dx$-almost everywhere equality.
If $g$ is continuous, then $u_*$ is continuous on $[0,T]\times\mathbb R^d$.
\end{proposition}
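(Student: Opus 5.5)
The plan is to treat the optimal BSDE exactly as the policy BSDE in Proposition~\ref{proposition:bsde-value}, with continuity obtained by reduction to the linear case of Proposition~\ref{proposition:continuity}.

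\emph{Well-posedness and estimates.} By Proposition~\ref{proposition:optimal-driver}, $f_*$ is Borel measurable and uniformly Lipschitz in $(y,z)$ with constants $\beta$ and $\|\psi\|_\infty$. Since it is affine in $y$ with slope $-\beta$, it satisfies the monotonicity condition. Moreover, $|f_*(t,x,0,0)|\le C(1+|x|^m)$. Hence the process $F^*_s(y,z)\coloneqq f_*(s,X^{t,x}_s,y,z)$ is $\mathbb F^B$-progressively measurable and satisfies (H1)--(H5) of \cite[Theorem~4.2]{BriandDelyonHuPardouxStoica2003}, exactly as in Step~1 of Proposition~\ref{proposition:bsde-value}. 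This gives existence and uniqueness in $\mathcal S^p\times\mathcal H^p$ for every $p>1$. The a priori estimate \cite[Proposition~3.2]{BriandDelyonHuPardouxStoica2003}, combined with the moments of $X^{t,x}$, then yields the stated bound. Because these constants now come from $f_*$ and not from a policy, they are independent of any $\pi$.

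\emph{Markovian representatives.} I apply \cite[Theorem~4.1]{ElKarouiPengQuenez1997} to the Markovian BSDE with Borel driver $f_*$. This gives Borel functions $u_*$ and $z_*$ with \eqref{equation:optimal-value-representation} and \eqref{equation:optimal-integrand-representation}, where $u_*(t,x)=Y_t^{t,x,*}$ is deterministic. Since the law of $(X^{t,x},B)$ does not depend on the chosen Brownian space, and the solution is a measurable functional of it, $u_*$ and $z_*$ do not depend on the space either. Polynomial growth of $u_*$ follows from the moment estimate at $s=t$, and $u_*(T,\cdot)=g$ is immediate. For uniqueness of $z_*$ up to $dt\otimes dx$-null sets, I argue as in Proposition~\ref{proposition:markovian-integrand}, using the positive densities of Proposition~\ref{proposition:densities}.

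\emph{Continuity (the main point).} Since $f_*$ is nonlinear in $z$, Proposition~\ref{proposition:continuity} does not apply directly, so I would linearize along the solution. Define
\[
\zeta(s,y)\coloneqq\frac{f_*(s,y,0,z_*(s,y))-f_*(s,y,0,0)}{|z_*(s,y)|^2}\,z_*(s,y)
\]
when $z_*(s,y)\neq0$, and $\zeta(s,y)\coloneqq0$ otherwise. Also set $h(s,y)\coloneqq f_*(s,y,0,0)$. By the Lipschitz bound, $|\zeta|\le\|\psi\|_\infty$, so $\zeta$ is bounded and Borel; and $h$ is Borel with polynomial growth. With these choices,
\[
f_*(s,X_s,Y_s,Z_s)=\langle Z_s,\zeta(s,X_s)\rangle+h(s,X_s)-\beta Y_s
\]
along the solution, using \eqref{equation:optimal-integrand-representation}. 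Therefore $(Y^{t,x,*},Z^{t,x,*})$ solves the linear BSDE \eqref{equation:linear-bsde} with this fixed pair $(\zeta,h)$. The key observation is that $\zeta$ and $h$ are fixed deterministic functions, independent of $(t,x)$, which is what lets the linear result apply uniformly. By uniqueness, $u_*$ coincides with the function $w$ of Proposition~\ref{proposition:continuity}. That proposition then gives continuity of $u_*$ on $[0,T)\times\mathbb R^d$, and on $[0,T]\times\mathbb R^d$ when $g$ is continuous.

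The step requiring care is checking that the identity $Z=z_*(s,X_s)$ holds $ds\otimes d\mathbb Q$-a.e. for every initial condition simultaneously, so that a single $\zeta$ serves all $(t,x)$. This is exactly what \eqref{equation:optimal-integrand-representation} provides.
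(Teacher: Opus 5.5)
Your proposal is correct and follows the paper's proof essentially step by step. Well-posedness and the estimates come from \cite[Proposition~3.2 and Theorem~4.2]{BriandDelyonHuPardouxStoica2003}, the Markovian integrand $z_*$ comes from \cite[Theorem~4.1]{ElKarouiPengQuenez1997}, and continuity is obtained by linearizing $f_*$ along $z_*$ with exactly the same bounded $\zeta_*$ and $h_*=f_*(\cdot,\cdot,0,0)$, so that Proposition~\ref{proposition:continuity} applies. The only cosmetic difference is that you take $u_*$ from the El Karoui--Peng--Quenez Markov property and identify it with $w$ by uniqueness, whereas the paper lets Proposition~\ref{proposition:continuity} supply $u_*$ and its representation directly.
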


\begin{proof}
By Proposition~\ref{proposition:optimal-driver}, the driver $f_*$ is uniformly Lipschitz in $(y,z)$ and $f_*(\cdot,X^{t,x}_\cdot,0,0)$ has moments of every order, while $g(X_T^{t,x})\in L^p(\mathbb Q)$ for every $p>1$. Hence the same argument as in Proposition~\ref{proposition:bsde-value}, based on \cite[Proposition~3.2 and Theorem~4.2]{BriandDelyonHuPardouxStoica2003}, yields existence, uniqueness, and the stated $L^p$ estimates for $(Y^{t,x,*},Z^{t,x,*})$. Applying \cite[Theorem~4.1]{ElKarouiPengQuenez1997} to the Markovian BSDE \eqref{equation:optimal-bsde}, exactly as in the proof of Proposition~\ref{proposition:markovian-integrand}, yields a Borel function $z_*$ such that \eqref{equation:optimal-integrand-representation} holds. The arguments used in that proof also show that $z_*$ may be chosen independently of the Brownian probability space and is unique up to $dt\otimes dx$-almost everywhere equality.

\noindent Set $h_*(t,x)\coloneqq f_*(t,x,0,0)$ and
\[
    \zeta_*(t,x)\coloneqq
    \begin{cases}
    \displaystyle
    \frac{f_*(t,x,0,z_*(t,x))-f_*(t,x,0,0)}
         {|z_*(t,x)|^2}\,z_*(t,x),
        & z_*(t,x)\neq0,\\[1ex]
    0, & z_*(t,x)=0.
    \end{cases}
\]
By Proposition~\ref{proposition:optimal-driver}, $h_*$
is Borel measurable with polynomial growth, while $\zeta_*$ is Borel measurable and satisfies
$|\zeta_*(t,x)|\le\|\psi\|_\infty$. Moreover,
\[
    f_*(t,x,y,z_*(t,x))
    =
    \langle z_*(t,x),\zeta_*(t,x)\rangle
    +h_*(t,x)-\beta y.
\]
Consequently, by \eqref{equation:optimal-integrand-representation}, $(Y^{t,x,*},Z^{t,x,*})$ solves the linear BSDE \eqref{equation:linear-bsde} with
$(\zeta,h)=(\zeta_*,h_*)$ for every initial condition.
Proposition~\ref{proposition:continuity} therefore shows that
$u_*(t,x)\coloneqq Y_t^{t,x,*}$ is deterministic,
Borel measurable, and of polynomial growth, satisfies
\eqref{equation:optimal-value-representation} and
$u_*(T,\cdot)=g$, and is continuous on
$[0,T)\times\mathbb R^d$.
If $g$ is continuous, then $u_*$ is continuous on
$[0,T]\times\mathbb R^d$.
\end{proof}

\subsection{The optimal \texorpdfstring{$q$}{q}-function and verification}

Fix a Borel representative of $z_*$. We define
\begin{equation}
\label{equation:optimal-q-definition}
    q_*(t,x,a)\coloneqq\langle z_*(t,x),\psi(t,x,a)\rangle
    +
    r(t,x,a)
    -
    f_*(t,x,0,z_*(t,x)).
\end{equation}
The function $q_*$ is Borel measurable. A different Borel representative of $z_*$ gives the same $q_*$ outside a common $dt\otimes dx$-null set.

\medskip\noindent The definition immediately yields the exponential normalization
\begin{equation}\label{equation:optimal-q-normalization}
    \int_A\exp\left(\frac{q_*(t,x,a)}{\gamma}
    \right)\,da=1,\qquad (t,x)\in[0,T]\times\mathbb R^d.
\end{equation}
Indeed, by \eqref{equation:optimal-q-definition} and
\eqref{equation:optimal-driver},
\[
\begin{aligned}
    \exp\left(
        \frac{q_*(t,x,a)}{\gamma}
    \right)
    &=
    \frac{
        \displaystyle
        \exp\left(
            \frac{
                \langle z_*(t,x),\psi(t,x,a)\rangle
                +r(t,x,a)
            }{\gamma}
        \right)
    }{
        \displaystyle
        \int_A
        \exp\left(
            \frac{
                \langle z_*(t,x),\psi(t,x,a')\rangle
                +r(t,x,a')
            }{\gamma}
        \right)\,da'
    },
\end{aligned}
\]
which, integrated with respect to $a$, gives
\eqref{equation:optimal-q-normalization}.

\medskip\noindent Accordingly, define
\begin{equation}\label{equation:optimal-policy}
    \pi^*(a\,|\, t,x)
    \coloneqq
    \exp\left(
        \frac{q_*(t,x,a)}{\gamma}
    \right).
\end{equation}

\begin{remark}
\label{remark:optimal-gibbs-policy}
The definition of $\pi^*$ is consistent with the Gibbs update
\eqref{equation:gibbs-update} introduced in
Section~\ref{section:policy-improvement}. In particular, since the
optimal $q$-function is already exponentially normalized by
\eqref{equation:optimal-q-normalization}, $\pi^*$ is precisely the
Gibbs density generated by $q_*$. Moreover, the additional
assumptions of the present section imply the integrability condition
\eqref{equation:gibbs-integrability} used in
Section~\ref{section:policy-improvement}. However,
Assumption~\ref{assumption:separable-reward} alone does not ensure
that $\pi^*$ satisfies all the admissibility requirements of
Definition~\ref{definition:policy}. We therefore explicitly assume
that $\pi^*$ admits a representative in $\Pi$, still denoted by $\pi^*$, whenever admissibility is needed.
\end{remark}

\begin{proposition}
\label{proposition:optimal-verification}
For every $(t,x)\in[0,T]\times\mathbb R^d$,
\begin{equation}\label{equation:optimal-value-bound}
    -\infty< J^*(t,x)\leq u_*(t,x)<+\infty.
\end{equation}
Suppose, in addition, that the density in
\eqref{equation:optimal-policy} admits a representative in $\Pi$, still denoted by $\pi^*$, agreeing with
\eqref{equation:optimal-policy} $dt\otimes dx\otimes da$-almost everywhere. Then $\pi^*$ is
optimal within $\Pi$, and
\[
    u_*=J^{\pi^*}=J^*\qquad\text{everywhere},\qquad q_*=q^{\pi^*}
    \qquad
    dt\otimes dx\otimes da\text{-a.e.}
\]
\end{proposition}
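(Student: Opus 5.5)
The upper bound comes from BSDE comparison, and the attainment statement from identifying $(u_*,z_*)$ with the solution of the policy BSDE for $\pi^*$.

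\emph{Bound.} Fix $(t,x)$ and realize the reference diffusion $X^{t,x}$ on a Brownian space $(\mathbb Q,B)$. For any $\nu\in\Pi$, Proposition~\ref{proposition:bsde-value} gives $(Y^{t,x,\nu},Z^{t,x,\nu})$ with $Y_t^{t,x,\nu}=J^\nu(t,x)$. Proposition~\ref{proposition:optimal-bsde} gives $(Y^{t,x,*},Z^{t,x,*})$ with $Y_t^{t,x,*}=u_*(t,x)$. Both BSDEs have terminal value $g(X_T^{t,x})$, and both drivers are Lipschitz in $(y,z)$. By \eqref{equation:driver-domination},
\[
f_\nu(s,X_s^{t,x},Y_s^{t,x,\nu},Z_s^{t,x,\nu})\le f_*(s,X_s^{t,x},Y_s^{t,x,\nu},Z_s^{t,x,\nu})
\]
pointwise. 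The comparison theorem for scalar BSDEs \cite[Theorem~6.2.2]{Pham2009} then yields $J^\nu(t,x)\le u_*(t,x)$. Taking the supremum over $\nu$ gives $J^*\le u_*$, and $u_*(t,x)<\infty$ because $u_*$ is finite. Finally, $J^*(t,x)>-\infty$ because $\Pi$ is nonempty (the Gaussian-type density belongs to $\Pi$) and every $J^\nu$ is finite.

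\emph{Attainment.} Now let $\pi^*\in\Pi$ agree with \eqref{equation:optimal-policy} almost everywhere. Fix $(s,y)$ outside a $dt\otimes dx$-null set, such that $\pi^*(\cdot\,|\,s,y)=\exp(q_*(s,y,\cdot)/\gamma)$ a.e. on $A$, and put $z=z_*(s,y)$. Then $\log\pi^*=q_*/\gamma$, and substituting \eqref{equation:optimal-q-definition} gives
\[
\langle z,\bar\psi_{\pi^*}\rangle+R_{\pi^*}=\int_A\bigl[\langle z,\psi\rangle+r-q_*\bigr]\pi^*\,da=f_*(s,y,0,z)+\beta\cdot0 .
\]
Hence $f_{\pi^*}(s,y,\xi,z_*(s,y))=f_*(s,y,\xi,z_*(s,y))$ for all $\xi$ and a.e. $(s,y)$; this is the equality case of Jensen. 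All integrals here converge absolutely by admissibility of $\pi^*$.

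By Proposition~\ref{proposition:densities} this equality holds along $X^{t,x}$, $ds\otimes d\mathbb Q$-a.e. By \eqref{equation:optimal-integrand-representation}, $Z^{t,x,*}_s=z_*(s,X^{t,x}_s)$, so $(Y^{t,x,*},Z^{t,x,*})$ solves the policy BSDE \eqref{equation:policy-bsde} for $\pi^*$. It lies in $\mathcal S^p\times\mathcal H^p$, so uniqueness in Proposition~\ref{proposition:bsde-value} gives $Y^{t,x,*}=Y^{t,x,\pi^*}$ and $Z^{t,x,*}=Z^{t,x,\pi^*}$. Therefore $u_*(t,x)=J^{\pi^*}(t,x)$ for every $(t,x)$. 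Combined with the bound, $u_*=J^{\pi^*}\le J^*\le u_*$ everywhere, so $\pi^*$ is optimal.

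\emph{Identification of $q_*$.} The equality of integrands gives $z_{\pi^*}=z_*$ a.e., via the uniqueness in Proposition~\ref{proposition:markovian-integrand} with $t=0$ and Proposition~\ref{proposition:densities}. Substituting into \eqref{equation:q-definition} yields
\[
q^{\pi^*}=\langle z_*,\psi\rangle+r-\bigl(\langle z_*,\bar\psi_{\pi^*}\rangle+R_{\pi^*}\bigr).
\]
By the driver identity above, the bracket equals $f_*(\cdot,0,z_*)$, so $q^{\pi^*}=q_*$ a.e.

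The main obstacle is bookkeeping of null sets. The driver identity holds only for a.e. $(s,y)$, and it must be transferred to the trajectories via Proposition~\ref{proposition:densities}. One must also check that $\pi^*$ depends only on a representative of $z_*$, so that the a.e. choices are consistent.
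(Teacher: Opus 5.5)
Your proposal is correct and follows the paper's proof essentially step by step. The bound comes from the comparison theorem combined with the driver domination $f_\nu\le f_*$. Attainment comes from the a.e.\ identity $f_{\pi^*}(\cdot,\xi,z_*)=f_*(\cdot,\xi,z_*)$, transferred along the reference diffusion, which identifies $(Y^{t,x,*},Z^{t,x,*})$ with the solution of the $\pi^*$-policy BSDE. Finally, uniqueness of the Markovian integrand gives $z_{\pi^*}=z_*$ a.e., and hence $q^{\pi^*}=q_*$ a.e.
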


\begin{proof}
Fix $\nu\in\Pi$. By
\eqref{equation:driver-domination},
\[
    f_\nu(t,x,y,z)
    \leq
    f_*(t,x,y,z)
\]
for all $(t,x,y,z)$. The BSDEs associated with $\nu$ and with $f_*$ have the same terminal condition $g(X_T^{t,x})$. The comparison theorem for scalar BSDEs (see \cite[Theorem~6.2.2]{Pham2009}) therefore gives
\[
    J^\nu(t,x)
    =
    Y_t^{t,x,\nu}
    \leq
    Y_t^{t,x,*}
    =
    u_*(t,x).
\]
Taking the supremum over $\nu\in\Pi$ yields
\[
    J^*(t,x)\leq u_*(t,x).
\]
Since $\Pi$ is nonempty and every policy value is finite, $J^*(t,x)>-\infty$, while finiteness of $u_*$ follows from Proposition~\ref{proposition:optimal-bsde}. This proves \eqref{equation:optimal-value-bound}.

\noindent Suppose now that $\pi^*\in\Pi$. By
\eqref{equation:optimal-policy},
\[
    \gamma\log\pi^*(a\,|\, t,x)
    =
    q_*(t,x,a)
\]
for $dt\otimes dx\otimes da$-almost every $(t,x,a)$.
Using \eqref{equation:optimal-q-definition}, we then obtain, for
every $y\in\mathbb R$ and for $dt\otimes dx$-almost every $(t,x)$,
\begin{align*}
    f_{\pi^*}(t,x,y,z_*(t,x))
    &=
    \int_A
    \Big[
        \langle z_*(t,x),\psi(t,x,a)\rangle
        +r(t,x,a)
        -\gamma\log\pi^*(a\,|\, t,x)
    \Big]
    \pi^*(a\,|\, t,x)\,da
    -\beta y
    \\
    &=
    f_*(t,x,0,z_*(t,x))
    \int_A\pi^*(a\,|\, t,x)\,da
    -\beta y
    =
    f_*(t,x,y,z_*(t,x)).
\end{align*}
Hence
\begin{equation}
\label{equation:optimal-driver-attainment}
    f_{\pi^*}(t,x,y,z_*(t,x))
    =
    f_*(t,x,y,z_*(t,x)).
\end{equation}
By Proposition~\ref{proposition:densities},
\eqref{equation:optimal-driver-attainment} remains valid when evaluated along the reference diffusion. Hence $(Y^{t,x,*},Z^{t,x,*})$ solves the policy BSDE associated with $\pi^*$. Uniqueness gives
\[
    u_*(t,x)=J^{\pi^*}(t,x),\qquad\text{for every }(t,x).
\]
Therefore,
\[
    J^*(t,x)
    \leq
    u_*(t,x)
    =
    J^{\pi^*}(t,x)
    \leq
    J^*(t,x),
\]
and consequently $u_*=J^{\pi^*}=J^*$. Thus $\pi^*$ is optimal within $\Pi$.

\noindent Uniqueness of the martingale integrand in the two BSDEs gives
\[
    z_*=z_{\pi^*}
    \qquad
    dt\otimes dx\text{-a.e.}
\]
Substituting this identity and $\gamma\log\pi^*=q_*$ into \eqref{equation:q-definition}, or equivalently comparing \eqref{equation:optimal-q-definition} with the policy $q$-function, yields
\[
    q_*=q^{\pi^*}
    \qquad
    dt\otimes dx\otimes da\text{-a.e.}
\]
\end{proof}

\subsection{Martingale characterization}

We finally characterize the pair $(u_*,q_*)$ without prescribing a target policy. The policy-dependent normalization used in Section~\ref{section:policy-evaluation} is replaced here by the exponential normalization \eqref{equation:optimal-q-normalization}. The result corresponds to \cite[Theorem~9]{JiaZhou2025Erratum}. As in Theorem~\ref{theorem:joint-martingale-characterization}, the candidate $u$ is only required to be continuous on $[0,T)\times\mathbb R^d$, and $u_*$ has this property by Proposition~\ref{proposition:optimal-bsde}.

\begin{theorem}
\label{theorem:optimal-martingale-characterization}
Fix a behavior policy $\mu\in\Pi$. Let $u:[0,T]\times\mathbb R^d\to\mathbb R$ be Borel measurable with polynomial growth and continuous on $[0,T)\times\mathbb R^d$, and let $q:[0,T]\times\mathbb R^d\times A\to\mathbb R$ be Borel measurable and suppose that
\eqref{equation:candidate-integrability} holds for almost every initial condition $(t,x)$ and every sampling grid $G$. Suppose that
\begin{align}
    u(T,x)=g(x),&\qquad x\in\mathbb R^d,\label{equation:optimal-candidate-terminal}\\
    \int_A
    \exp\left(
        \frac{q(t,x,a)}{\gamma}
    \right)\,da
    =
    1,&\qquad\text{for $dt\otimes dx$-a.e.  $(t,x)$.}\label{equation:optimal-candidate-normalization}
\end{align}
Then the following statements are equivalent:
\begin{enumerate}[(i)]
    \item $u=u_*$ everywhere and $q=q_*$ $dt\otimes dx\otimes da$-a.e.;
    \item for almost every initial condition $(t,x)$ and every sampling grid $G$, the process $M^{\mu,G}[u,q]$ is an $(\mathbb F^G,\mathbb P)$-martingale.
\end{enumerate}
Moreover, if these equivalent conditions hold, then
$M^{\nu,G}[u,q]$ is an $(\mathbb F^G,\mathbb P)$-martingale for every behavior policy $\nu\in\Pi$, every initial condition, and every sampling grid.

\noindent If, in addition, $\exp\left(\frac{q(t,x,a)}{\gamma}
\right)$ admits an admissible representative in $\Pi$, then this representative is an optimal policy and
\[
    u=J^*.
\]
\end{theorem}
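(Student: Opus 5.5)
The plan is to follow the proof of Theorem~\ref{theorem:joint-martingale-characterization} closely. The only change is that the policy normalization \eqref{equation:joint-normalization} is replaced by the exponential one \eqref{equation:optimal-candidate-normalization}, and the linear driver $f_\pi$ by $f_*$.

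For (i) $\Rightarrow$ (ii), fix any $(t,x)$, any $\nu\in\Pi$ and any grid $G$, and work under $\mathbb Q^{\nu,G}$, where $X$ solves the reference equation driven by $B^{\nu,G}$. Proposition~\ref{proposition:optimal-bsde} gives $Y^{t,x,*}_s=u_*(s,X_s)$ and $Z^{t,x,*}_s=z_*(s,X_s)$. Substituting $dB^{\nu,G}_s=dW_s+\psi(s,X_s,\alpha_s)\,ds$ into the dynamics \eqref{equation:optimal-bsde} and using \eqref{equation:optimal-q-definition} yields
\[
d\bigl(e^{-\beta(s-t)}u_*(s,X_s)\bigr)=e^{-\beta(s-t)}\bigl[q_*(s,X_s,\alpha_s)-r(s,X_s,\alpha_s)\bigr]ds+e^{-\beta(s-t)}\langle z_*(s,X_s),dW_s\rangle .
\]
This is the exact analogue of \eqref{equation:discounted-value-dynamics}; the $\beta y$ terms cancel since $f_*$ is affine in $y$. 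Since $q=q_*$ a.e., Proposition~\ref{proposition:densities} makes the drift of $M^{\nu,G}[u,q]$ vanish. The stochastic integral is a true martingale because $z_*(\cdot,X_\cdot)\in\mathcal H^2$ under $\mathbb P$, by the moment transfer \eqref{equation:density-moments}. The same bound together with $|q_*|\le 2\|\psi\|_\infty|z_*|+|r|+|f_*(\cdot,\cdot,0,0)|$ shows that $q_*$ satisfies \eqref{equation:candidate-integrability}.

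For (ii) $\Rightarrow$ (i), fix a good $(t,x)$ and $G$ and set $Y_s=u(s,X_s)$. I would reproduce verbatim the first half of the proof of Theorem~\ref{theorem:joint-martingale-characterization}: interval-wise Brownian martingale representation, concatenation using continuity of $M$, Stricker's theorem to obtain $\mathbb F^B$-versions, and the disintegration over $\alpha_s$. This yields \eqref{equation:joint-reconstructed-q}:
\[
q(s,X_s,a)=r(s,X_s,a)+\langle Z_s,\psi(s,X_s,a)\rangle-S_s-\beta Y_s,\qquad ds\otimes d\mathbb Q\otimes da\text{-a.e.}
\]
Now exponentiate, integrate over $a\in A$, and use \eqref{equation:optimal-candidate-normalization}, which transfers along $X$ by Proposition~\ref{proposition:densities}. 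This gives
\[
1=e^{-(S_s+\beta Y_s)/\gamma}\int_A \exp\bigl((\langle Z_s,\psi\rangle+r)/\gamma\bigr)\,da,
\]
that is, $S_s=f_*(s,X_s,Y_s,Z_s)$. Hence $(Y,Z)$ solves \eqref{equation:optimal-bsde} with terminal value $g(X_T)$. As before, $Y\in\mathcal S^2$ by polynomial growth, and $Z\in\mathcal H^2$ by \cite[Lemma~3.1]{BriandDelyonHuPardouxStoica2003}, which applies because $f_*$ is Lipschitz with polynomially bounded $f_*(\cdot,0,0)$ (Proposition~\ref{proposition:optimal-driver}). Uniqueness in Proposition~\ref{proposition:optimal-bsde} then gives $u(t,x)=u_*(t,x)$ for a.e.\ $(t,x)$ and $Z=z_*(\cdot,X_\cdot)$. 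Continuity of both $u$ and $u_*$ on $[0,T)\times\mathbb R^d$ and the common terminal value $g$ upgrade this to equality everywhere.

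To identify $q$, plug $u=u_*$ into the decomposition above. As in the (ii) $\Rightarrow$ (i) step of Theorem~\ref{theorem:q-martingale-characterization}, the difference $\int e^{-\beta(v-t)}(q_*-q)(v,X_v,\alpha_v)\,dv$ is a continuous finite-variation martingale, hence zero. So $q=q_*$ along trajectories, and Proposition~\ref{proposition:densities} plus the Fubini/exhaustion argument over initial times gives $q=q_*$ a.e.

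The final claim follows from Proposition~\ref{proposition:optimal-verification}. Since $q=q_*$ a.e., $\exp(q/\gamma)$ is a representative of $\pi^*$, so an admissible representative makes $\pi^*$ optimal with $u=u_*=J^*$.

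The main obstacle is the semimartingale reconstruction with $\mathbb F^B$-measurable versions of $S$ and $Z$ in the enlarged filtration. It can, however, be quoted from the proof of Theorem~\ref{theorem:joint-martingale-characterization}, since it did not use linearity of the driver. The one genuinely new point is checking that the exponential normalization recovers exactly the log-partition driver. That requires the action integral to be finite, which Assumption~\ref{assumption:separable-reward} guarantees.
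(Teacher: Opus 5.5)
Your proposal is correct and follows the paper's proof essentially step by step. For (i)$\Rightarrow$(ii) you use the same drift cancellation for $M^{\nu,G}[u_*,q]$, obtained by realizing the optimal BSDE under $\mathbb Q^{\nu,G}$; for (ii)$\Rightarrow$(i) you take the reconstruction from Theorem~\ref{theorem:joint-martingale-characterization}, exponentiate against \eqref{equation:optimal-candidate-normalization} to get $S=f_*$, use BSDE uniqueness plus continuity to identify $u$, the finite-variation martingale argument to identify $q$, and Proposition~\ref{proposition:optimal-verification} for the final claim, and your explicit check that $q_*$ satisfies \eqref{equation:candidate-integrability} under every behavior policy is a detail the paper handles the same way, by analogy with $q^\pi$.
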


\begin{proof}
\emph{\textnormal{(i)} $\Rightarrow$ \textnormal{(ii)}.}
Fix $(t,x,\mu,G)$. Realizing the optimal BSDE
\eqref{equation:optimal-bsde} on
$(\mathbb Q^{\mu,G},B^{\mu,G})$ and using
\[
    dB_s^{\mu,G}
    =
    dW_s+\psi(s,X_s,\alpha_s)\,ds,
\]
we obtain, exactly as in the proof of
Theorem~\ref{theorem:q-martingale-characterization},
\begin{align}
    M_s^{\mu,G}[u_*,q]
    &=
    u_*(t,x)
    +
    \int_t^s
    e^{-\beta(v-t)}
    \langle z_*(v,X_v),dW_v\rangle
    \notag\\
    &\quad+
    \int_t^s
    e^{-\beta(v-t)}
    \bigl[
        q_*(v,X_v,\alpha_v)
        -q(v,X_v,\alpha_v)
    \bigr]\,dv,
    \label{equation:optimal-martingale-decomposition}
\end{align}
where the stochastic integral is a square-integrable
$(\mathbb F^G,\mathbb P)$-martingale.

\noindent If $q=q_*$ $dt\otimes dx\otimes da$-almost everywhere, Proposition~\ref{proposition:densities} gives
\[
    q(s,X_s,\alpha_s)
    =
    q_*(s,X_s,\alpha_s)
    \qquad
    ds\otimes d\mathbb P\text{-a.e.}
\]
Hence the finite-variation term in
\eqref{equation:optimal-martingale-decomposition} vanishes and $M^{\mu,G}[u_*,q]$ is a martingale. The same argument applies to every admissible behavior policy $\nu\in\Pi$, proving the additional
martingality statement.

\medskip\noindent\emph{\textnormal{(ii)} $\Rightarrow$ \textnormal{(i)}.}
Fix an initial condition $(t,x)$ and a sampling grid $G$ for which the assumptions hold, and set
\[
    \mathbb Q\coloneqq\mathbb Q^{\mu,G},
    \qquad
    B\coloneqq B^{\mu,G},
    \qquad
    Y_s\coloneqq u(s,X_s).
\]
Arguing as in the proof of Theorem~\ref{theorem:joint-martingale-characterization}, the martingale property of $M^{\mu,G}[u,q]$ yields $\mathbb F^B$-progressively measurable processes $S$ and $Z$ such that
\[
    dY_s=-S_s\,ds+\langle Z_s,dB_s\rangle
\]
and
\begin{equation}
\label{equation:optimal-reconstruction-q}
    q(s,X_s,a)
    =
    r(s,X_s,a)
    +
    \langle Z_s,\psi(s,X_s,a)\rangle
    -
    S_s
    -
    \beta Y_s,\qquad\text{$ds\otimes d\mathbb Q\otimes da$-a.e.}
\end{equation}
By Proposition~\ref{proposition:densities}, \eqref{equation:optimal-candidate-normalization} can be evaluated
along $X$. Exponentiating
\eqref{equation:optimal-reconstruction-q}, integrating over $A$, and using \eqref{equation:optimal-candidate-normalization} gives
\[
    1
    =
    \exp\left(
        -\frac{S_s+\beta Y_s}{\gamma}
    \right)
    \int_A
    \exp\left(
        \frac{
            \langle Z_s,\psi(s,X_s,a)\rangle
            +r(s,X_s,a)
        }{\gamma}
    \right)\,da.
\]
Hence, by \eqref{equation:optimal-driver},
\[
    S_s=f_*(s,X_s,Y_s,Z_s),
    \qquad
    ds\otimes d\mathbb Q\text{-a.e.}
\]
Together with $Y_T=u(T,X_T)=g(X_T)$, this shows that $(Y,Z)$ satisfies the optimal BSDE \eqref{equation:optimal-bsde}. As in the proof of
Theorem~\ref{theorem:joint-martingale-characterization}, \cite[Lemma~3.1]{BriandDelyonHuPardouxStoica2003} and uniqueness for \eqref{equation:optimal-bsde} imply that
\[
    Y_s=Y_s^{t,x,*},
    \quad
    s\in[t,T],
    \quad
    \mathbb Q\text{-a.s.},\qquad\qquad Z_s = Z^{t,x,*}_s,\quad ds\otimes d\mathbb Q\text{-a.e.} 
\]
In particular, $u(t,x)=u_*(t,x)$ for almost every initial condition. Hence $u=u_*$ $dt\otimes dx$-almost everywhere, and continuity on $[0,T)\times\mathbb R^d$, together with the terminal condition \eqref{equation:optimal-candidate-terminal}, yields
\[
    u=u_*
    \qquad\text{on }[0,T]\times\mathbb R^d.
\]
It remains to identify $q$. With $u=u_*$, the same
finite-variation and density argument as in
Theorem~\ref{theorem:q-martingale-characterization} yields
\[
    q=q_*
    \qquad
    dt\otimes dx\otimes da\text{-a.e.}
\]
This proves \textnormal{(i)}.

\medskip\noindent Finally, if $\exp(q/\gamma)$ admits an admissible representative,
then $q=q_*$ implies that this representative agrees almost
everywhere with \eqref{equation:optimal-policy}. The optimality
conclusion therefore follows from
Proposition~\ref{proposition:optimal-verification}.
\end{proof}

\paragraph{Acknowledgements.}
E.~Bandini, A.~Cosso, and L.~Perelli acknowledge support from GNAMPA-INdAM.

\small
\bibliographystyle{plainnat}
\bibliography{bibliography}

\end{document}